\titleformat{\section}{%
\normalfont\large\bfseries}{\thesection.}{1em}{}
\titleformat{\subsection}{%
\normalfont\normalsize\bfseries}{\thesubsection.}{1em}{}
\renewcommand{\phi}{\varphi}
\newcommand{\scalar}[2]{\langle#1,#2\rangle}
\DeclareMathOperator{\Int}{int}
\newcommand{\step}[1]{\par\medskip\noindent\it#1\rm}
\newcommand{\abs}[1]{\lvert#1\rvert}
\newcommand{\z}{\zeta}
\newcommand{\wh}{\widehat}
\renewcommand{\H}{\mathbb{H}}
\newcommand{\e}{\varepsilon}
\renewcommand{\r}{\varrho}
\newcommand{\s}{\sigma}
\newcommand{\la}{\lambda}
\newcommand{\ol}{\overline}
\renewcommand{\d}{\delta}
\newcommand{\Eucl}{\textup{Euc}}
\newcommand{\p}{\partial}
\newcommand{\R}{\mathbb{R}}
\newcommand{\C}{\mathbb{C}}
\newcommand{\G}{\mathbb{G}}
\newcommand{\N}{\mathbb{N}}
\newtheoremstyle{pippo}  % name of the style to be used
  {}       % measure of space to leave above the theorem. E.g.: 3pt
  {}       % measure of space to leave below the theorem. E.g.: 3pt
   {\sffamily}   % name of font to use in the body of the theorem
\newtheoremstyle{pluto}  {}{}
{\slshape}  {}{\bfseries}  {.} {1ex}    {}
 \newtheorem{theorem}{Theorem}[section]%\newtheorem{theorem}[equation]{Theorem}
\newtheorem{proposition}[theorem]{Proposition}%\newtheorem{proposition}[equation]{Proposition}
\newtheorem{lemma}[theorem]{Lemma}%\newtheorem{lemma}[equation]{Lemma} 
\theoremstyle{pluto} 
 \newtheorem{definition}[theorem]{Definition}%\newtheorem{definition}[equation]{Definition}
\newtheorem{remark}[theorem]{Remark}%\newtheorem{remark}[equation]{Remark}
\renewcommand{\d}{\delta}
\renewcommand{\t}{\tau}
\renewcommand{\a}{\alpha}
\renewcommand{\b}{\beta}
\DeclareMathOperator{\Span}{span}
\numberwithin{equation}{section}
\begin{document}

\title{On the inner cone property for convex sets in two-step Carnot groups, with applications to monotone sets
\thanks{2010 Mathematics Subject Classification. Primary 53C17; Secondary
 52A01.
 Key words and Phrases.     SubRiemannian distance. Carnot groups. Monotone sets.}
}
\author{Daniele Morbidelli
  \thanks{Dipartimento di Matematica, Universit\`a di Bologna (Italy)}}

\date{\today}

\maketitle

 \begin{abstract}\small
 In the setting of step two Carnot groups, we show a ``cone property'' for horizontally convex sets. Namely 
 we prove that, given a horizontally  convex set~$C$,  a pair of  points~$P\in \p C$ and $Q\in \Int (C)$, both 
 belonging to a horizontal line~$\ell$,
then an open truncated subRiemannian cone around~$\ell$   and with vertex at~$P$ is contained in~$C$.

We apply our result to the problem of classification of horizontally monotone sets in Carnot groups. We are able to show that monotone sets in the direct product~$\H\times\R$ of the Heisenberg group with the real line have hyperplanes as boundaries.
 \end{abstract}

% \tableofcontents

\section{Introduction and main results}  
The starting point of this paper is a result of Arena, Caruso and Monti\cite{ArenaCarusoMonti12}, where it is proved that in  subRiemannian two-step Carnot groups of M\'etivier type,  given a horizontally  convex set~$C$,  a pair of  points~$P\in \p C$ and $Q\in \Int (C)$, both belonging to a horizontal line~$\ell$,
then an open truncated subRiemannian cone around~$\ell$ and with vertex at~$P$ is contained in~$C$. Here we   prove that the same property holds in general two-step Carnot groups, without  assuming the restrictive M\'etivier condition.

We apply the construction above to the problem  of classification of \emph{precisely monotone} sets, in the sense of Cheeger and Kleiner \cite{CheegerKleiner10}. Recall that a  precisely monotone  set $E$ in a Carnot group~$\G$ is a set which is (horizontally) convex and  such that $\G\setminus E $ is convex too. 
In~\cite{CheegerKleiner10} it is shown that a precisely monotone set~$E$ in the three-dimensional Heisenberg group~$\H$ satisfies $\Pi\subset E\subset\ol \Pi$, for  a suitable  half-space~$\Pi$. The classification is nontrivial, see also~\cite{CheegerKleinerNaor11}.  

In this paper we show the same result in  $\H\times \R$, 
the direct product of~$\H$ with the Euclidean line.
Namely, we will prove that  the boundary of  a (precisely) monotone set in~$\H\times\R$ is a hyperplane. See Theorem~\ref{massimino}
 for the precise statement. 
 Although our result concerns the seemingly easy model~$\H\times\R$, the proof   requires   a certain amount of work.    Furthermore, our techniques, based 
 on the use of the cone property above, connected with the notion of \emph{intrinsic graph}, in the sense of 
 \cite{FSSC01}, or \cite{AmbrosioSerraCassanoVittone06}, could be useful to attack the analogous classification problem 
 for  general two-step Carnot groups. Let us mention that at the author's knowledge, such problem is at the 
 moment open.

To describe our setting, let $(z,t)\in Z\times T=\R^m\times\R^{\ell} =\G$ be equipped with  the group law
\begin{equation}\label{machegruppo} 
 (z,t)\cdot (\z,\t)=(z+\z, t+\t+\scalar{ z}{A \z})
 =(z+\z, t+\t+Q(z,\z))
\end{equation} 
where $A=(A^{ 1}, \dots, A^\ell)$  and   $A^\b\in\R^{m\times m}$ is a skew-symmetric matrix for all $\b=1,\dots, \ell$. In other words, $Q:Z\times Z\to T$ is bilinear and skew-symmetric.
  The left invariant horizontal vector fields
\begin{equation}
X_j:=\p_{z_j}+\sum_{ \b=1}^{\ell}\sum_{k=1}^{ m}a_{kj}^\b z_k\p_{t_\b},\qquad j=1,\dots, m,
\end{equation} 
define in a standard way a subRiemannian distance~$d$. 
 We denote by $B((x,t), r)$ the corresponding  ball of center $(z,t)$ and radius $r$. See~\cite{NagelSteinWainger} for the definition. 
The \emph{horizontal plane} at a point $(z,t)$ will be denoted by
$H_{(z,t)}=\Span\{X_j(z,t) :1\leq j\leq m\}$.   
Furthermore, for any  $u\in\R^m$, letting   $u\cdot X=\sum_{k\le m}u_jX_j$,   
 we see that the integral curve of     $u\cdot X$ at time~$1$ starting from~$(z,t)$ has the form~$
 e^{u\cdot X}(z,t)= (z+u, t+Q(z,u))
$.  

A \emph{horizontal line}, briefly, a \emph{line}, is a set of the form 
\begin{equation}\label{elline} 
\ell= \{ e^{s u\cdot Z}(z,t)=(z,t)\cdot (su,0)=(z+su, t+sQ(z,u)) : s \in\R\}                                                          \end{equation} 
where $u\in\R^m$ and $(z,t)\in\G.$
 \emph{Lines} in our setting are particular Euclidean lines. Note that the Euclidean line parametrized by the path $\gamma(s)=(as,bs,cs)$   is a horizontal line if and only if $c=0$. We always parametrize horizontal lines with constant speed.

Finally, 
we say that a set  $C\subset \G$ is \emph{horizontally convex}, or briefly, \emph{convex} if for all $P,Q$ (horizontally) aligned points contained in $C$, the whole line containing $P$ and $Q$ is contained in $C$. We say instead that~$E\subset\G$ is \emph{horizontally precisely monotone}, or briefly, \emph{monotone},  if both $E$ and $\G\setminus E$ are convex. It is well known that the class of horizontally convex sets includes strictly all  sets which are convex in the Euclidean sense.

   Convexity in Carnot groups has been introduced in  \cite{DanielliGarofaloNhieu03} and   
   \cite{LuManfrediStroffolini}. Further references on properties of convex sets are 
   \cite{MontiRickly05,CalogeroCarcanoPini07,CalogeroPini}. Many authors have studied regularity properties 
   of convex functions.
   Monotone sets  in the Heisenberg group appear and have a prominent role in the recent papers
    \cite{CheegerKleiner10,CheegerKleinerNaor11,NaorYoung,FasslerOrponenRigot}. Observe also that the class 
    of monotone sets is somewhat similar to the more restricted class of \emph{sets with constant horizontal 
    normal}, appearing in geometric measure theory in Carnot groups. See 
    \cite{FSSC01,FSSC03,BellettiniLedonne12}.

  Here is our first result.
\begin{theorem}
 \label{conogelato}Let $C\subset \G$ be a convex set in a two-step Carnot group with law~\eqref{machegruppo}. 
 Let    $P=(z, t )\in\overline{ C}$   and assume that there is $\xi\in Z=\R^m$ such that $Q:=(z , t )\cdot(\xi,0)\in \Int 
 (C)$. Then  there is   $\e>0$ such that 
 \begin{equation}\label{coppifausto} 
  \bigcup_{0<s\leq 1} B\Big((z' , t' )\cdot (s\xi,0),\e s\Big)\subset C\qquad\text{ for all $ (z',t')\in B\big((z , t ),\e\big)\cap \ol C$}.
 \end{equation} 
\end{theorem}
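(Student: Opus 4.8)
The plan is to reduce the cone statement to a statement about horizontal segments emanating from points near $P$, and then to use convexity to inflate those segments into a uniform cone. The key observation is that since $Q=(z,t)\cdot(\xi,0)$ lies in the interior of $C$, there is $r>0$ with $B(Q,r)\subset C$, and by continuity of the group operation and of $(z',t')\mapsto (z',t')\cdot(\xi,0)$ there is $\d>0$ such that $(z',t')\cdot(\xi,0)\in B(Q,r/2)\subset C$ for every $(z',t')\in B(P,\d)$. Thus from each such $(z',t')$ that also lies in $\ol C$ we have a whole horizontal segment $s\mapsto (z',t')\cdot(s\xi,0)$, $s\in[0,1]$, with its endpoint sitting comfortably inside a fixed Euclidean-size ball contained in $C$; I will call this the ``good segment'' from $(z',t')$.

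The first real step is to show that, for a single base point $(z',t')\in\ol C$, convexity forces an entire truncated cone along its good segment to lie in $C$. Concretely, fix $(z',t')\in B(P,\d)\cap\ol C$; every point $w$ in a small ball $B(Q,r/2)$ is joined to $(z',t')$ by a horizontal line (namely the line through $(z',t')$ in the direction determined by the $Z$-components, using the explicit formula $(z',t')\cdot(su,0)=(z'+su,t'+sQ(z',u))$ with $u$ chosen so that $z'+u$ equals the $Z$-component of $w$ — here one must check that the $T$-component matches, which is where the two-step structure and the freedom in $w$ come in). Since $(z',t')\in\ol C$ and $w\in\Int(C)$, every point strictly between them lies in $\Int(C)$ by a standard convexity argument (convex sets have the property that the segment from a closure point to an interior point lies, except possibly the endpoint, in the interior). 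Taking the union over $w\in B(Q,r/2)$ of these segments produces an open truncated cone with vertex $(z',t')$, axis the good segment, and some opening that is uniform in $(z',t')$ because $\d$ can be shrunk so that the cone aperture, measured in the subRiemannian ball scale $B(\,\cdot\,,\e s)$, is controlled from below.

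The second step is the uniformity in $(z',t')$, which gives the stated form \eqref{coppifausto}: one needs a single $\e>0$ that simultaneously serves as the radius in $B(P,\e)$ for the allowed base points and as the cone aperture $B((z',t')\cdot(s\xi,0),\e s)$. This is obtained by choosing $r,\d$ first, then noting that the family of cones constructed above, as $(z',t')$ ranges over the compact set $\ol{B(P,\d/2)}\cap\ol C$, has apertures bounded below; a compactness/continuity argument (the map sending a base point and a parameter $s$ to the center $(z',t')\cdot(s\xi,0)$ is continuous, and the inclusion $B(Q,r/2)\subset\Int(C)$ is an open condition) lets me extract a uniform $\e\le\d/2$. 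The comparison between Euclidean balls (natural in the explicit segment computations) and subRiemannian balls $B(\,\cdot\,,\e s)$ is handled by the standard ball-box estimate, noting that near $s=0$ the subRiemannian ball $B(\,\cdot\,,\e s)$ is comparable to a box of size $\e s$ in horizontal directions and $(\e s)^2$ in vertical directions, so it is in fact \emph{smaller} than the Euclidean cone one has already produced — once $\e$ is small enough.

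The main obstacle I anticipate is the first step: verifying that from a base point $(z',t')\in\ol C$ one genuinely reaches an \emph{open} set of points near $Q$ by horizontal lines, with the reached set large enough (in all $\ell$ vertical directions, not just one) to sweep out a cone of uniform subRiemannian aperture. In the M\'etivier case this is automatic because the skew form $Q(z,\cdot)$ is surjective for every $z\ne 0$; in the general two-step case this can fail, and the resolution must be the content of whatever structural lemma the paper proves before this theorem — presumably one decomposes $Z$ according to the kernel of the relevant skew-symmetric maps and argues that the ``missing'' vertical directions are compensated by moving the base point slightly within $\ol C$, using again that $C$ is convex and $\dim$-reducing to a subgroup where M\'etivier-type nondegeneracy holds. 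I would organize the proof so that this structural input is isolated as a lemma and the rest of the argument (the convexity ``closure-to-interior'' fact and the compactness-uniformity packaging) is formally independent of it.
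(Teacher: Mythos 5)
There is a genuine gap, and it sits exactly where you place your ``main obstacle'': the set of points reachable from a fixed base point $(z',t')$ by a \emph{single} horizontal segment is the horizontal plane $H_{(z',t')}=\{(z'+u,\,t'+Q(z',u)):u\in Z\}$, an $m$-dimensional affine subspace of the $(m+\ell)$-dimensional group. Once you fix the $Z$-component of the target, the direction $u$ is determined and the $T$-component of the endpoint is forced to be $t'+Q(z',u)$; there is no remaining freedom, in the M\'etivier case or otherwise. Hence ``the union over $w\in B(Q,r/2)$ of these segments'' only collects the points of $B(Q,r/2)$ lying on $H_{(z',t')}$ and sweeps out a cone inside a codimension-$\ell$ surface, not an open cone. (Even in the Heisenberg group one horizontal line from a point never reaches an open set; the M\'etivier condition makes the \emph{two}-segment map $(u_1,u_2)\mapsto e^{u_2\cdot X}e^{u_1\cdot X}$ a submersion, which is a different statement.) The paper's resolution is not a kernel decomposition or a reduction to a M\'etivier subgroup: it is to pass to broken horizontal paths $\Gamma_{(z',t')}(u_1,\dots,u_p)=e^{u_p\cdot X}\cdots e^{u_1\cdot X}(z',t')$ with $p$ large, prove a \emph{quadratic} openness of $\Gamma$ at the diagonal point $(\xi,\dots,\xi)$ (Theorem~\ref{opp}, via the recursive reduction of the forms $P_q$ in Lemmas~\ref{duetre} and~\ref{rick} --- the map need not be a submersion at any $p$ when $s\mapsto(s\xi,0)$ is abnormal), and then run an iterated convexity argument (Steps 1--2 of the paper's proof) showing that the image of the dilated parameter box $\Gamma_{(z',t')}(\la u_1,\dots,\la u_p)$, $\abs{u_j-\xi}<\e$, $\la\in[0,1]$, stays in $C$. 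None of this machinery appears in your proposal, and it is the actual content of the theorem.

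A secondary problem: you invoke as ``a standard convexity argument'' the fact that the open segment from a point of $\ol C$ to a point of $\Int(C)$ lies in $\Int(C)$. For horizontal convexity this is not standard; it is Proposition~\ref{trasversale}\ref{unnis} of the paper, whose proof itself rests on the openness Theorem~\ref{opp} (equivalently on the Cheeger--Kleiner argument that your construction is meant to replace). Using it here is circular. The compactness/ball--box packaging in your second step is fine as far as it goes, but it cannot be salvaged until the first step produces a genuinely open set.
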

   An analogous statement holds  without assuming  that  $(z_, t )\cdot (\xi,0)\in \Int (C)$, but requiring that 
   a  surface $\Sigma$  containing  $Q$ and transversal to the line containing $P$ and $Q$    is contained 
   in $C$.     Note also that the statement holds for $P=(z,t) $ in the closure of $C$ and becomes more significant when $(z,t)\in \p C$. Even the situation $P\in  \p C  \setminus C$ is included. In this case, the mere definition of convexity does not  ensure that the open horizontal segment connecting  $(z,t)$ with $(z,t)\cdot (\xi,0)$ is contained in $C$. However, the  proof of the  theorem shows that such segment is contaned in  $\Int(C)$.

The theorem above was already proved by Arena, Caruso  and Monti \cite[Theorem~1.4]{ArenaCarusoMonti12}, in the case of two-step groups of M\'etivier type.  Recall that a group of the form \eqref{machegruppo} satisfies the M\'etivier condition if for all $t\in T$ and for any non-zero $z\in Z$,   there is $\z\in Z$ such that $Q(z,\z)=t$. A similar, more qualitative statemeent was proved by Cheeger and Kleiner~\cite[Proposition~4.6]{CheegerKleiner10}, in the setting of the three-dimensional Heisenberg group.

In the present paper, we generalize the statement to general  step 2 groups. Our techniques are inspired by the argument in \cite{CheegerKleiner10}, but we exploit some higher-order  envelopes, see Section~\ref{ordine}. Namely, the key starting point of the classification argument of \cite{CheegerKleiner10}  relies on the analysis of the map
\begin{equation*}
Z\times Z\ni(u_1, u_2)\mapsto \Gamma(u_1, u_2):= e^{u_2\cdot X}e^{u_1\cdot X}(0,0),
\end{equation*}
which turns out to be a submersion at any point $(\xi,\xi)\neq (0,0)$ (the argument  in  \cite{ArenaCarusoMonti12} is essentially based on a similar property). This property fails to hold if the group does not satisfy the M\'etivier condition (see the discussion in Section~\ref{ordine}). We are then forced to analyze ``higher order'' maps of the form 
\begin{equation}\label{king} 
 Z^p\ni (u_1, u_2, \dots, u_p)\mapsto \Gamma(u_1, u_2, \dots, u_p)\mapsto e^{u_p\cdot X}\cdots e^{u_2\cdot X} e^{u_1\cdot X}(0,0),
\end{equation}
where $p$ can be greater than  $2$. It may happen that at some $(\xi,\xi,\dots, \xi)\neq 0\in Z^p$ there is no $p\in\N$ such that the map $\Gamma$ is a submersion.  However, we are able to show that, given any two-step Carnot group,   for sufficiently large $p$ the map $\Gamma $ is  \emph{open} at any point of the form 
$(\xi,\xi,\dots, \xi) $. See the statement in Theorem~\ref{opp} and see also Remark~\ref{presto}. The existence of the cone in~\eqref{coppifausto} follows  easily.

We apply our construction to the study in  a model case of the classification problem for monotone subsets 
  of a Carnot group. It is easy to see that in any Carnot group of step two, given any hyperplane~$\Sigma$, 
  any of the two open half spaces    whose union gives $\G\setminus\Sigma$ is a monotone set. The natural 
  question is whether  any monotone set~$E$ has a hyperplane as boundary. If $\G$ is of M\'etivier type, then 
  this turns out to 
  be true   under the mild assumption $\p E\subsetneqq \G$, which is always satisfied if $\G=\H^n$ is the $n$-th Heisenberg group.    The short argument in the proof of Proposition~\ref{decimo} 
  has been kindly  indicated to the author by Roberto Monti.   See Proposition~\ref{vameglio} for the case of~$\H^n$.

   The first significant    example of non M\'etivier two step Carnot group  appears in the following 
   situation. 
Consider  the direct product $ \G:=\H_{(x,y,t)} \times \R_u=\R^4$
with law 
\begin{equation*}
 (x,y,u,t)\cdot (x' , y', u', t')  =   (x+x' ,  y+y',u+u', t+ t' +2(y x'-y'x ))
\end{equation*}
and with horizontal left-invariant vector fields $X=\p_x+2y\p_t$, $Y=\p_y-2x\p_t$ and $U=\p_u$. We shall prove the following theorem.
 \begin{theorem} \label{massimino} 
   Let $E\subset \H\times \R$ be a monotone set. Then there is a three-dimensional hyperplane~$\Sigma$ such 
   that $\Int (E)$
   and $\Int (E^c)$ are   the  connected components of $
   \R^4\setminus\Sigma$. 
  \end{theorem}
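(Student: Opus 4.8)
The plan is to combine the product structure of $\H\times\R$ with the cone property of Theorem~\ref{conogelato}. We may assume $E\neq\emptyset,\G$, and also that $\Int(E)$ and $\Int(E^c)$ are nonempty: an h-convex subset of $\R^4$ with empty interior is contained in an affine hyperplane, and this is incompatible with $E^c$ being h-convex and $E\neq\emptyset$ (the h-convex hull of the two open halves of the complement of a hyperplane is all of $\R^4$). First I would slice. For fixed $u_0$, the affine embedding $(x,y,t)\mapsto(x,y,u_0,t)$ sends horizontal lines of $\H$ to horizontal lines of $\H\times\R$, and every horizontal line of $\H\times\R$ contained in $\{u=u_0\}$ (i.e.\ with vanishing $U$-component) arises this way; hence the slice $E_{u_0}:=\{(x,y,t):(x,y,u_0,t)\in E\}$ and its complement in $\H$ are both h-convex, so $E_{u_0}$ is a monotone subset of $\H$. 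By the Cheeger--Kleiner classification \cite{CheegerKleiner10}, $E_{u_0}$ is $\emptyset$, $\H$, or sandwiched, $\Pi_{u_0}\subseteq E_{u_0}\subseteq\overline{\Pi_{u_0}}$, between an affine half-space of $\R^3$ and its closure.

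Next, in the main (non-degenerate) case I would identify $\partial E$ with a Euclidean graph. Since $U=\partial_u$ is a \emph{central} horizontal field, its integral lines are the Euclidean lines $u\mapsto(x_0,y_0,u,t_0)$; h-convexity of $E$ and of $E^c$ forces, for each $(x_0,y_0,t_0)$, the fibre $\{u:(x_0,y_0,u,t_0)\in E\}$ to be a half-line (or $\R$, or $\emptyset$), and a connectedness argument makes the orientation of these half-lines consistent. Thus, over the open set where all fibres are proper half-lines, $\partial E$ is the graph $u=\psi(x,y,t)$, and the slicing above says every super- and sub-level set of $\psi$ is a half-space; since the affine hyperplanes $\{\psi=c\}$ are pairwise disjoint they are mutually parallel, so $\psi=\rho(\alpha x+\beta y+\gamma t)$ for a fixed covector $(\alpha,\beta,\gamma)$ and a monotone $\rho$. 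To force $\rho$ affine, one would impose h-convexity of $E$ and $E^c$ along the horizontal lines in the directions $X+\lambda U$ and $Y+\mu U$: through $(x_0,y_0,u_0,t_0)$ the first is $s\mapsto(x_0+s,y_0,u_0+\lambda s,t_0+2y_0 s)$, and writing out the membership condition $u_0+\lambda s\le\rho\big((\alpha x_0+\beta y_0+\gamma t_0)+(\alpha+2\gamma y_0)s\big)$ and letting the parameters range, one gets that $\{s:\ell(s)\le\rho(s)\}$ is an interval with interval complement for \emph{every} affine $\ell$, which forces $\rho$ affine. Then $E$ is sandwiched between an open affine half-space of $\R^4$ and its closure, with $\Sigma=\{\psi=\text{const}\}$, which is the conclusion.

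The main obstacle will be everything the clean picture hides: the degenerate loci and the gluing. Where the $U$-fibres are all of $\R$ (resp.\ all empty), $\partial E$ is not a $u$-graph --- locally it is ``vertical'', of the form $\partial E'\times\R$ --- so one must instead recognize $\partial E$ as a graph over another coordinate hyperplane, or, in general, as an \emph{intrinsic} graph over a codimension-one subgroup; and one must rule out that the half-space data of the slices $E_u$ (the conormal of $\Pi_u$, its offset, the ``type'' of $E_u$) jumps with $u$. This is exactly where Theorem~\ref{conogelato} enters. At a boundary point $P\in\partial E$ possessing a horizontal direction $\xi$ with $P\cdot(\xi,0)\in\Int(E)$ --- such points are dense in $\partial E$ --- it produces a solid truncated subRiemannian cone inside $E$ with vertex $P$, uniformly over nearby vertices; applied to $E^c$ along a direction $\eta$ with $P\cdot(\eta,0)\in\Int(E^c)$ (and $\eta$ may be taken roughly opposite to $\xi$, since $P\cdot(\pm\xi,0)$ cannot both lie in $\Int(E)$ without $P$ doing so), it produces a disjoint cone inside $E^c$. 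The resulting two-sided cone exclusion makes $\partial E$ a genuine intrinsic Lipschitz graph transverse to a \emph{fixed} direction on a whole neighbourhood, and, crucially, these cones join points lying in different slices $\{u=\text{const}\}$, which is what pins the slice data together. Propagating this rigidity by h-convexity and connectedness over all of $\R^4$ shows the graph direction is globally constant and the graph function globally affine, so $\partial E$ is a single affine hyperplane $\Sigma$; as $\Int(E)$ and $\Int(E^c)$ are then nonempty, disjoint, connected and cover $\R^4\setminus\Sigma$, they are its two connected components.
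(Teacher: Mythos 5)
Your overall architecture --- slice along $u$, apply the Cheeger--Kleiner classification to each slice, use Theorem~\ref{conogelato} to realize $\p E$ as an intrinsic graph, show the graph function is affine, then globalize --- is essentially the paper's strategy, and your treatment of the $U$-graph case is attractive (disjoint affine level planes $\{\psi=c\}=\p\Pi_c$ must be parallel, so $\psi$ factors through a fixed linear functional, and the ``interval with interval complement'' condition along the lines of $X+\la U$ and $Y+\mu U$ forces the profile $\rho$ to be affine). However, there are genuine gaps. First, your opening reduction rests on the claim that a horizontally convex subset of $\R^4$ with empty interior is contained in an affine hyperplane. This is false: already in $\H$ the union of the $x$-axis, the $y$-axis and the $t$-axis is horizontally convex (no two points on distinct axes are aligned, and no two distinct points of the $t$-axis are aligned), has empty interior, and affinely spans $\R^3$. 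The correct statement requires monotonicity of \emph{both} $E$ and $E^c$ and is exactly what the paper's Lemma~\ref{frontier} proves, by combining the Cheeger--Kleiner classification of the slices with the transversality statement of Proposition~\ref{trasversale}; you cannot get it from convexity of $E$ alone.

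Second, and more seriously, the entire case in which $\p E$ is \emph{not} a $u$-graph --- your ``degenerate locus'', where the $U$-fibres are full lines and the boundary must be written as an intrinsic $(pX+qY)$-graph $x=\psi(y,u,t)$ --- is deferred to a paragraph that describes what should happen rather than proving it. This is where the bulk of the paper's work lies (Propositions~\ref{parte_facile} and~\ref{pappardella}): one must combine the slice data $\psi(y,u,t)=\bigl(a(u)y+b(u)t+c(u)\bigr)/\bigl(1-2b(u)y\bigr)$ coming from Cheeger--Kleiner with affineness in $u$ (obtained by aligning boundary points in different slices along $U$-directions), and then kill the residual quadratic coefficient $a_1$ by an explicit alignment computation; none of this is automatic from ``two-sided cones pin the slice data together''. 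Likewise the gluing of the local affine graphs into a single global hyperplane is asserted rather than proved; the paper does this via the alignment properties of skew and parallel lines in $\H$ (Propositions~\ref{pappardella} and~\ref{ugra_global}) together with Lemma~\ref{giroconto}, which rules out the possibility that $\p E$ contains a whole horizontal plane without being that plane. As it stands, your argument proves the theorem only in the everywhere-non-degenerate $U$-graph case, modulo the false opening lemma.
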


  To prove such theorem, keeping in mind the argument of \cite{CheegerKleiner10}, we have to introduce some new ideas, in order to have a starting point which is suitable for a higher-dimensional situation. Namely we 
  will use our result on cones, and the consequent fact that around a point~$P$ at  the boundary of $E$,  
  either the boundary is contained in the horizontal plane $H_P$, or it is contained in an intrinsic graph. 
  See the discussion in Section~\ref{custom74}.   We are then able to classify, first locally and then 
  globally,  intrinsic graphs whose corresponding 
  epigraphs are monotone sets. 
  
  Let us mention that an alternative approach to Theorem~\ref{massimino} could 
  be carried out via a parametric analysis of the intersections of~$E$ with sets with $u$= constant, which are basically 
  copies of the Heisenberg group. This alternative 
  method however would not provide any help in the analysis of more general situations.
 We are choosing   instead our approach with graphs in the perspective of applying  similar ideas to 
  more general two-step Carnot groups. Concerning the situation in higher step, the examples 
  in~\cite{BellettiniLedonne12} suggest that monotone sets
  in higher step Carnot groups could be very ugly.

  The plan of the paper is the following. In Section~\ref{ordine} we discuss the openness property of the 
  map~$\Gamma$ mentioned in~\eqref{king}. In Section~\ref{homo} we apply such property to the construction 
  of the 
  cones  in Theorem~\ref{conogelato}. In Section~\ref{custom74} we show that the 
  boundary of a 
  monotone set  can be written as an intrinsic graph.     Section~\ref{4} is devoted to the classification 
  of monotone sets in the group~$\H\times\R$.

  \section{Higher order envelops,  open maps and cones }
  \label{sapporo} 
\subsection{Multiexponential open maps }

\label{ordine} 
In this section, given  $p\in\N$,  we consider the map $\Gamma:Z^p\to Z\times T$,
\begin{equation}\label{eppi} 
 \Gamma(u_1,\dots, u_p):=e^{u_p\cdot X}\cdots 
 e^{u_2\cdot X}e^{u_1\cdot X}(0)=\Bigl(\sum_{j\le p}u_j,
 \sum_{1\le j<k\le p}Q(u_j,u_k)
 \Bigr).
\end{equation} 
Here we use the notation $u\cdot X=\sum_{k=1}^m \scalar{u}{e_k} X_k$ for a vector $u= (\scalar{u}{e_1} , \dots,\scalar{u}{e_m})\in Z=\R^m$.
Our purpose is to prove that the map $\Gamma$ is open at any point $(\xi,\dots,\xi)$.

\begin{theorem}\label{opp} 
 In any two-step Carnot group with law~\eqref{machegruppo} there is  
 $p\in\N$ such that  for all $\xi\in Z$ the map~$\Gamma$ in~\eqref{eppi}
is open at any point $(\xi,\dots,\xi)\in Z^p$. More precisely, for all $\xi\in Z$ there is $c_0>0$ such that 
$\Gamma (B_\Eucl((\xi, \xi,\dots, \xi), r))\supseteq B_\Eucl(\Gamma(\xi,\dots, \xi),c_0 r^2) $ for all $r\leq c_0$.
\end{theorem}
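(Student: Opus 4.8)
The plan is to Taylor-expand $\Gamma$ about $\bar u:=(\xi,\dots,\xi)$ and to observe that its quadratic remainder recovers precisely the directions that the differential misses when the group is not of M\'etivier type. Writing $u_j=\xi+v_j$ and using $Q(\xi,\xi)=0$ (skew-symmetry), one gets the \emph{exact} identity
\[
\Gamma(\xi+v_1,\dots,\xi+v_p)=\Bigl(p\xi+\sum_j v_j,\ L(v)+R(v)\Bigr),\qquad
L(v):=\sum_{j=1}^p(2j-1-p)\,Q(\xi,v_j),\qquad R(v):=\sum_{j<k}Q(v_j,v_k),
\]
with no higher order terms; thus $\Gamma(\bar u)=(p\xi,0)$ and $d\Gamma_{\bar u}(v)=(\sum_j v_j,L(v))$. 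Put $W:=Q(\xi,Z)\subseteq T$; then $\operatorname{Im}d\Gamma_{\bar u}=Z\times W$, and let $\pi^{\perp}:T\to W^{\perp}$ be the orthogonal projection. The whole point is to exhibit $p$ (uniform in $\xi$) and a vector $v^{*}\in K:=\ker d\Gamma_{\bar u}$ with $\pi^{\perp}(R(v^{*}))=0$ such that the linear map $h\mapsto\pi^{\perp}(\widetilde R(v^{*},h))$ — the differential at $v^{*}$ of $F:=\pi^{\perp}\circ R|_{K}$, where $\widetilde R$ is the polarization of $R$ — carries $K$ \emph{onto} $W^{\perp}$. Granting this, $F$ is a submersion at $v^{*}$, hence open there, and the cone estimate will follow by a quantitative inverse-function argument.

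For the construction the ``partial sum'' form of $R$ is convenient: with $V_k:=v_1+\cdots+v_k$, $V_0:=0$, one has $R(v)=\sum_{k=1}^{p}Q(V_{k-1},V_k)$, while on $\{\sum_j v_j=0\}$ the condition defining $K$ reads $\sum_{k=1}^{p-1}V_k\in\ker Q(\xi,\cdot)$. Take $p:=4\ell$, partition the slots into $\ell$ consecutive blocks of length $4$, use the first $r:=\dim W^{\perp}\ (\le\ell)$ of them and set $v=0$ elsewhere. In block $i$ (slots $4i-3,\dots,4i$) put the entries $(W_i,-W_i,-W_i,W_i)$, i.e. block partial sums $(0,W_i,0,-W_i,0)$, where $W_1,\dots,W_r\in Z$ are chosen so that $\sum_{i=1}^r\pi^{\perp}(Q(W_i,Z))=W^{\perp}$; this is possible because $\Span\{Q(w,z):w,z\in Z\}=T$ by step two, whence $\pi^{\perp}$ of that span equals $W^{\perp}$, and a greedy selection needs at most $r$ vectors. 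Each block has vanishing sum, so $\sum_j v^{*}_j=0$; consecutive zero-sum blocks do not interact, since in $R$ and in $\widetilde R$ every pair of indices straddling two blocks sums to $Q(0,\cdot)=0$, hence $R(v^{*})=\sum_i\bigl(Q(W_i,0)+Q(0,-W_i)\bigr)=0$ and likewise the off-block part of $\widetilde R$ vanishes; and within each block $\sum_k V_k=W_i+0-W_i=0$, so $v^{*}\in K$. Finally, perturbing block $i$ by the $K$-vector $h^{(i)}(\delta)$ with block partial sums $(0,\delta,-\delta,0,0)$ gives, by a two-line computation, $\widetilde R(v^{*},h^{(i)}(\delta))=-2\,Q(W_i,\delta)$; hence $\operatorname{Im}\bigl(h\mapsto\pi^{\perp}(\widetilde R(v^{*},h))\bigr)\supseteq\Span\{\pi^{\perp}Q(W_i,\delta):i\le r,\ \delta\in Z\}=W^{\perp}$, as required. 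I expect this step to be the main obstacle: blocks of length $2$ or $3$ do not suffice in general — under the $K$-constraint a length-$3$ block only produces $Q(Z,\ker Q(\xi,\cdot))$-type directions, whose $\pi^{\perp}$-image is strictly smaller than $W^{\perp}$ already in the simplest non-M\'etivier example — so one genuinely has to pass to ``higher order'' words, and isolating the correct gadget (length-$4$ blocks, with the bookkeeping that they do not interfere) is the crux.

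For the quantitative openness one runs the quantitative implicit function theorem with $r$ as a small parameter. Normalize $|v^{*}|=1$ (if $W^{\perp}=\{0\}$, $d\Gamma_{\bar u}$ is already onto and one even gets a linear estimate); fix a linear complement $\mathcal{C}$ of $K$ on which $w\mapsto(\sum_j w_j,L(w))$ is an isomorphism onto $Z\times W$, and seek the preimage of a target $(\sigma,\tau)$ with $|\sigma-p\xi|,|\tau|\le c_0 r^{2}$ in the form $v=rv^{*}+h^{K}+h^{\Sigma}$, $h^{K}\in K$, $h^{\Sigma}\in\mathcal{C}$. Substituting into $\Gamma(\xi+v)=(\sigma,\tau)$ and using $L(v^{*})=0$, $R(v^{*})=0$, $\pi^{\perp}R(v^{*})=0$: the $Z$- and $W$-components determine $h^{\Sigma}$ from $h^{K}$ and the data by a small contraction, and after the rescaling $h^{K}=rk$, $h^{\Sigma}=r^{2}m$, $\sigma-p\xi=r^{2}\bar\sigma$, $\tau=r^{2}\bar\tau$ the $W^{\perp}$-component becomes $dF_{v^{*}}(k)+\pi^{\perp}R(k)=\pi^{\perp}\bar\tau+O(r)$. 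At $r=0$ this is solvable with $|k|$ bounded whenever $|\bar\tau|$ is small, since $dF_{v^{*}}$ is onto $W^{\perp}$; the coupled system is then solved for all small $r$ by the implicit function theorem, yielding a solution with $|v|\le Cr$. Replacing $r$ by $r/C$ gives $\Gamma(B_{\Eucl}(\bar u,r))\supseteq B_{\Eucl}(\Gamma(\bar u),c_0 r^{2})$ for $r\le c_0$, with $p=4\ell$ independent of $\xi$ and $c_0$ depending on $\xi$.
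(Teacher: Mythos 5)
Your proposal is correct, but it takes a genuinely different route from the paper's. Both start from the same exact expansion of $\Gamma$ at $(\xi,\dots,\xi)$ (your $L+R$ is exactly the paper's $f_\xi$), but the paper then proceeds by explicit elimination: it imposes $\sum_i(p-2i+1)u_i=0$ to kill the $\xi$-dependent term, solves the $Z$-equation for $u_{p-1}$, imposes one further linear constraint, and reduces everything to the single equation $P_{p-3}(u_1,\dots,u_{p-3})=(p-3)t$ for the explicit quadratic form $P_q$ of~\eqref{piqu}; Lemma~\ref{rick} diagonalizes $P_{2\ell+1}$ into decoupled terms $Q(v_{2k},v_{2k+1})$ and Lemma~\ref{duetre} solves $\sum_kQ(z_k,\z_k)=t$ with the homogeneous bound $|z_k|,|\z_k|\le C|t|^{1/2}$, giving the quadratic openness with completely explicit solutions and no inverse-function machinery. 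You instead describe the degeneracy invariantly ($\operatorname{Im}d\Gamma_{\bar u}=Z\times Q(\xi,Z)$), build a kernel point $v^*$ out of zero-sum blocks of length four at which the $W^\perp$-projection of the quadratic remainder, restricted to the kernel, is a submersion, and conclude by a scaling and implicit-function argument. I checked your block computations ($v^*,h^{(i)}\in K$, $R(v^*)=0$, the non-interaction of consecutive zero-sum blocks, and $\widetilde R(v^*,h^{(i)}(\delta))=-2Q(W_i,\delta)$) and they are right; your greedy choice of $W_1,\dots,W_r$ plays the role of Lemma~\ref{duetre}, and the length-four gadget replaces the change of variables of Lemma~\ref{rick}. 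The paper's route buys explicitness and the clean estimate $|u_j|\le C(|z|+|t|^{1/2})$; yours buys a conceptual reason for the $r^2$ gain (second-order submersivity at a kernel point) and a comparable $p$ ($4\dim T$ versus roughly $2\dim Z+2$). Two points in your final step deserve more care than you give them: the solution $k$ of the limiting equation $dF_{v^*}(k)+\pi^\perp R(k)=\pi^\perp\bar\tau$ depends on the target, so the implicit function theorem must be applied uniformly over the compact ball of rescaled targets (or replaced by a quantitative openness/degree argument), and the resulting constants, hence $c_0$, depend on $\xi$ through $v^*$ and the $W_i$ --- which the statement permits, but which you should state explicitly.
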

Here and hereafter, if needed,  $B_\Eucl $ denotes Euclidean balls. From the proof, it will be clear that the number~$p$ can be estimated in terms of the dimension~$m$ of~$ Z$. However we do not need a sharp estimate of~$p$. 
 
\begin{remark} 
\label{presto} 
% \begin{enumerate}[nosep, label=(\roman*)]
%  \item 
If the group satisfies the M\'etivier condition, then it suffices 
 to choose $p=2$ and it turns out that for all $\xi\neq 0$ the map $\Gamma$ is a submersion (see 
 \cite{CheegerKleiner10}). For more general groups, we are forced to consider larger 
 values of $p$. Furthermore, the map $\Gamma$ can not  be a submersion at any point $(\xi,\dots, \xi)\in Z^p$,  with 
 any $p\in\N$
if the curve $\gamma(s)= (s\xi, 0)$ is an  abnormal extremal  for the length-minimizing 
 subRiemannian problem (note that the M\'etivier condition characterizes   two step Carnot groups where  
 subRiemannian abnormal  geodesics do not appear at all, 
 see \cite[Proposition~3.6]{MontanariMorbidelli15}). 
 
 In spite of these pathologies,   the ``quadratic'' 
 openness proved in Theorem~\ref{opp}  holds and is sufficient for our purposes.  We observe
 finally that our theorem holds also for $\xi=0$. 
% \end{enumerate}
\end{remark}

In order to prove Theorem \ref{opp} we need to show some properties of the following  map.
Define for all $q\in\N$ the  function  $H_{2q}:Z^{2q}\to T$,
\begin{equation*}
% \tag{$F_q$}\label{Fq} 
H_{2q}  (z_1,\z_1,z_2,\z_2,\dots,z_q,\z_q) :=\sum_{k=1}^qQ(z_k,\z_k)\in T.         
\end{equation*}

\begin{lemma}\label{duetre} 
 Let $Z=\R^m$ and assume the H\"ormander condition. Then there is $q\in \N$ and $c>0$ such that 
\begin{equation}\label{accaqu} 
 H_{2q}(B_\Eucl(0,r))\supseteq B_\Eucl(0, cr^2)\quad\text{for all $r\in\mathopen]0,+\infty\mathclose[.$}
\end{equation}  

 \end{lemma}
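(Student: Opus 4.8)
The plan is to exploit the Hörmander (bracket-generating) condition to produce, via values of the single quadratic form $Q$, a full basis of the layer $T=\R^\ell$, and then to use a scaling/homogeneity argument to upgrade "surjectivity" into the claimed quadratic lower bound on images of Euclidean balls. First I would observe that the image of $Q$ spans $T$: indeed, the Hörmander condition for the vector fields $X_1,\dots,X_m$ in a two-step group says exactly that the brackets $[X_i,X_j]$ (which encode the skew-symmetric forms $A^\b$ via $Q$) together with the $X_i$ span the whole Lie algebra, so $\Span\{Q(z,\z):z,\z\in Z\}=T$. Hence there exist pairs $(a_1,b_1),\dots,(a_\ell,b_\ell)$ in $Z\times Z$ such that $Q(a_1,b_1),\dots,Q(a_\ell,b_\ell)$ form a basis of $T$. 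Set $q:=\ell$ and consider $H_{2q}$ evaluated near the configuration $(a_1,b_1,\dots,a_\ell,b_\ell)$.

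The second step is to make the map genuinely surjective onto a neighborhood of $0$. Note that $H_{2q}$ is $2$-homogeneous in a suitable sense: replacing each argument $z_k,\z_k$ by $\lambda z_k,\lambda\z_k$ multiplies the output by $\lambda^2$, because $Q$ is bilinear. Consider the restriction of $H_{2q}$ to the affine-type family $t\mapsto H_{2q}(\sqrt{t_1}\,a_1,\sqrt{t_1}\,b_1,\dots)$ — more cleanly, fix the "directions" $a_k/|a_k|$ etc. and use the magnitudes as parameters. Because $Q(a_k,b_k)$ is a basis of $T$, by continuity and an inverse-function-type argument (or simply because any $t\in T$ with $|t|$ small can be written as $\sum_k s_k Q(a_k,b_k)$ with $s_k$ small, and then absorbing signs and magnitudes of $s_k$ into the rescaled vectors) one gets: every $t\in B_\Eucl(0,c_1)\subset T$ equals $H_{2q}(w)$ for some $w\in Z^{2q}$ with $|w|\le C_1$. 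This handles one fixed scale.

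The third step is the scaling bootstrap. Given arbitrary $r>0$, rescale: if $t\in B_\Eucl(0,cr^2)$ then $t/r^2\in B_\Eucl(0,c)$, so by Step 2 there is $w$ with $|w|\le C_1$ and $H_{2q}(w)=t/r^2$; then $r w\in B_\Eucl(0, C_1 r)$ and, by $2$-homogeneity, $H_{2q}(rw)=r^2 H_{2q}(w)=t$. Thus $H_{2q}(B_\Eucl(0,C_1 r))\supseteq B_\Eucl(0,cr^2)$ for all $r>0$; after relabeling the constant (replace $r$ by $r/C_1$, adjust $c$) this is exactly \eqref{accaqu}. The main obstacle I anticipate is Step 2: one must carefully organize the elementary but slightly fiddly linear algebra that turns "$Q(a_k,b_k)$ is a basis" into a uniform statement "small $t$'s are hit by bounded $w$'s," paying attention to signs (since a coefficient $s_k$ may be negative, one replaces $(a_k,b_k)$ by $(-a_k,b_k)$, using skew-bilinearity $Q(-a_k,b_k)=-Q(a_k,b_k)$) and to the square-root reparametrization needed to match the homogeneity degree $2$; everything else is routine compactness and scaling.
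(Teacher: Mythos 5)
Your proposal is correct and follows essentially the same route as the paper: use the H\"ormander condition to extract values of $Q$ spanning $T$, write an arbitrary $t$ as a linear combination of them, absorb the coefficients into square-root rescalings of the arguments (handling signs via skew-symmetry), and finish by $2$-homogeneity of $H_{2q}$. The only cosmetic difference is the choice of $q$: you take $q=\ell=\dim T$ from an abstract basis $\{Q(a_k,b_k)\}$, while the paper groups the basis $\{Q(e_j,e_k)\}_{j<k}$ by bilinearity into $q=m-1$ terms $Q\bigl(\sum_j c_{jk}e_j,e_k\bigr)$; either choice proves the lemma.
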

 From the proof it is clear that, if $Z$ has dimension $m$, then any $q\geq m-1 $ ensures~\eqref{accaqu}   for some~$c>0$. 
\begin{proof}[Proof of Lemma \ref{duetre}] 
Write $Q(z,\z)=\scalar{z}{A\z}$. The step two  H\"ormander condition ensures that $\Span\{A_{jk}:j<k\}=T$. Since $A_{jk}= Q(e_j,e_k)$, we conclude  that for all $t\in T$ there is a vector  $(c_{jk})_{1\leq j<k\leq m} $  such that
\begin{equation}\label{indipo} 
\sum_{j<k} c_{jk}  Q(e_j, e_k)=t.                                \end{equation} 
Writing the left-hand side in the form $\sum_{k=2}^m Q(\sum_{j=1 }^{k-1}c_{jk}e_j, e_k)$, we see  
that the map $H_{2q}$ with $q=m-1$ is onto.

  Next, to conclude the argument we use the following homogeneity argument. 
Choose 
 $(c_{jk})_{j<k}$ such that~\eqref{indipo} holds with  estimate $\sum\abs{c_{jk}}\leq C \abs{t}$  (this can be done  with $C$ uniform in $t$ by elementary linear algebra).
Then for all $k=2,\dots, m$ we let $w_k:=\sum_{j=1}^{k-1} c_{jk} e_j$ (so that $|w_k|\leq C|t|$)
and we choose
\begin{equation*}
z_{k }:=|w_k|^{-1/2} w_k 
\quad\text{ and }\quad 
\z_k =|w_k|^{1/2}e_k
\end{equation*} 
(if $w_k=0$ then we take $z_k=\z_k=0$). Thus, we get  
$\sum_{k=2}^{m }Q(z_k,\z_k)= t$ with the required estimate 
$|z_k|,|\z_k|\leq C|t|^{1/2}$ for all $k=2,\dots, m $.
\end{proof}
 
 \begin{proof}[Proof of Theorem \ref{opp}]
Let $\xi\in Z$ and define  
\begin{equation*}
\begin{aligned}
 f_\xi(u_1, \dots, u_p):& =\Gamma(\xi+u_1, \dots, \xi + u_p)-    \Gamma(\xi, \dots, \xi)
 \\&= \Big(\sum_{j\leq p}u_j, Q\Big( \sum_{i=1}^p(p-2i+1)u_i,\xi\Big)  + \sum_{1\leq j<k\leq p}Q(u_j, u_k)\Big).
\end{aligned}
\end{equation*}

 We will show that 
 for each $(z,t)\in Z\times T $
 there is $(u_1,\dots, u_p)\in Z^p$ such that 
\begin{equation*}
  f_\xi(u_1,\dots, u_p) =(z,t)
   \quad\text{and }\quad
  \abs{u_1}+\cdots+\abs{u_p} \le C(\abs{z}+\abs{t}^{1/2}).
\end{equation*}

Let  $(z,t)\in Z\times T$. Write the system
\[
  \Bigl(\sum_{j\le p}u_j\,,\,Q\Bigl(\sum_{i=1}^p(p-2i+1)u_i,\xi\Bigr)+
 \sum_{1\le j<k\le p}Q(u_j,u_k)
 \Bigr)=(z,t).
\]
 Since  we can not expect that the map is a submersion for all $\xi$, see Remark~\ref{presto},  we give a discussion independent of $\xi$ by restricting  ourselves to the vectors $u_j$ such that
$  \sum_{i=1}^p(p-2i+1)u_i=0$, or equivalently
\begin{equation}\label{uppi} 
 u_p=\sum_{j=1}^{p-1}\frac{p-2j+1}{p-1}u_j.                                           \end{equation} 
In such way, we are lead to work with a new  system on $Z^{p-1}$ 
of the form
\[\begin{aligned}
 &  \Bigl( \sum_{j\le p-1}u_j+\sum_{j\le p-1}  
 \frac{p-2j+1}{p-1}u_j \, ,\, 
 \sum_{j<k\le p-1}Q(u_j,u_k)+
 Q\Bigl(\sum_{k\le p-1}u_k,\sum_{j\le p-1}
 \frac{p-2j+1}{p-1}u_j
 \Bigr)\Bigr)
\\&=(z,t).
 \end{aligned}
\]
The $Z$-component is
$
 \sum_{j\le p-1}\frac{2(p-j)}{p-1}u_j =z
$
and, after some calculation we  see that the $T$-com\-po\-nent is 
\[
\begin{aligned}
&\sum_{j<k\le p-1}Q(u_j,u_k)+
 Q\Bigl(\sum_{k\le p-1}u_k,\sum_{j\le p-1}
 \frac{p-2j+1}{p-1}u_j
 \Bigr)
 \\&=\sum_{j<k}\Bigl\{
 1+\frac{p-2k+1}{p-1}-\frac{p-2j+1}{p-1}
 \Bigr\}Q(u_j,u_k)
 \\&
 =\frac{1}{p-1}\sum_{j<k\le p-1}(p+2j-2k-1)Q(u_j,u_k).
\end{aligned}
\]
Ultimately, we get the system
\[
 \left\{\begin{aligned}
         &\sum_{j\le p-1}(p-j)u_j=\frac{p-1}{2} z
         \\&\sum_{j<k\le p-1}(p+2j-2k-1)Q(u_j,u_k)=(p-1)t.
        \end{aligned}
\right.
\]
From the first equation we find 
\begin{equation}\label{upimenouno} 
u_{p-1}=\frac{(p-1)z}{2}- \sum_{j\le p-2}(p-j) u_j.
\end{equation} 
Therefore, the second equation becomes
\begin{equation*}
\begin{aligned}
& \sum_{j<k\leq p-2}(p-1-2(k-j))Q(u_j, u_k) 
+
 \sum_{j\leq p-2}(-p+1+2j) Q\Big( u_j, \frac{p-1}{2}z -\sum_{k\leq p-2} (p-k) u_k \Big)
\\&\qquad =(p-1)t.
\end{aligned}
\end{equation*}
After some simplifications we get
\begin{equation*}
\begin{aligned}
 Q\Big (
 \sum_{j\leq p-2} (-p+1+2j)u_j,\frac{p-1}{2}z
 \Big )
 + (p-1)\sum_{j<k\leq p-2}(1+k-j) Q(u_j, u_k) =(p-1)t.
\end{aligned}
\end{equation*}
Next we eliminate the first term by choosing
$\sum_{j\leq p-2}(-p+2j+1) u_j=0$
or, in other words,     
\begin{equation}\label{upimenodue} 
u_{p-2}=\sum_{j\le p-3}\frac{p-2j-1}{p-3}u_j .                                                         \end{equation} 
We get
\begin{equation*}
\begin{aligned}
 t&=
 \sum_{j<k\leq p-3}(1+k-j)Q(u_j, u_k) +\sum_{j\leq p-3}(p-1-j)Q\Big( 
 u_j,
 \sum_{k\leq p-3}\frac{p-2k-1}{p-3}u_k
 \Big)
\end{aligned}
\end{equation*}
which, after some manipulations  takes the simple form
\begin{equation}
 \label{mp10} 
\sum_{1\le j<k\le p-3}(p-3-2(k-j) )Q(u_j,u_k)= (p-3)t. 
\end{equation}

In order to solve the system \eqref{mp10}, we take $p-3=2\ell+1$ a sufficiently large odd number and we use Lemma~\ref{rick} below.  This ensures that for all $t\in T$ there are $u_1, \dots, u_{p-3}$ which solve  \eqref{mp10} and such that  $ |u_1|+\cdots +|u_{p-3}|\leq C|t|^{1/2}$.
Then, by \eqref{upimenodue} we find  $u_{p-2}$ with estimate $|u_{p-2}|\leq C|t|^{1/2}$. Furthermore, \eqref{upimenouno} gives the  value of $u_{p-1}$ and the estimate  
$
|u_{p-1}|\leq C(|z|+|t|^{1/2})$. 
 To conclude the proof, we get $u_p$ from \eqref{uppi} again evaluating with 
 $ |z|+|t|^{1/2}  $.
\end{proof}

In order to state and prove Lemma~\ref{rick}, define for $q\geq 3$
\begin{equation}
 \label{piqu}
 P_q(u_1,\dots, u_q):= \sum_{1\le j<k\le q}(q-2(k-j))  Q(u_j,u_k)
\end{equation} 
It is easy to see that  $P_2=0$ and  we agree to let  $P_1= 0$, if needed.

 \begin{lemma}[Properties of the function  {\eqref{piqu}}]
 \label{rick} 
Let $q=2\ell+1\geq 5$ be an odd number. Then, there is a linear change of variable $T_\ell: Z^{2\ell+1}\to Z^{2\ell +1}$, which we denote by $(u_1, \dots u_{2\ell+1})\mapsto (v_1, \dots, v_{2\ell +1})$
such that 
\begin{equation*}
 P_{2\ell+1}(u_1,\dots, u_{2\ell+1})=\sum_{k=1}^\ell
 \frac{2\ell+1}{2k+1}Q(v_{2k}, v_{2k+1}).
\end{equation*}
In particular, by Lemma~\ref{duetre},  if the H\"ormander condition holds, then the map $P_{2\ell+1}$ is open at the origin in the sense~\eqref{accaqu} for  sufficiently large $\ell$.
\end{lemma}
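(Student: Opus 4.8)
The plan is to find the linear change of variables $T_\ell$ explicitly by a kind of telescoping/pairing procedure. The quadratic form $P_{2\ell+1}$ is a bilinear combination $\sum_{j<k}(q-2(k-j))Q(u_j,u_k)$ with $q=2\ell+1$, and the key observation is that the coefficient $q-2(k-j)$ is antisymmetric about the ``middle distance'' $(k-j)=\ell$ (it vanishes exactly when $k-j=\ell$) and is a strictly decreasing function of the gap $k-j$. This strongly suggests pairing the index $j$ with the index $j'=j+\ell$ or reflecting indices around the center $\ell+1$. First I would try the substitution that reflects the ordering: since $Q$ is skew, $Q(u_j,u_k)=-Q(u_k,u_j)$, so reindexing by $i\mapsto q+1-i$ sends the coefficient $q-2(k-j)$ to itself up to sign in a way that lets us symmetrize. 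The goal is to produce, after the change of variables, only ``consecutive-type'' terms $Q(v_{2k},v_{2k+1})$ with the stated coefficients $\frac{2\ell+1}{2k+1}$.

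Concretely, I would proceed by induction on $\ell$. For the base case $\ell=2$ (so $q=5$) one writes out $P_5$ explicitly — it has terms with gaps $1,2,3,4$ and coefficients $3,1,-1,-3$ — and checks by direct linear algebra that an explicit invertible substitution brings it to $\frac{5}{3}Q(v_2,v_3)+\frac{5}{5}Q(v_4,v_5)=\frac53 Q(v_2,v_3)+Q(v_4,v_5)$; the variable $v_1$ does not appear, which is fine. For the inductive step, I would extract from $P_{2\ell+1}$ the ``outermost'' pairing — grouping the contributions involving the extreme indices — so as to split off one term of the form $\frac{2\ell+1}{3}Q(w,w')$ (matching the $k=\ell$ summand, noting $2k+1=2\ell+1$ gives coefficient $1$... so more precisely the $k=1$ summand has coefficient $\frac{2\ell+1}{3}$), leaving a quadratic form in fewer variables that, after rescaling, is proportional to $P_{2\ell-1}$ in the remaining variables. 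Then the inductive hypothesis finishes the job. The bookkeeping of exactly which linear combinations to pick is the delicate part; the coefficients $\frac{2\ell+1}{2k+1}$ in the statement are precisely what comes out of this recursive rescaling, since at each stage one divides by something like $\frac{2k+1}{2\ell+1}$.

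The main obstacle I expect is verifying that the change of variables $T_\ell$ is genuinely invertible (a linear automorphism of $Z^{2\ell+1}$), not merely a linear map realizing the identity of quadratic forms — since the lemma explicitly asserts $T_\ell$ is a change of variable. One has to track the triangular structure of the substitutions \eqref{uppi}, \eqref{upimenodue}-type eliminations used earlier and confirm the composite is unipotent (hence invertible) on each block, or else exhibit the inverse directly. A secondary, purely computational nuisance is the ``after some manipulations'' step: carefully expanding $\sum_{j<k}(q-2(k-j))Q(u_j,u_k)$ under the substitution and collecting coefficients, using skew-symmetry of $Q$ to fold $Q(u_j,u_k)$ and $Q(u_k,u_j)$ terms together, to see that all cross-terms except the desired diagonal pairs $Q(v_{2k},v_{2k+1})$ cancel. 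Once $T_\ell$ is in hand, the final sentence of the lemma is immediate: applying Lemma~\ref{duetre} with $2q'$ playing the role of $2\ell$ there (so $q'=\ell$, which is $\geq m-1$ for $\ell$ large) shows each $Q(v_{2k},v_{2k+1})$ contributes surjectively with the quadratic estimate $|v|\leq C|t|^{1/2}$, and summing over $k=1,\dots,\ell$ gives the openness of $P_{2\ell+1}$ at the origin in the sense of \eqref{accaqu}, with the norm bounds transported back to the $u_j$ through the (fixed, hence bounded) linear map $T_\ell^{-1}$.
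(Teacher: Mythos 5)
Your outline is essentially the paper's own proof: the paper establishes exactly the recursion you describe, namely $P_q(u_1,\dots,u_q)=Q(v_{q-1},v_q)+\tfrac{q}{q-2}P_{q-2}(u_1,\dots,u_{q-2})$, via the explicit substitution $v_{q-1}=\sum_{j\le q-1}(2j-q)u_j$ and $v_q=u_q-\sum_{j\le q-2}\tfrac{2j-q+2}{q-2}u_j$ (which fixes $u_1,\dots,u_{q-2}$, hence is triangular and invertible at each stage, settling your invertibility worry), and iterates it down to $P_3(u_1,u_2,u_3)=Q(u_2-u_1,u_3-u_1)$, the coefficients $\tfrac{2\ell+1}{2k+1}$ arising precisely from the accumulated rescalings $\tfrac{q}{q-2}$ as you predict. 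The only content your plan leaves unexecuted is exhibiting that substitution and verifying the single cancellation identity behind the recursion, which is the computational core the paper carries out; the concluding appeal to Lemma~\ref{duetre} is handled the same way in both arguments.
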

Note that the variable $v_1$ does not appear, after the change of variable.   An analogous version (not needed for our purposes) holds for $P_{2\ell}$ with $2\ell\geq 4$.

 \begin{proof}[Proof of Lemma \ref{rick}]
Let $q\geq 5  $ be an odd number.   
Let us 
make the linear change of variable  
  \begin{equation}\label{vacuum} 
\begin{aligned}
   v_{q-1}&=\sum_{j=1}^{q-1}(2j-q)u_j
%    \\
\qquad \text{and}  \qquad  v_q&=u_q-\sum_{j=1}^{ q-2}\frac{-q+2+2j}{q-2}u_j
\end{aligned}  \end{equation} 
which leaves the variables $u_1,\dots, u_{q-2}$ unchanged. We have the following recursive relation
  \begin{equation}\label{eqq} 
P_q(u_1,\dots, u_q):= 
Q(v_{q-1},v_q)+\frac{q}{q-2}P_{q-2}(u_1,\dots, u_{q-2}).
  \end{equation}

 To prove \eqref{eqq}, write  
  \[
P_q(u_1,\dots,u_q)=   \sum_{1\le j<k\le q-1}(q-2k+2j)Q(u_j,u_k)+\sum_{j\le q-1}(2j-q)Q(u_j, u_q).
  \]
Letting $v_{q-1}=\sum_{j\le q-1}(2j-q)u_j$, or  equivalently 
$
 u_{q-1}=\frac{1}{q-2}v_{q-1}-\sum_{j\le q-2}\frac{2j-q}{q-2}u_j,
$
we can  eliminate  $u_{q-1}$, getting  
\[\begin{aligned}
&P_q (u_1,u_2, \dots, u_{q-2}, u_{q-1}, u_q)
\\&= \sum_{j<k\le q-2}(q-2k+2j)Q(u_j,u_k)
\\&\quad +Q\Bigl(
\sum_{j\le q-2}(-q+2+2j)u_j\,,\,
\frac{1}{q-2}v_{q-1}-\sum_{k\le q-2}\frac{2k-q}{q-2}u_k
\Bigr)+ Q(v_{q-1}, u_q)=:(*).
\end{aligned}
\]
To conclude the computations, we  insert the change concerning $v_q$  in~\eqref{vacuum} an we get
\[\begin{aligned}
&(*)= Q\Bigl(v_{q-1} ,u_q-\sum_{j\le q-2}\frac{-q+2+2j}{q-2}u_j\Bigr)
\\& +\sum_{j<k\le q-2}
 \Bigl\{
 (q-2k+2j)+\frac{(-q+2+2j)(q-2k)}{q-2}
 +\frac{(2j-q)(-q+2+2k)}{q-2}\Bigr\}Q(u_j,u_k) 
  \\&  =
  Q(v_{q-1},v_q)+\frac{q}{q-2}\sum_{j<k\le q-2}(q-2-2(k-j))Q(u_j,u_k),
\end{aligned}\]
which is the desired identity \eqref{eqq}.

Next we iterate formula \eqref{eqq} starting from $q=2\ell+1$ and we get
\begin{equation*}
\begin{aligned}
P_{2\ell+1}(u_1, \dots, u_{2\ell+1})&= Q(v_{2\ell}, v_{2\ell+1}) +  \frac{2\ell+1}{2\ell-1} 
P_{2\ell-1}(u_1, \dots, u_{2\ell-1})
\\&=
 Q(v_{2\ell}, v_{2\ell+1}) +  \frac{2\ell+1}{2\ell-1}    Q(v_{2\ell-2}, v_{2\ell-1}) +
  \frac{2\ell+1}{2\ell-3}P_{2\ell-3}(u_1, \dots, u_{2\ell-3}) 
  \\& \vdots  
\end{aligned}
\end{equation*}
When we encounter $P_q$ with   $q=3$, the iteration stops, because
\[\begin{aligned}
P_3(u_1, u_2, u_3) :& =Q(u_1, u_2)+Q(u_2, u_3)-Q(u_1, u_3)=Q(u_1, u_2)+Q(u_2-u_1, u_3)
\\&
=Q(u_1, u_1+v_2)+Q(v_2, u_3)= Q(v_2, u_3-u_1)=Q(v_2, v_3),                                                          \end{aligned}
\]
where we applied  the change of variable $v_2=u_2-u_1$ and $v_3= u_3-u_1$, i.e.~\eqref{vacuum} with $q=3$.
 
 The proof of the lemma is concluded.
 \end{proof}

  \subsection{Construction of the inner cone}\label{homo} 
Using the higher-order envelopes of 
Theorem \ref{opp}, we can prove Theorem~\ref{conogelato}.

Let $C\subset \G$  be a convex set in a two-step Carnot group   $\G$ with group law~\eqref{machegruppo}. 
Let us consider a point  $(z,t) \in  C $. Let $p\in\N$ be such that Theorem~\ref{opp} holds true.  Assume that there is $\xi\in Z$ such that   $(z,t)\cdot(p\xi, 0)\in \Int(C)$.
 Define  the map
\begin{equation*}
 \Gamma_{(\z,\t)}(u_1, \dots, u_p):=e^{u_p \cdot X} \cdots e^{u_2\cdot X}\cdot e^{u_1\cdot X}(\z,\t),\quad \text{ for all $u_1, \dots, u_p,\z\in Z$ and $\t\in T$.}
\end{equation*}
Note that $\Gamma_{(\z,\t)}(u_1, \dots, u_p)=(\z,\t)\cdot \Gamma(u_1, \dots, u_p)$, where $\Gamma=\Gamma_{(0,0)}$ is the function introduced in  \eqref{eppi}.
In these notation, if the hypotheses of Theorem~\ref{conogelato} are satisfied,   we have $(\z,\t):=\Gamma_{(z,t)}(\xi,\dots, \xi)\in\Int C$. Let $\r>0$ so that $B ((\z,\t),\r)\subset \Int(C)$, where $B$ denotes the subRiemannian ball. By continuity, there is $\e>0$  and $\r>0$ so that,    if $ (z',t')\in
\G$ and 
$u_1,\dots, u_p\in Z$ satisfy
\begin{equation}
\label{unozero} 
 \max\{d((z,t), (z',t')), \abs{u_j-\xi}: j=1,\dots, p \}<\e,                                        
 \end{equation} 
then
\begin{equation}
 \label{unouno}\Gamma_{(z',t')}(u_1, \dots, u_p)\in B((\z,\t),\r)\subset \Int(C). 
\end{equation}

 We are now ready to give the proof of Theorem~\ref{conogelato}.  
 
 \begin{proof}[Proof of Theorem \ref{conogelato}]
\step{Step 1.} If $u_j $ and $(z',t')$ satisfy \eqref{unozero}, then for all nonnegative $\la_1,\dots, \la_p$ satisfying $\sum \la_j=p$, we have 
  \begin{equation*}
\Gamma_{(z',t')}(\la_1u_1,\dots, \la_p u_p)
\in B((\z,\t),\r)\subset \Int(C).
  \end{equation*}

To prove Step 1, 
denote by $d$ the subRiemannian distance and 
write first 
 \begin{equation*}
\begin{aligned}
& d\Big(  \Gamma_{(z',t')}(\la_1u_1,\dots, \la_p u_p),
\Gamma_{(z,t)}(\xi,\dots, \xi)\Big)
\\&\leq 
d\Big(  \Gamma_{(z',t')}(\la_1u_1,\dots, \la_p u_p),
\Gamma_{(z',t')}(\xi,\dots, \xi)\Big)
+
d\Big(  \Gamma_{(z',t')}(\xi,\dots, \xi),
\Gamma_{(z,t) }(\xi,\dots, \xi)\Big).
\end{aligned}
 \end{equation*}
The second term can be estimated with   $C (| z'-z|^{1/2}+|t'-t|^{1/2})$, 
where the constant $C$  depends on the vector $\xi$ appearing in the hypotheses.

By the esplicit form of $\Gamma$ described in~\eqref{eppi}, the first term can be evaluated by
\begin{equation*}
\begin{aligned}
&d\Big((z',t')\cdot \Gamma_{(0,0)}(\la_1 u_1\dots, \la_p u_p),   
(z',t')\cdot \Gamma_{(0,0)}(\la_1 \xi \dots, \la_p \xi)
\Big)
\\  &\leq C\Big( \Big| \sum_{j\leq p} \la_j u_j -p\xi  \Big|
 +
 \Big|  
 \sum_{j<k\leq p}Q(\la_j u_j, \la_k u_k) +Q\Big( \sum_{j\leq p} \la_j u_j, p\xi\Big)
 \Big|^{1/2}\Big).
\end{aligned}
\end{equation*}
The first term is easily evaluated:
$
 \Big|\sum_{j}\la_j u_j -p \xi \Big| =\Big|\sum_{j}\la_j (u_j -  \xi) \Big|\leq C\e
$. To estimate the term under square root, note that   for all $j,k$ we have by 
homogeneity and anti-symmetry, 
\begin{equation*}
\begin{aligned}
| Q(\la_ju_j, \la_k u_k)|=\la_j\la_k |Q(u_j-u_k, u_k)|\leq C|\xi|\cdot |u_j-u_k|\leq C \e.
\end{aligned}
\end{equation*}
An analogous argument can be used to see that  $\Big|Q\Big( \sum_{j\leq p} \la_j u_j, p\xi \Big)\Big|\leq C\e $, where again the constant $C$ depends on $|\xi|$, which is fixed.
This ends Step 1.

\step{Step 2.} We show that for all $\lambda\in\left]0,1\right]$ and for all  $(z',t') \in C$ with $d((z,t), (z',t')) < \e$ we have 
\[
\{\Gamma_{(z',t')}(\la u_1, \la u_2,\dots, \la u_p)  :\abs{u_j-\xi} <\e\quad\text{for all $j\in\{1,\dots,p\}$}\}\subset C. 
\]

To accomplish Step 2 observe first that  if $|u_1-\xi|<\e$, then 
$\Gamma_{(z',t')}(p 
u_1, 0, 0, \dots, 0)  \in C$ (applying  Step 1 with $\la_1=p$ and $\la_j=0$ for $j\geq 2$). Furthermore the points 
$(z',t') $ and $\Gamma_{(z',t')}(p 
u_1, 0, 0, \dots, 0)  $ are aligned. Therefore, we have $\Gamma_{(z',t')}(\la u_1,0,0,\dots, 0 )\in C$ for all $\la\in[0,p]$.

Next, look at the aligned points  \[\Gamma_{(z',t')}(\la u_1, 0, \dots, 0)\in C
                                   \text{ and }
                                   \Gamma_{(z',t')}\big(\la u_1, (p-\la) u_2, 0, \dots, 0\big).
                                  \]
   The second point belongs to $C$ by Step 1 and the points are aligned. Then     
   \[  \Gamma_{(z',t')}(\la u_1, su_2, 0, \dots, 0)\in C, \text{  for all $s\in[0, p-\la]$  .}
   \] 
   Taking $s=\la $ (which 
   is less than $  p- \la$), we discover that $\Gamma_{(z',t')}(\la u_1, \la u_2, 0, 
   \dots, 0)\in C$. 
   
Iterating the procedure, we  easily conclude that $\Gamma_{(z',t')}(\la u_1, \dots, \la u_p)\in C$ and  Step 2 is accomplished.

\step{Step 3.} We finalize the construction of the inner cone. Let $(z',t')\in C$ with $d((z,t),(z',t')
)<\e$. Then, by the previous steps,   denoting by $\ell_{(z',t')}$ the left translation,  
\begin{equation*}
\begin{aligned}
 C &\supseteq \big\{
 \Gamma_{(z',t')}(\la u_1, \la u_2, \dots, \la u_p): |u_j-\xi|<\e\;\forall j ,\;\la\in[0,1]
 \big\}
 \\&
 = \ell_{(z',t') } \Big\{ \Gamma (\la u_1, \dots, \la u_p): |u_j-\xi|<\e\;\forall j,\quad \la\in[0,1]\Big\}
 \\&
 = \ell_{(z',t') } \bigcup_{\la\in[0,1]} \delta_\la \Big\{\Gamma(u_1,\dots, u_p):|u_j-\xi|<\e \Big\}  
 \supseteq \quad\text{by Theorem \ref{opp}}
 \\&
 \supseteq
 \ell_{(z',t') } \bigcup_{\la\in[0,1]} \delta_\la B_{\Eucl}(\Gamma(\xi,\dots, \xi), c_0\e^2).
\end{aligned}
\end{equation*}
Let $\delta_0$ be such that
$B_{\Eucl}(\Gamma(\xi,\dots, \xi), c_0\e^2) \supseteq B(\Gamma(\xi,\dots, \xi), \delta_0)$. Therefore  
\begin{equation*}
 \begin{aligned}  C&   =\ell_{(z',t') } \bigcup_{\la\in[0,1]}   B(\Gamma(\la \xi,\dots, \la\xi  ), \delta_0 \la )
% \\& 
=\bigcup_{\la\in[0,1]}B\Big( (z',t')\cdot(\lambda p\xi , 0),  \delta_0  \la \Big),
\end{aligned}
\end{equation*}
which is the required open cone. We used the known properties of the natural dilations $\delta_\la(z,t):=(\la z,\la^2 t)$ of the group. Namely, $\delta_\la B((z,t), r)=B(\delta_\la(z,t), \lambda r)$, for all $\la, r>0$ and $(z,t)\in\G$.

\step{Step 4.} In Step 3, we worked with  
$(z',t')\in C$.
To conclude, it suffices to approximate any desired $(z',t')\in \p C$ with $d((z',t'), (z,t)) < \e$ with a sequence  
$(z'_n, t'_n)\in C$ and the construction is concluded.  \end{proof}

 \section{Monotone sets and intrinsic epigraphs} 
   \label{custom74}

We start this section giving the details of the proof of the classification of monotone sets in M\'etivier groups. The idea of the proof  has been suggested to the author by R.~Monti. 
 All we need to know about M\'etivier groups is that for all $ P,Q$ in such kind of group $ \G$ there is a Heisenberg subgroup   $ H_0 $ such that the left coset $P\cdot  H_0=:H $ contains~$P$ and~$Q$.   A Heisenberg subgroup of a two-step group~\eqref{machegruppo} is a three-dimensional subgroup of the form $\{\Span\{(z,0), (\z,0), (0, Q(z,\z))\}$ for suitable independent $z,\z\in Z$ with $Q(z,\z)\neq 0$. See \cite{ArenaCarusoMonti12,MontanariMorbidelli15} and the references therein.
\begin{proposition}\label{decimo} 
 If $E\subset\G$ is a precisely monotone set in a M\'etivier group, and if $\p E\varsubsetneq \G$, then 
   either   $  E\in\{\varnothing, \G\} $, or   there is an open  half-space $
 \Omega\subset\G$ such that $\Omega \subset E\subset \ol\Omega$.
\end{proposition}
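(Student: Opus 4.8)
\textbf{Proof plan for Proposition~\ref{decimo}.}
The plan is to exploit the key structural feature of M\'etivier groups recalled just above the statement: any two points lie in a common left coset of a fixed-model Heisenberg subgroup, on which the restriction of the monotone set~$E$ is again monotone in the Heisenberg sense, so that the classification of Cheeger--Kleiner~\cite{CheegerKleiner10} applies inside each such coset. First I would dispose of the degenerate case: if $\Int(E)=\varnothing$ then, since the complement is also convex and the two interiors are disjoint open sets whose union is dense (this is where $\p E\subsetneqq\G$ enters, via Theorem~\ref{conogelato} applied to $E$ or to $E^c$ to propagate interior points along horizontal segments), one gets $E\in\{\varnothing,\G\}$; symmetrically if $\Int(E^c)=\varnothing$. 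So from now on I may fix $P_0\in\Int(E)$ and $P_1\in\Int(E^c)$.

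Next I would reduce the global problem to a one-dimensional/Heisenberg statement. Take any horizontal line $\ell$; I claim $\ell\cap\p E$ is either empty or a single point. Indeed pick two points $Q,Q'$ of $\ell$; they lie in a common coset $H=R\cdot H_0$ of a Heisenberg subgroup, and $\ell\subset H$ because $\ell$ is a horizontal line through two points of~$H$ (the horizontal plane structure of $H$ contains the direction of $\ell$). The set $E\cap H$, read in the Heisenberg coordinates of $H$, is monotone (convexity along horizontal lines of $H$ for $E$ and for $E^c$ is inherited from convexity in~$\G$, since horizontal lines of $H$ are horizontal lines of~$\G$). By Cheeger--Kleiner, $E\cap H$ lies between a half-space $\Pi_H$ of $H$ and its closure; hence $\ell$ meets $\p E$ in at most one point, and the ``signs'' of $E$ on the two rays of $\ell\setminus\p E$ are opposite. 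This gives, for \emph{every} horizontal line, a clean dichotomy.

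Then I would build the half-space. Using Theorem~\ref{conogelato} (the cone property) I know $\Int(E)$ and $\Int(E^c)$ are open, and by the line dichotomy above each horizontal line meets $\p E$ at most once; moreover along any horizontal line through a point of $\Int(E)$ and a point of $\Int(E^c)$ there is exactly one boundary point, with $\Int(E)$ on one side and $\Int(E^c)$ on the other. I would fix the coset $H^*$ containing $P_0$ and $P_1$; there $\p E\cap H^*$ is (an affine piece of) a Euclidean plane $\Sigma^*$ of $H^*$, by Cheeger--Kleiner. The goal is to show that the affine hyperplane $\Sigma$ of $\G$ extending $\Sigma^*$ works globally: concretely, that $\p E=\Sigma$ and the two open half-spaces are $\Int(E)$, $\Int(E^c)$. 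To do this, take an arbitrary point $R$; join $R$ to a point of $\Int(E)$ and a point of $\Int(E^c)$ by horizontal segments (possible since both interiors are nonempty and one can always reach any point from a given interior point along at most a few horizontal segments, the group being step two and bracket-generating — or, more cheaply, use that the interiors, being convex with respect to horizontal lines and nonempty, are ``horizontally connected'' to everything). Comparing with the Heisenberg classification in each of the finitely many cosets traversed, one shows $R\in\Int(E)$, $R\in\p E$, or $R\in\Int(E^c)$ exactly according to which side of $\Sigma$ it lies on; the consistency of these local pictures across overlapping cosets is forced because they must agree on the horizontal lines common to two cosets. This yields $\Omega:=\Int(E)$ open, $\Omega\subset E\subset\ol\Omega$, and $\Omega$ a half-space.

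\textbf{Main obstacle.} The delicate point is the \emph{globalization}: Cheeger--Kleiner gives a half-space \emph{inside each Heisenberg coset}, but a priori these local half-spaces could fail to fit together into one global affine hyperplane of~$\G$. The crux is to prove that the family of local boundary planes $\{\p E\cap H : H \text{ a Heisenberg coset}\}$ is the trace of a single affine hyperplane. I expect the cleanest route is: (i) show $\Int(E)$ is convex in the \emph{Euclidean} sense — for this use the cone property (Theorem~\ref{conogelato}) to see that $\Int(E)$ is open and that through any two of its points the full horizontal line segment lies in $\Int(E)$, then upgrade horizontal convexity to Euclidean convexity using that in a step-two group any Euclidean segment is a finite concatenation of horizontal segments whose intermediate vertices can be taken arbitrarily close, together with openness; (ii) apply the same to $\Int(E^c)$; (iii) two disjoint open convex sets in $\R^N$ whose closures cover $\G$ must be complementary open half-spaces. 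Step (i), making horizontal convexity of an \emph{open} set force Euclidean convexity in the presence of the cone property, is the technical heart and the step I would budget the most care for.
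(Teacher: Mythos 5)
Your overall skeleton matches the paper's: use the M\'etivier condition to place any two points in a left coset $H=P\cdot H_0$ of a Heisenberg subgroup, observe that $E\cap H$ is monotone there, invoke the Cheeger--Kleiner classification inside $H$, and reduce to the Euclidean statement (your step (iii) is exactly \cite[Lemma 4.2]{CheegerKleiner10}, which the paper cites). But the step you yourself flag as the technical heart --- item (i), upgrading horizontal convexity of the open set $\Int(E)$ to Euclidean convexity ``using that any Euclidean segment is a finite concatenation of horizontal segments whose intermediate vertices can be taken arbitrarily close, together with openness'' --- is not a valid argument and is not needed. Concatenating horizontal segments with endpoints in a horizontally convex open set produces a horizontal polygonal path inside the set, but gives no control on the Euclidean segment itself: horizontal convexity plus openness does not imply Euclidean convexity in general, and the paper explicitly warns that one cannot claim monotone sets are Euclidean convex (see the example $\{y\geq 0\}\setminus\{(x,0,0)\}$ preceding the proof). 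Your step (i) as stated would fail.

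The correct route, for which you already have all the ingredients, is much more direct: given $P,Q\in\Int(E)$, the coset $H\ni P,Q$ is an \emph{affine} three-dimensional subspace of $\G$ (left translation in a step-two group in exponential coordinates is an affine map), so the Euclidean segment $[P,Q]$ lies entirely in $H$; since $P,Q\in\Int_H(E\cap H)$ and the latter is, by Cheeger--Kleiner, either an open half-space of $H$ or all of $H$ --- in either case Euclidean convex --- one gets $[P,Q]\subset E$ immediately, and a small translation of the pair $(P,Q)$ upgrades this to $[P,Q]\subset\Int(E)$. The same coset argument applied to $P,Q\in E^c$ (note the paper works with $\ol{E^c}=(\Int E)^c$ rather than $\Int(E^c)$, handling separately the case $E^c\cap H=H$) gives convexity of $\ol{E^c}$, after which \cite[Lemma 4.2]{CheegerKleiner10} finishes the proof. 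Your third paragraph's globalization by ``fitting together local boundary planes across overlapping cosets'' is likewise unnecessary once both complementary sets are known to be Euclidean convex.
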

Unfortunately we can not claim that    horizontally monotone sets in $\H$ are  Euclidean convex. Otherwise, the proof below would be much shorter. As an example, consider the set 
\begin{equation*}
 E=\{(x,y,t)\in\H: y\geq 0\}\setminus\{(x,0,0): x\in\R\}.
\end{equation*}
 \begin{proof} 
  Since $\p E\neq \G$, we have $\Int(E)\cup \Int(E^c)\neq \varnothing$. Assume that $\Int(E)\neq\varnothing$. Write then~$\G$ as   a disjoint union of $\Int E$ and $(\Int(E))^c=\ol{E^c}$. To prove the 
 proposition, 
 it suffices to show that   the nonempty set    $\Int ( E)$ is monotone in the Euclidean sense, i.e.~that $\Int(E)$ and $\ol{E^c}$ are convex in the usual sense. Then the statement follows from \cite[Lemma 4.2]{CheegerKleiner10}.
 
 Let $P$ and 
  $Q\in \Int E$ and let   $H :=P\cdot H_0 $  be the left coset of a Heisenberg subgroup    of~$\G$ containing~$P$ and~$Q$ (which always 
  exists, by the M\'etivier condition). Note that $  
  E\cap H $ is monotone in~$H$ and also $P,Q\in \Int_{H} (E\cap H)$,  which,  by 
  \cite{CheegerKleiner10},  is    either   an  open
  halfspace in~$H $,   or the whole~$H$    (we denoted by $\Int_{H}$ the interior in the induced topology on $H
  $). Thus, the segment $[P,Q]$ which connects~$P$ and~$Q$ is 
  contained in $\Int_{H} (E\cap H)$ which is a subset of  $ E\cap H$. Ultimately, $[P, Q]
  \subset E$. Applying the   same 
  argument to all pairs $P',Q'$ with~$P'$ close to~$P$ and~$Q'=P'+(Q-P)$ close to $Q$,  we conclude that   $
  [P, Q]
  \subset 
  \Int E$.
  
In order to show that the  set   $\ol{ E^c}$   
is Euclidean convex, assume that it is nonempty,  consider 
$P,Q\in  {E^c}$ and    let $H=P\cdot H_0$ be the left coset of a    Heisenberg subgroup $H_0$  such that $P,Q\in H$.   Since    $E^c\cap H  $ is monotone in $H$,   either $E^c\cap H=H$, and then $[P,Q]\subset E^c$, or    there is an open three-dimensional  half-space $\Sigma_0\subset H$ such that $ 
\Sigma_0\subset E^c\cap H \subset  \Sigma $, where $\Sigma \subset H$  is the  closure of~$\Sigma_0$.  Taking the closure in $\G$, we see that
$\ol{E^c}\cap H=\Sigma$ and thus $[P, Q]\subset   \Sigma =  
\ol{E^c}\cap H 
\subset \ol {E^c}$.

Finally, approximating points $P,Q\in \ol{E^c}$ with points in $E^c$, we get the Euclidean convexity of $\ol{E^c}$.
\end{proof}

%\color{red} 
Next we prove that if $\G=\H^n$ is the $n$-th Heisenberg group,  then the assumption $\p E\varsubsetneq \H^n$ can be removed. Recall that $\H^n=\C^n\times\R  \ni(z,t)= (x+iy,t)$ is equipped with the law
\[
 (z,t)\cdot(\z,\t):=(z+\z, t+\t+ 2\Im(z\cdot\ol \z)),\quad\text {with $z\cdot\ol\z=\sum_{j=1}^n z_j\ol \z_j$}.
\]

\begin{proposition}
 \label{vameglio} If $E\subset\H^n$ is a precisely monotone set  and $\varnothing\neq E\neq \H^n$, then there is an open  half-space $\Omega\subset\H^n$ such that $\Omega \subset E\subset \ol\Omega$.
\end{proposition}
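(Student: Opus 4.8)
The plan is to derive Proposition~\ref{vameglio} from Proposition~\ref{decimo}. First, $\H^n$ is a M\'etivier group: given $z\in\C^n\setminus\{0\}$ and $t\in\R$ one has $2\Im(z\cdot\ol\zeta)=t$ for $\zeta=-\tfrac{t}{2|z|^2}\,iz$. Hence Proposition~\ref{decimo} already gives the desired conclusion \emph{as soon as} $\partial E\subsetneq\H^n$. So the whole content of Proposition~\ref{vameglio} reduces to the claim that a precisely monotone set $E$ with $\varnothing\neq E\neq\H^n$ automatically satisfies $\partial E\subsetneq\H^n$, that is, $\Int(E)\cup\Int(E^c)\neq\varnothing$.

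To prove this claim I would argue by contradiction: assume $\Int(E)=\Int(E^c)=\varnothing$, so $E$ and $E^c$ are both dense in $\H^n$. I would then invoke the following lemma on horizontally convex sets, to be isolated and proved on its own: \emph{a horizontally convex subset of a Carnot group with empty interior is Lebesgue negligible} (equivalently, a horizontally convex set of positive measure has nonempty interior). Granting it, $E$ and $E^c$ are both null, which is absurd since they cover $\H^n$; hence $\partial E\subsetneq\H^n$ and Proposition~\ref{decimo} applies, the hypotheses $E\neq\varnothing$, $E\neq\H^n$ excluding the trivial alternative. One uses here that a precisely monotone set is Lebesgue measurable, which can be taken as part of the standing hypotheses or deduced from the structure of horizontally convex sets (see \cite{MontiRickly05,CalogeroCarcanoPini07}).

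It remains to sketch the lemma. Let $C$ be horizontally convex with $|C|>0$; choose a Lebesgue density point $P$ of $C$ and left-translate so that $P=0$, the goal being $0\in\Int(C)$. The key structural remark is that in exponential coordinates the integral curves of the left-invariant horizontal fields $X_1,\dots,X_m$ are straight Euclidean segments whose directions at the origin span the horizontal plane $H_0=\{(\z,0):\z\in Z\}$. Since $C$ meets each of these $m$ lines in an interval, a Fubini-type slicing argument together with one-dimensional convexity upgrades ``density $1$ at $0$'' to ``$C$ contains a relatively open piece of $H_0$''. Because $|C|>0$, $C$ is not contained in $H_0$, and as before one may pick $Q^*\in C\setminus H_0$ close to $H_0$; by the M\'etivier condition $Q^*$ is horizontally aligned with a point of that relatively open piece, so horizontal convexity forces $C$ to contain a segment transverse to $H_0$, and a last application of horizontal convexity (joining nearby points of the open piece to nearby points of the transverse segment) produces a genuine Euclidean ball inside $C$.

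The main obstacle is exactly this lemma. The difficulty is that the directions along which $C$ is \emph{a priori} convex span only the horizontal plane $H_P$, of codimension $\dim T$, and this plane rotates with the base point; so a single slicing never reaches the center of the group and one must iterate, checking at each step that the set stays ``thick''. Specializing to $\H^n$, whose center is one-dimensional, is what makes this iteration finite and the bookkeeping manageable. (Alternatively, if one quotes from the literature on $h$-convex sets that positive measure forces nonempty interior, Proposition~\ref{vameglio} is an immediate corollary of Proposition~\ref{decimo}.)
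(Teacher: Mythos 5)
Your opening reduction is correct and matches the paper's: $\H^n$ is M\'etivier (your formula for $\zeta$ is right), so by Proposition~\ref{decimo} everything hinges on showing $\partial E\subsetneq\H^n$, i.e. $\Int(E)\cup\Int(E^c)\neq\varnothing$. The gap is that you discharge this entire burden onto the lemma ``a horizontally convex set with empty interior is Lebesgue negligible,'' which you neither prove nor locate in the literature (the references \cite{MontiRickly05,CalogeroCarcanoPini07} concern geodetically convex sets and twisted hulls, not this statement). Your sketch of the lemma breaks precisely at the decisive step: the horizontal plane $H_0$ has codimension $\dim T\geq 1$, hence is Lebesgue null, so the hypothesis that $0$ is a density point of $C$ carries \emph{no} information about $C\cap H_0$, and a Fubini slicing controls only almost every line of a foliation --- never the null family of lines lying inside $H_0$. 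So the claimed upgrade from ``density $1$ at $0$'' to ``$C$ contains a relatively open piece of $H_0$'' does not follow, and the subsequent ``iterate, checking the set stays thick'' is exactly the hard multi-exponential propagation that Section~\ref{sapporo} performs --- but Theorem~\ref{conogelato} cannot be invoked here, because it presupposes a point of $\Int(C)$, which is what you are trying to produce. There is also a measurability issue lurking (a priori a precisely monotone set need not be measurable), which your argument quietly assumes away.

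For comparison, the paper fills this step by a completely different, and elementary, device: induction on $n$, with the Cheeger--Kleiner classification in $\H^1$ as base case. Slicing $\H^{n+1}$ by the cosets $H_{\z_{n+1}}=\{(z,\z_{n+1},t)\}$, each isomorphic to $\H^n$, the inductive hypothesis puts an open $(2n+1)$-dimensional half-space of $E$ inside two distinct slices; a short case analysis (Step~1 of the paper's proof) produces a pair of horizontally aligned points, one in each half-space, and sweeping the connecting segments over a neighborhood yields a codimension-one surface $\Sigma\subset E$ transversal to some line. Proposition~\ref{trasversale} then forces $\Int(E)\neq\varnothing$. This route never needs any measure theory. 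If you want to keep your architecture, you must either prove your lemma in full (which is a genuinely nontrivial result about h-convex sets, not a routine density-point argument) or replace it by an argument of the inductive type above.
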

 Unfortunately we have not been able to get a generalization of Proposition~\ref{vameglio} to the setting of M\'etivier groups.
\begin{proof}
 \step{Step 1.} We start by observing  that given two open half-spaces~$\Omega_1$
 and~$\Omega_2\subset  \H^n$ there are two aligned points $P_1\in \Omega_1$ and $P_2\in \Omega_2$. To check this remark,  
 if $\p \Omega_1$ is a characteristic plane,
 then, left-translating its characteristic point 
  to the origin, we must have 
  $\Omega_1= \{ t>0\}  $ or $\Omega_1=\{t<0\}$; if instead  
  $\Omega_1$ has noncharacteristic boundary,  up to left translations it must be  defined 
  by the    inequality $\sum_{j=1}^n (a_j x_j +b_j y_j)>0$ for some $a, b\in\R^n$.

 In the first case, starting from the fact that any halfspace $\Omega_2$ must contain points $(\z,\t)$ with 
 $\z\neq 0$, it is
 easy to check that any such point $(\z,\tau) $ is aligned with a point with $t>0$.  This can be seen by 
 looking at the parametrized line $\gamma(s):= (\z,\t)\cdot (-is  \z,0)$ with large $s$.
  The case $\Omega_1=\{t<0\}$ is analogous (look at $\gamma(s)$ with $s\to -\infty$). 
 
 In the second case, given $(\z,\t) \in \H$, all points  
 $
  \gamma(s)=(\xi+i\eta,\tau)\cdot(s(0+i  b),0) 
 $
belong to $\Omega_1$ for large $s$.

\step{Step 2.} 
We show the proposition by induction. Assume that the statement is true for $\H^n$. Let $E\subset \H^{n+1}$ be a monotone set with $\varnothing\neq E\neq \H^{n+1}$.
By  Proposition~\ref{vameglio}, it suffices to show that $\Int(E)\cup \Int(E^c)\neq\varnothing$.

For any  $z_{n+1}\in\C$  define   
$
H_{z_{n+1}}:=\{(z, z_{n+1}, t): (z,t)\in\H^n\}
$.
Assume without loss of generality that $H_0\cap E\neq \varnothing$. Then by inductive hypothesis
$H_0\cap E\supset \Omega_1$, where~$\Omega_1$ is an 
open $(2n+1)$-dimensional halfspace  contained in $H_0$.

There are two cases.
Either for all  $\z_{n+1}\in\C\setminus\{0\}$  we have  $  H_{\z_{n+1}}\subset E^c $, or  there is 
$\z_{n+1}\in\C\setminus\{0\}$ such that the set~$E\cap H_{\z_{n+1}}$ is nonempty and then it contains an open half space $\Omega_2\subset H_{\z_{n+1}}$.
The first case can not occour, because it would contradict the convexity of $E^c$. Then we are always in the second case.
By Step 1 we can find two points $P=(z, 0 , t)\in\Omega_1  $
and $Q=(z,0,t)\cdot (\z, \z_{n+1}, 0)\in\Omega_2$, where both $\Omega_1$ and $\Omega_2$ are contained in~$  E$.  

Let $\gamma(s)=(z,0,t)\cdot (s\z,s\z_{n+1},0)$ be a line connecting $\gamma(0)=P$ and $\gamma(1)=Q$. By convexity, $\gamma[0,1]\subset E$. Taking points $(z',t') $ in a small open neighborhood $G$ of $(z,t)\in\H_n$, 
we can consider the set of points
\[
\Sigma:= \{  (z',0,t')\cdot (s\z,s\z_{n+1} ,0) : s\in \left]0,1\right[  \quad 
(z',t')\in G\}.
\]
It is clear that  $\Sigma$ is  a $(2n+2)$-dimensional surface  contained in $E$. To conclude, take any point $P \in\H^{n+1}$ aligned with some point   $Q\in \Sigma $ and such that the segment $[P,Q]$ is transversal to $\Sigma$ at~$Q$. If $P\notin \p E$ we, immediately get $\p E\neq \H^{n+1}$ and Step 2 is concluded. If instead $P\in\p E$, then we can apply 
Proposition~\ref{trasversale} to  conclude that $\Int(E)\neq \varnothing$.  
\end{proof}

Next, we describe some properties  of monotone sets in a Carnot group of step two which follow from the results of the previous section.

First of all, as a corollary of Theorem~\ref{opp} we get  the generalization of \cite[Proposition~4.6]{CheegerKleiner10}   to general two-step Carnot groups.  
   
 \begin{proposition}\label{trasversale} 
Let $E\subset \G$ be a monotone set in a two step Carnot group~$G$ with law~\eqref{machegruppo}.  If a point $(z,t)$ belongs to a line $\ell$ and a surface 
   $\Sigma\subset E$ intersects transversally the line $\ell$ at a  second different point 
  $(\z,\t)\in \ell $,  then:
  \begin{enumerate}[nosep,label=(\roman*)]
   \item \label{unnis} If $(z,t)\in \ol  E$, then the open segment  $\left](z,t),(\z,\t)\right[\subset\ell$
   is  contained in $\Int (E)$.
   \item If $(z,t) \in E^c$, then the connected component of $\ell\setminus\{(\z,\t)\}$ 
   not containing $(z,t)$ is contained in $\Int (E)$. 
  \end{enumerate}
 \end{proposition}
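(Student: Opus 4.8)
The plan is to leverage the cone construction of Theorem~\ref{conogelato}, applied to both $E$ and its complement $E^c$ (which are both convex by the definition of monotone set), using the transversal surface $\Sigma$ in place of the hypothesis $Q\in\Int(C)$. Recall the remark immediately following Theorem~\ref{conogelato}: an analogous statement holds when, instead of assuming the endpoint lies in the interior, one only assumes that a surface containing $Q$ and transversal to $\ell$ is contained in the set. This is precisely the situation here, so one first checks that the proof of Theorem~\ref{conogelato} carries over in that variant — the only change being that in the construction of the inner point $(\z,\t):=\Gamma_{(z,t)}(\xi,\dots,\xi)$, one instead uses the transversality of $\Sigma$ at $(\z,\t)$ together with the quadratic openness of $\Gamma$ from Theorem~\ref{opp} to conclude that the image of a small ball under $\Gamma_{(z',t')}(\lambda\cdot)$ still sweeps out a full subRiemannian cone inside $E$, because the transversal surface $\Sigma$ provides the missing ``transverse directions'' that $\Int(C)$ provided before.

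For part~\ref{unnis}, I would argue as follows. Parametrize $\ell$ by $\gamma$ with $\gamma(0)=(z,t)\in\ol E$ and $\gamma(1)=(\z,\t)\in\Sigma$. Pick $\xi$ so that $\gamma(1)=(z,t)\cdot(\xi,0)$ (after replacing $\xi$ by $\xi/p$ as in Section~\ref{homo}). Applying the surface-version of Theorem~\ref{conogelato} to the convex set $C=E$ gives an $\e>0$ and an open truncated cone $\bigcup_{0<s\le 1}B((z',t')\cdot(s\xi,0),\e s)\subset E$ for all $(z',t')\in B((z,t),\e)\cap\ol E$. Taking $(z',t')=(z,t)$ shows that $\gamma(s)\in\Int(E)$ for all $0<s\le 1$, which is exactly the open segment $\left](z,t),(\z,\t)\right[$, plus the endpoint $(\z,\t)$ itself, lying in $\Int(E)$. (Strictly, to reach all the way to $s$ just below $1$ one uses that $(\z,\t)\in\Sigma\subset E$ and that the cone is open; a small adjustment of $\xi$ and $\e$ handles the endpoint region.)

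For part~(ii), now apply the same surface-version of Theorem~\ref{conogelato}, but to the convex set $C=E^c$, starting from a point $(z,t)\in E^c$ on $\ell$. Here the surface $\Sigma\subset E$ is on the ``other side'': the point $(\z,\t)\in\ell$ is a boundary point separating $(z,t)$ from the far component of $\ell\setminus\{(\z,\t)\}$. The key observation is that, because $\Sigma$ is transversal to $\ell$ at $(\z,\t)$, the point $(\z,\t)$ cannot be in $\Int(E^c)$, so a cone with vertex near $(\z,\t)$ opening in the direction \emph{away} from $(z,t)$ cannot be contained in $E^c$; instead, combining the cone property for $E$ (from part~(i) applied at the appropriate nearby points) with convexity of $E$, every point of the far component of $\ell\setminus\{(\z,\t)\}$ is seen to be an interior point of $E$. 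Concretely: by part~\ref{unnis}, points just beyond $(\z,\t)$ on the far ray are in $\Int(E)$; then given any point $R$ further out on that ray, one uses convexity of $E^c$ to derive a contradiction from $R\in E^c$ (the segment from $R$ back through $(\z,\t)$-region would force interior points of $E$ into $E^c$), so $R\in E$; a final cone argument around $R$ upgrades this to $R\in\Int(E)$.

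The main obstacle is the first paragraph: carefully verifying that the proof of Theorem~\ref{conogelato} — in particular Steps~1--4 in Section~\ref{homo} — goes through verbatim when $\Int(C)$ is replaced by the hypothesis ``$\Sigma\subset C$ with $\Sigma$ transversal to $\ell$ at $(\z,\t)$''. The delicate point is that in the original proof the ball $B((\z,\t),\r)\subset\Int(C)$ is used to absorb the error terms coming from the quadratic openness of $\Gamma$ and from moving the base point $(z',t')$; replacing this solid ball by a codimension-one transversal surface requires that the openness of $\Gamma_{(z',t')}(\lambda\cdot)$ at $(\xi,\dots,\xi)$ fills in exactly the directions complementary to $\Sigma$, uniformly in $(z',t')$ near $(z,t)$ and in $\lambda\in\left]0,1\right]$. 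Once this transversal version of the cone theorem is in hand, parts~\ref{unnis} and~(ii) are short deductions via convexity and translation-invariance as sketched above.
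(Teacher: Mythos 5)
Your proposal matches the paper's own treatment: the paper proves part~\ref{unnis} precisely by invoking the surface-transversal variant of Theorem~\ref{conogelato} (equivalently, the Cheeger--Kleiner Proposition~4.6 argument with the two-fold exponential map replaced by the $p$-fold map $\Gamma$ and Theorem~\ref{opp}), and disposes of part~(ii) by the same convexity-of-$E^c$ argument you sketch. The paper gives no more detail than you do on the point you flag as the main obstacle (running the cone construction with the transversal surface $\Sigma$ in place of the ball $B((\z,\t),\r)$), so the two arguments are essentially identical in both route and level of rigor.
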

 The first statement \ref{unnis} is a qualitative version of Theorem~\ref{conogelato} (it can also be proved by following the argument in \cite[Prop 4.6]{CheegerKleiner10} with the advice of changing   
 the map $Z\times Z\ni(u_1, u_2)\mapsto \Gamma_P(u_1, u_2)$ with the map   $Z^p\ni (u_1, \dots, u_p)\mapsto \Gamma_P(u_1, \dots, u_p)$  and using Theorem~\ref{opp}.  
As observed in \cite{CheegerKleiner10}, statement~\ref{unnis} holds for a merely  convex set $E$.
The second statement can be proved easily arguing as in \cite{CheegerKleiner10}.

Then we have the generalization of the following lemmas to   step 2 Carnot  groups $(\G, \cdot)$.
\begin{lemma}[compare \cite{CheegerKleiner10}, Lemma 4.8]\label{linea} Let 
$(\G, \cdot)$ be a two step Carnot group with law~\eqref{machegruppo}.
  If $(z,t)$ and $(\z,\t)=(z,t)\cdot (u,0)\in\G $ are   aligned points with $u\neq 0$
  and both belong to $\p E$, then the whole line  connecting them is contained in $\p E$.
 \end{lemma}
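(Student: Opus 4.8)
The plan is to exploit the full strength of Proposition~\ref{trasversale}, applied in both directions along the line $\ell$ connecting $P=(z,t)$ and $(\z,\t)=(z,t)\cdot(u,0)$. Suppose, for contradiction, that some point $R$ of $\ell$ lies outside $\p E$, say $R\in\Int(E)$ (the case $R\in\Int(E^c)$ being symmetric, using that $E^c$ is monotone). Since $\Int(E)$ is open, a small surface $\Sigma$ through $R$, transversal to $\ell$, is contained in $E$. Now I would feed this $\Sigma$ into Proposition~\ref{trasversale}, taking as the ``boundary point'' first $P$ and then $(\z,\t)$: since both $P,(\z,\t)\in\p E\subset\ol E$, statement~\ref{unnis} gives that the open segments $\left]P,R\right[$ and $\left](\z,\t),R\right[$ are contained in $\Int(E)$. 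Depending on the position of $R$ on $\ell$ relative to $P$ and $(\z,\t)$, one of these open segments contains either $P$ or $(\z,\t)$ in its interior — so that point lies in $\Int(E)$, contradicting its membership in $\p E$. If $R$ happens to lie strictly between $P$ and $(\z,\t)$, then one instead observes that $R$ can be perturbed slightly along $\ell$ past $P$ (or past $(\z,\t)$) while staying in the open set $\Int(E)$, and repeats the argument; equivalently, one picks $R$ from the start to be not between $P$ and $(\z,\t)$, which is possible because if every point of $\ell\setminus\p E$ were between $P$ and $(\z,\t)$ then in particular the two rays of $\ell$ beyond $P$ and beyond $(\z,\t)$ would lie in $\p E$, and we could relabel.

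More cleanly, I would organize it as follows. Set $R$ to be any point of $\ell$ with $R\notin\p E$; then $R\in\Int(E)$ or $R\in\Int(E^c)$, and by replacing $E$ with $E^c$ (still monotone, with the same boundary, and $P,(\z,\t)$ still on its boundary) we may assume $R\in\Int(E)$. Choose coordinates on $\ell$ so that $R$ corresponds to parameter $s_R$; since $R\in\Int(E)$ pick a transversal surface $\Sigma\subset\Int(E)$ through $R$. Apply Proposition~\ref{trasversale}\ref{unnis} with boundary point $P$ and transversal intersection point $R$: the open segment $\left]P,R\right[\subset\Int(E)$. Likewise with boundary point $(\z,\t)$: $\left](\z,\t),R\right[\subset\Int(E)$. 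The union $\left]P,R\right[\cup\{R\}\cup\left](\z,\t),R\right[$ is an open sub-arc of $\ell$ whose closure contains both $P$ and $(\z,\t)$; hence it is exactly the open segment $\left]P,(\z,\t)\right[$ together with possibly more, and in any configuration it is an open (in $\ell$) connected neighbourhood of $R$ that, if $R\notin\left]P,(\z,\t)\right[$, engulfs one of the endpoints — contradiction. If $R\in\left]P,(\z,\t)\right[$, I would note that then the whole open segment $\left]P,(\z,\t)\right[\subset\Int(E)$, so $P$ is a boundary point of $E$ that is a limit of interior points along $\ell$ from one side; combining with Proposition~\ref{trasversale}\ref{unnis} applied with a transversal surface through a point slightly beyond $(\z,\t)$ — which exists once we know such a point is interior, again by the ray statement (ii) — forces a point beyond $P$ into $\Int(E)$, and an analogous argument on the $E^c$ side handles the remaining ray, yielding $\ell\setminus\{P,(\z,\t)\}\subset\Int(E)\cup\Int(E^c)$ and thus again contradicting $P,(\z,\t)\in\p E$ after one more application of the proposition.

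The main obstacle I anticipate is purely bookkeeping: correctly handling the three positional cases for $R$ on the line $\ell$ and making sure the transversality hypothesis of Proposition~\ref{trasversale} is genuinely available (i.e.\ that a small piece of $\Int(E)$ near an interior point really does contain a surface meeting $\ell$ transversally — which is immediate, since $\Int(E)$ is open and we may take a flat disc transverse to $\ell$). There is no hard analysis here; the content is entirely in Proposition~\ref{trasversale}, and the lemma is a soft topological consequence. One should just be careful that when we ``relabel'' or ``replace $E$ by $E^c$'' the hypotheses (monotone set, $P$ and $(\z,\t)$ aligned via $u\neq0$, both on the common boundary $\p E=\p E^c$) are preserved, which they manifestly are. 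I would therefore expect the final write-up to be short: a one-paragraph argument by contradiction invoking Proposition~\ref{trasversale}\ref{unnis} twice, once from each endpoint, plus a sentence dispatching the symmetric $E^c$ case.
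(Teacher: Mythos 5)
Your overall strategy --- derive the lemma as a soft consequence of Proposition~\ref{trasversale} by supposing some $R\in\ell$ lies in $\Int(E)$ (after possibly swapping $E$ and $E^c$) and applying part~\ref{unnis} from each of the two given boundary points toward a small transversal disc through $R$ --- is the intended one; the paper gives no details and simply defers to Lemma~4.8 of \cite{CheegerKleiner10}, whose proof runs along these lines. Your treatment of the case where $R$ lies outside the segment $[P,(\z,\t)]$ is correct: one of the two applications of~\ref{unnis} places a boundary point inside an open segment contained in $\Int(E)$, a contradiction.

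The gap is in the case $R\in\left]P,(\z,\t)\right[$. There the two applications of~\ref{unnis} only give $\left]P,(\z,\t)\right[\subset\Int(E)$, which is not yet absurd, and your proposed continuation does not close it: you invoke part~(ii) to claim that a point slightly beyond $(\z,\t)$ is interior, but part~(ii) requires a point of $\ell$ lying in $E^c$, which you have not produced and which in this configuration is not known to exist. Likewise, the final assertion that $\ell\setminus\{P,(\z,\t)\}\subset\Int(E)\cup\Int(E^c)$ would contradict $P,(\z,\t)\in\p E$ does not stand on its own. The clean repair (and essentially the Cheeger--Kleiner argument) is to first upgrade $R\in\Int(E)$ to $\ell\subset E$: a ball around $R$ contained in $\Int(E)$ contains two distinct points of $\ell$, and horizontal convexity in the sense of this paper then puts the \emph{whole line} through them inside $E$. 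Now pick any $T\in\ell$ with $P$ strictly between $T$ and $R$; since $T\in E\subset\ol E$, part~\ref{unnis} applied to $T$ and the disc through $R$ gives $\left]T,R\right[\subset\Int(E)$, hence $P\in\Int(E)$, the desired contradiction --- with no case distinction at all. Alternatively, your ``relabel'' remark can be made precise: if both rays of $\ell$ beyond $P$ and beyond $(\z,\t)$ lay in $\p E$, replace the pair $(P,(\z,\t))$ by $((\z,\t),Q'')$ with $Q''$ a boundary point on the far ray, so that $R$ is no longer between the two chosen boundary points and your first case applies; but as written this step is only asserted, not carried out.
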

 \begin{proof}
  The same of \cite{CheegerKleiner10}.
 \end{proof}

 The following lemma should be compared with its Heisenberg version,  Lemma~4.9 in~\cite{CheegerKleiner10}.
  \begin{lemma}\label{unionline} 
If $E\subset\G$ is monotone, then for all $P\in \p E$  and for any 
two-dimensional subspace~$V\subset H_P$, there is a line $\ell$ satisfying $P\in\ell \subset V$ and completely contained in $\p E$.
 \end{lemma}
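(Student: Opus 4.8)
The plan is to work inside the two-plane $V\subset H_P$ through $P$, which we may think of (after a left translation placing $P$ at the origin and a linear change in $Z$) as a copy of the $(u_1,u_2)$-plane, and to produce the desired line by a bisection/compactness argument on directions in $V$. Concretely, for each unit vector $w\in V$ let $\ell_w$ be the horizontal line through $P$ in direction $w$, i.e. $\ell_w=\{P\cdot(sw,0):s\in\R\}$. Since $E$ is monotone and $P\in\p E$, for every $w$ the line $\ell_w$ meets both $\ol E$ and $\ol{E^c}$ (otherwise $P$ would be interior to $E$ or to $E^c$, using Theorem~\ref{conogelato} / Proposition~\ref{trasversale}): indeed if $\ell_w\subset \Int(E)$ near $P$ on one side and $\ell_{-w}\subset\Int(E)$ on the other we would contradict $P\in\p E$, and if one side of $\ell_w$ enters $E^c$ then by Proposition~\ref{trasversale}\ref{unnis} applied in both directions we locate $P$ on the boundary of a segment whose relative interior lies in $\Int(E)$, again unless the whole line $\ell_w$ lies in $\p E$. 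So for each $w$ either $\ell_w\subset\p E$ (and we are done), or $\ell_w$ has points of $\Int(E)$ on exactly one side of $P$ and points of $\Int(E^c)$ on the other side.

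Assuming no $\ell_w$ is entirely in $\p E$, define for each unit $w\in V$ the ``sign'' $\sigma(w)\in\{+,-\}$ recording which side of $P$ along $\ell_w$ contains points of $\Int(E)$; then $\sigma(-w)=-\sigma(w)$. The key point is a continuity/intermediate-value statement: as $w$ rotates continuously in $V$, the side on which $E$ lies cannot jump without passing through a direction $w_0$ for which $\ell_{w_0}\subset\p E$. To make this rigorous I would argue by contradiction: pick $w_0,w_1$ close with $\sigma(w_0)=+$, $\sigma(w_1)=-$; then there is a point $A\in\ell_{w_0}\cap\Int(E)$ on the $+$ side and a point $B\in\ell_{w_1}\cap\Int(E)$ on the $-$ side, so (choosing $A,B$ on the correct sides) $A$ and $B$ together with $P$ span a configuration in which a surface transversal to a line gives, via Proposition~\ref{trasversale}, a full two-dimensional neighborhood of $P$ inside $\Int(E)$ — contradicting $P\in\p E$. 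The same argument run with $E^c$ in place of $E$ rules out the opposite jump. Hence the function $w\mapsto\sigma(w)$ is locally constant on the unit circle of $V$ minus the (closed) set of $w$ with $\ell_w\subset\p E$; since $\sigma(-w)=-\sigma(w)$, this forces that bad set to be nonempty, i.e. there exists $w_0$ with $\ell_{w_0}\subset\p E$, which is the line $\ell$ we seek.

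The main obstacle I expect is the transversality bookkeeping in the contradiction step: I need to arrange that the two points $A\in\Int(E)$, $B\in\Int(E)$ obtained from nearby directions actually produce a surface transversal to a horizontal line through $P$, so that Proposition~\ref{trasversale}\ref{unnis} (applied once on each side of that transversal line) yields an open set of $\Int(E)$ surrounding $P$ rather than just a segment. This is where convexity of both $E$ and $E^c$ is used essentially: from $A,B\in\Int(E)$ and the fact that $\Int(E)$ is convex (Lemma~\ref{linea}-type reasoning, or the cone property), one gets a two-dimensional piece of $\Int(E)$ accumulating at $P$, and then $P\notin\p E$. A secondary point to handle carefully is that $V\subset H_P$ guarantees that all the $\ell_w$ are genuine horizontal lines through $P$ (this is exactly why the hypothesis $V\subset H_P$ is imposed), so the whole discussion stays within the class of sets to which convexity and Theorem~\ref{conogelato} apply; the degenerate directions where the line is short or where $E\cap\ell_w$ is a single point are absorbed into the ``bad set'' and only help us.
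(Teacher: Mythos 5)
Your overall architecture --- parametrize the horizontal directions in $V$ by the unit circle, observe that the ``side on which $\Int(E)$ lies'' is an odd, locally constant function of the direction wherever it is defined, and conclude by connectedness that some direction must be bad, i.e.\ must give a line inside $\p E$ --- is essentially the paper's argument (the paper phrases it as: the unit circle $S=\{(u\cos\theta+v\sin\theta,0)\}$ in $V$ is connected, so if it missed $\p E$ it would lie entirely in $\Int(E)$ or in $\Int(E^c)$, and then the two \emph{antipodal} points $(\pm z,0)$, which are aligned with midpoint $P$, would force $P$ to be interior). However, the step where you establish local constancy has a genuine gap. You take $A\in\ell_{w_0}\cap\Int(E)$ on the $+$ side and $B\in\ell_{w_1}\cap\Int(E)$ on the $-$ side and claim that Proposition~\ref{trasversale} then produces ``a full two-dimensional neighborhood of $P$ inside $\Int(E)$''. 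Proposition~\ref{trasversale}\ref{unnis} never produces a neighborhood: it only places the \emph{open segment between two aligned points} in $\Int(E)$. Moreover $A=P\cdot(s'w_0,0)$ and $B=P\cdot(-sw_1,0)$ are in general \emph{not} aligned: one computes $A^{-1}\cdot B=\bigl(-s'w_0-sw_1,\, s's\,Q(w_0,w_1)\bigr)$, whose $T$-component vanishes only when $Q(w_0,w_1)=0$. So there is no horizontal segment joining $A$ to $B$ through $P$, and no single line through $P$ to which Proposition~\ref{trasversale} could be applied ``on both sides''. As written, the contradiction step would fail precisely in the transversality bookkeeping you yourself flag as the main obstacle.

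The repair is easy and is exactly what makes the paper's proof three lines long: test membership at the single, continuously varying point $A_w:=P\cdot(w,0)$ rather than at arbitrary points of the two rays. If $\sigma(w_0)=+$ then the whole open positive ray of $\ell_{w_0}$ lies in $\Int(E)$ (here you do need Lemma~\ref{linea} to know that $\ell_w\cap\p E=\{P\}$ whenever $\ell_w\not\subset\p E$, so that each open ray is a connected subset of $\Int(E)\cup\Int(E^c)$ and hence lies in one of the two); in particular $A_{w_0}\in\Int(E)$, hence $A_w\in\Int(E)$ for all nearby $w$ by openness and continuity of $w\mapsto A_w$, hence $\sigma(w)=+$ for all nearby $w$ outside the bad set. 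This gives local constancy with no transversality argument at all. Equivalently, drop $\sigma$ altogether: if the circle $\{A_w\}$ misses $\p E$ it lies entirely in one of the two open sets by connectedness, and the antipodal points $A_w$ and $A_{-w}$ \emph{are} aligned (since $Q(w,-w)=0$) with $P$ as midpoint, so Proposition~\ref{trasversale}\ref{unnis} applied to that one line contradicts $P\in\p E$; a point $A_{w_0}\in\p E$ then yields the desired line through $P$ and $A_{w_0}$ via Lemma~\ref{linea}.
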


 \begin{proof}
  Assume  without loss of generality that $(0,0)\in\p E$. Let $u,v\in Z$ be a pair of orthonormal vectors. 
  Look at  the circle $S:=\{(u\cos \theta + v\sin\theta, 0):\theta\in\R\}$. We claim that there is a point $
  (z,0)\in S\cap \p E$. Indeed, if this would not be true, then either $S\subset \Int E$ or $S\subset \Int 
  (E^c)$. Assume the former and let $(z,0)$ and $(-z,0)$ be two points of $S$. Since they are aligned, their intermediate point $(0,0)$ would be in $\Int E$, giving a contradiction. 
 \end{proof}

 Next we recall the definition of \emph{intrinsic graph}.
 \begin{definition}[Intrinsic graph]
  Let $\G=Z\times T$ be the two step Carnot group  with law~\eqref{machegruppo}. Let $\xi\in Z$ be a unit 
  vector and let $\xi^\perp\subset Z$ be its orthogonal space. Let $W\subset \xi^\perp\times T$ be an open set in $
  \xi^\perp\times T$ containing the origin. Given a function $\psi:W\to \R$, the   $\xi$-graph  (and the corresponding epigraph and ipograph) associated    
  with $\psi$ are
\begin{equation*}
\begin{aligned}
 &\operatorname{graph}(\psi):= \big\{ (\eta, \t)\cdot (\psi(\eta,\tau)\xi, 0): (\eta,\t)\in W
 \big\}.
\\
& \operatorname{epi}(\psi):= \big\{ (\eta, \t)\cdot (s \xi, 0): (\eta,\t)\in W
\quad\text{and $ \psi (\eta, \t)<s<\infty $} \big\}\text{, and }
 \\&
 \operatorname{ipo}(\psi):= \big\{ (\eta, \t)\cdot (s \xi, 0): (\eta,\t)\in W 
\quad\text{and $-\infty <s< \psi (\eta, \t) $
   } \big\}.
\end{aligned}
\end{equation*}
 \end{definition}
Intrinsic graphs appear in Geometric Measure Theory in Carnot groups. See \cite{FSSC01,AmbrosioSerraCassanoVittone06,FranchiSerapioni16} and the references therein.

Here we show the following theorem which relates monotonicity and intrinsic graphs.  
 \begin{theorem}\label{grafici} 
  Let $E\subset \G$ be a monotone set in a Carnot group of step two. Assume that $(0,0)\in\p E$. Then, either the whole horizontal plane 
  $ H_{(0,0)} $ is contained in $\p E$, or there is  a vector $\xi  \in Z$, an open neighborhood~$W$ of the origin in  $ \xi^{\perp}\times T$ and a function $
  \psi:W\to  \R$ such that 
   \begin{equation}
\operatorname{epi}(\psi)\subset \operatorname{int} (E)  ,
\qquad \operatorname{ipo}(\psi)\subset \operatorname{int} (E^c)\quad\text{ and }
\operatorname{graph}(\psi)\subset  \p E .
   \end{equation}
 Furthermore, the function $\psi$ is continuous. 
 \end{theorem}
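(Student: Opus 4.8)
The plan is to analyze the local structure of $\partial E$ near the origin using the cone property (Theorem~\ref{conogelato}) together with Lemma~\ref{unionline}. First I would observe that by Lemma~\ref{unionline}, for every two-dimensional subspace $V\subset H_{(0,0)}$ there is a horizontal line through the origin contained in $V\cap\partial E$. The dichotomy in the statement then amounts to deciding whether \emph{all} horizontal directions at the origin give rise to lines in $\partial E$ (the first alternative, $H_{(0,0)}\subset\partial E$), or whether some direction fails. So suppose $H_{(0,0)}\not\subset\partial E$: there is a horizontal line $\ell_0=\{(s\xi,0):s\in\R\}$ through the origin which is \emph{not} entirely in $\partial E$, hence meets $\operatorname{int}(E)$ or $\operatorname{int}(E^c)$; WLOG (replacing $E$ by $E^c$) there is $s_0$ with $(s_0\xi,0)\in\operatorname{int}(E)$. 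The claim will be that $\xi$ (suitably normalized and oriented) is the graph direction, and that $\psi$ is defined by the first-hitting value
\[
\psi(\eta,\tau):=\inf\{s\in\R:(\eta,\tau)\cdot(s\xi,0)\in E\},
\]
for $(\eta,\tau)$ in a small neighborhood $W$ of the origin in $\xi^\perp\times T$.

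The key steps, in order. (1) \emph{Well-posedness and finiteness of $\psi$.} Using Theorem~\ref{conogelato} at the point $P=(0,0)\in\partial E$ with the interior point $Q=(s_0\xi,0)$, one gets $\varepsilon>0$ so that for all $(z',t')\in B((0,0),\varepsilon)\cap\overline E$ the truncated cone $\bigcup_{0<s\le1}B((z',t')\cdot(s s_0\xi,0),\varepsilon s)\subset C=E$; in particular the whole horizontal segment from $(z',t')$ in the $\xi$-direction enters $\operatorname{int}(E)$. This shows that for $(\eta,\tau)\in W$ the fiber $\{(\eta,\tau)\cdot(s\xi,0):s\in\R\}\cap E$ is nonempty and bounded below, so $\psi$ is finite; by convexity of $E$ applied along this horizontal line, the fiber $\cap E$ is a half-line or the whole line, and by convexity of $E^c$ it cannot be the whole line (the origin itself is not interior to $E$), so the fiber $\cap E$ is exactly $\{s\ge\psi(\eta,\tau)\}$ possibly without the endpoint. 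This gives $\operatorname{epi}(\psi)\subset\operatorname{int}(E)$: indeed for $s>\psi(\eta,\tau)$ the point $(\eta,\tau)\cdot(s\xi,0)$ lies in the open cone over an interior point, hence in $\operatorname{int}(E)$ (this is precisely the ``even $P\in\partial C\setminus C$'' remark after Theorem~\ref{conogelato}, applied at the boundary base point of the fiber). Symmetrically, $\operatorname{ipo}(\psi)\subset\operatorname{int}(E^c)$ using Proposition~\ref{trasversale}(ii) or the analogous cone for $E^c$, and $\operatorname{graph}(\psi)\subset\partial E$ follows since $\operatorname{graph}(\psi)$ separates the two open sets. (2) \emph{Continuity of $\psi$.} This is where the cone property does the real work: upper and lower semicontinuity of a first-hitting function are each equivalent to an open-cone statement. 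If $(\eta_n,\tau_n)\to(\eta,\tau)$ in $W$, pick $s>\psi(\eta,\tau)$; then $(\eta,\tau)\cdot(s\xi,0)\in\operatorname{int}(E)$, and since $\operatorname{int}(E)$ is open, $(\eta_n,\tau_n)\cdot(s\xi,0)\in E$ for large $n$, giving $\limsup\psi(\eta_n,\tau_n)\le s$; letting $s\downarrow\psi(\eta,\tau)$ gives upper semicontinuity. For lower semicontinuity one runs the same argument on $E^c$ using that $(\eta,\tau)\cdot(s\xi,0)\in\operatorname{int}(E^c)$ for $s<\psi(\eta,\tau)$. (3) \emph{Consistency of the direction.} One must check that the choice of $\xi$ is ``global enough'', i.e. that the same $\xi$ works on a full neighborhood $W$ and that $\psi$ is single-valued; this is exactly the statement that for $(\eta,\tau)$ near $0$ the fiber $\cap E$ is a genuine half-line, which was already obtained in step (1) from the cone at the nearby boundary base point (every point of $\partial E$ near the origin inherits, via Theorem~\ref{conogelato} and Lemma~\ref{unionline}, a $\xi$-cone).

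I expect the main obstacle to be step (1)--(3) \emph{interlocked}: proving that a \emph{single} unit vector $\xi$ and a \emph{single} neighborhood $W$ serve for all base points simultaneously, and that the epigraph/ipograph inclusions are strict (open sets land in the interiors). The delicate point is handling base points $(\eta,\tau)\cdot(\psi(\eta,\tau)\xi,0)\in\partial E$ that may lie in $\partial E\setminus E$ or $\partial E\setminus E^c$: there the bare definition of convexity does not put the open horizontal segment into $\operatorname{int}(E)$, and one genuinely needs the sharper conclusion of Theorem~\ref{conogelato} (and its transversal-surface variant, Proposition~\ref{trasversale}) that the open segment lies in $\operatorname{int}(E)$. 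A secondary subtlety is ruling out the degenerate scenario where $\psi$ would have to jump — this is excluded once continuity is in hand, but continuity itself relies on the cone being \emph{open}, so the logical order matters: first finiteness and the epi/ipo inclusions via the cone, then continuity as a soft consequence of openness of $\operatorname{int}(E)$ and $\operatorname{int}(E^c)$. Apart from that, the argument is a fairly direct packaging of Theorem~\ref{conogelato}, Proposition~\ref{trasversale}, and Lemma~\ref{unionline}; the computations are routine.
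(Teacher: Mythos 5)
Your proposal follows essentially the same route as the paper: use the cone property of Theorem~\ref{conogelato} at the origin in the $\pm\xi$ directions to show that every fiber $\{(\eta,\tau)\cdot(s\xi,0):s\in\R\}$ over a small neighborhood $W\subset\xi^{\perp}\times T$ meets both $\Int(E)$ and $\Int(E^c)$, define $\psi$ as the splitting value of each fiber (which exists and separates the two interiors by convexity of $E$ and of $E^c$ along the fiber, upgraded to openness via Proposition~\ref{trasversale}), and deduce continuity softly from the openness of $\Int(E)$ and $\Int(E^c)$. The one point to tighten is that the cone statement you quote only covers base points $(z',t')\in\overline E$, whereas base points $(\eta,\tau)\in W$ may lie in $\Int(E^c)$; the paper resolves this by a direct metric estimate showing that $(\eta,\tau)\cdot(\pm s_0\xi,0)$ lands inside the two fixed cones with apex at the origin, which is exactly the continuity-of-translation-plus-openness mechanism you already invoke in your semicontinuity step.
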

 
 \begin{remark} Observe the following facts.
 
 \begin{enumerate}[nosep,label=(\alph*)]
  \item 
The inner cone  property is not global. For example let us consider 
the standard Heisenberg group   $\H=\{(x,y,t
 )\in\R^3\}$ with group law $
                             (x,y,t)\cdot(\xi, \eta,\t):=(x+\xi, y+\eta, t+\t+2(y\xi-x \eta))
                            $.
Look at the monotone set $E=\{x>t\}\subset\H$, where $\H$ denotes the standard Heisenberg group. There is no $\lambda>0$ such that the cone $\cup_{0<s<\infty }B((s,0,0), \lambda s)$ is contained in $E$. However, the truncated cone of the form 
 $\cup_{0<s\leq s_0 }B((s,0,0), \lambda s)$ is contained in $E$ for suitable $s_0$.
 
\item  The same example tells that the set $W$ appearing in the statement of Theorem \ref{grafici} can be a strict subset of the whole $\xi^\perp\times T$.  Indeed,  
 in the example above, $(0, 1/2, 0)\in\partial E$ becomes a characteristic point, but intrinsic graphs inherently  can not have characteristic points.

 \item The continuity of $\psi$ can be strengthened by saying that the function $\psi $ is \emph{intrinsic Lipschitz-continuous} (see~\cite{ArenaSerapioni09}).   
\item A version of Theorem~\ref{grafici} holds assuming that $E$ is  convex, but not necessarily monotone.

 \end{enumerate}
 
 \end{remark}

 \begin{proof}[Proof of Theorem \ref{grafici}]
Assume that   $(0,0)\in\p E$ and $(s_0e_1, 0)\in \Int(E)$ for some $s_0>0$.     
Split $(x,t)=(x_1,\wh x_1, t)$, where $\wh x_1= (x_2, x_3, \dots, x_m)\in\R^{m-1}$.
We   know 
by Theorem~\ref{conogelato}  that  there   is a positive number  $\lambda $  such that  
  \begin{equation*}
   \bigcup _{0<s\leq s_0}B((s, 0, 0), \lambda s) \subset \Int E, \quad \text{ and} \quad  
      \bigcup _{0<s\leq s_0}B((-s, 0, 0), \lambda s) \subset \Int (E^c).
  \end{equation*}

\step{Step 1.} To show the     existence of the function $\psi$,  
we prove that there is a neighborhood $W\subset \R^{m-1}\times \R^k$ such that for all $(\wh x_1, t)\in W$, we have   
  \begin{equation}\label{conetto} 
   (0, \wh x_1, t)\cdot (-s_0,0,0)\in \Int (E^c) \qquad\text{and}\qquad 
        (0, \wh x_1, t)\cdot (s_0,0,0)\in \Int( E).
    \end{equation}   To prove the second inclusion, since 
    $(0, \wh x_1, t)\cdot (s_0,0,0)= \Big(s_0, \wh x_1 ,t+Q\big((0, \wh x_1), (s_0, 0)\big) \Big)$
    it suffices to see that there is $W$ such that for all $
    (\wh x_1, t)\in W$, we have
\begin{equation*}
 \Big\|(-s_0, 0, 0)\cdot \Big(  s_0,  \wh x_1, t+ Q\big( (0, \wh x_1),   (s_0, 0)\big)
  \Big)  \Big\|< \lambda s_0  ,
\end{equation*}
which, using the group law and  the skew-symmetry of $Q$, becomes
\begin{equation*}
 \Big\| 
 \Big(0, \wh x_1 ,  t+ 2Q\Big((0, \wh x_1),  (s_0, 0) 
\Big)\Big\|< \lambda s_0  ,
\end{equation*}
which clearly holds true provided that $|\wh x_1|$ and $|t|$ are small.  We use the estimate $C|\wh x_1|
+C|t|^{1/2}+C|\wh x_1|^{1/2} s_0^{1/2} < \la s_0$ if $\wh x_1$ and $|t| $ are small enough. 

The proof of the first inclusion in \eqref{conetto} is analogous.

\step{Step 2.} By the properties of monotone sets, we can conclude that  for all $(  \wh x_1, t)\in W $, there is a number   $ \psi(\wh x_1, t)$ such that the point $(0, \wh x_1, t)\cdot (s,0,0)$ belongs to $\Int (E)$, if $s\in\left]\psi(\wh x_1, t), +\infty\right[$, to $\Int( E^c)$,  if $s\in\left]-\infty,\psi(\wh x_1, t)\right[$ and to $\p E$ if $s=\psi(\wh x_1, t)$. The function $\psi$ is  also 
bounded by $s_0$ on $W$.

\step{Step 3.} 
We show the continuity of $\psi$  at the origin  (at other points the same argument works). Namely we prove that
\begin{equation*}
\limsup_{(\wh x_1, t)\to (0,0)\in\R^{m-1}\times \R^k}\psi(\wh x_1, t)\leq 0.                                                                                                                                          \end{equation*}
The proof of $\liminf \geq 0$ is analogous.
Assume that  $\lim_{n\to \infty} \psi(\wh x_1^n, t^n)=L>0$, for some sequence $(\wh x_1^n, t^n)\to (0,0)$. Then  $(0,\wh x_1^n, t^n)\cdot (\psi (\wh x_1^n, t^n),0,0)\in \partial E$ for all $n\in\N$.  The sequence converges to $(0,0,L)$ and  this contradicts the fact that the half line $\{(s,0,0): 0<s<\infty \} $ is contained in $\Int (E) $.
\end{proof}

 \section{Monotone sets in \texorpdfstring{$\mathbb{H}\times \R$}{HxR}}
\label{4} 
 
 Consider  the direct product $ \H_{(x,y,t)} \times \R_u=\R^4$
with law
\begin{equation*}
 (x,y,u,t)\cdot (x' , y', u', t')  =   (x+x' ,  y+y',u+u', t+ t' +2(y x'-y'x )
\end{equation*}
and with the subRiemannian distance defined by the vector fields $X=\p_x+2y\p_t$, $Y=\p_y-2x\p_t$ and $U=\p_u$.

In this section we show the following result, which gives immediately the proof of Theorem~\ref{massimino}: 
\begin{theorem}\label{ungrafico} 
Let $E\subset \H\times\R$ be a monotone set, with $\varnothing\neq E\neq \R^4$. Then $\partial E$  is a hyperplane. 
\end{theorem}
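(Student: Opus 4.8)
\textbf{Proof plan for Theorem~\ref{ungrafico}.}
The strategy is to combine the local structure established in Theorem~\ref{grafici} with the simple geometry of~$\H\times\R$. First I would reduce to showing that $\partial E$ is locally a portion of a hyperplane near every boundary point, since a connected topological hypersurface which is locally flat must be globally an affine hyperplane, and $\partial E$ is connected because $E$ and $E^c$ are both convex (hence connected) with nonempty interiors (by Proposition~\ref{vameglio}'s analogue, i.e.\ the argument giving $\Int(E)\cup\Int(E^c)\neq\varnothing$, applied together with the fact that $\partial E$ separates $\R^4$). So fix $P\in\partial E$; after a left translation assume $P=(0,0)$. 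Theorem~\ref{grafici} gives two cases: either the whole horizontal plane $H_{(0,0)}=\Span\{X(0),Y(0)\}$ lies in $\partial E$, or there is a unit vector $\xi\in Z=\R^3$ (here $Z$ is the $(x,y,u)$-space) such that $\partial E$ agrees near $P$ with the intrinsic $\xi$-graph of a continuous function $\psi:W\to\R$, with $\operatorname{epi}(\psi)\subset\Int(E)$ and $\operatorname{ipo}(\psi)\subset\Int(E^c)$.

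In the first case I would argue that $H_{(0,0)}$ being a two-dimensional plane inside the three-dimensional hypersurface $\partial E$ forces, via Lemma~\ref{linea} and Lemma~\ref{unionline}, enough rigidity to conclude $\partial E$ is a hyperplane: every horizontal line through a nearby boundary point lies in $\partial E$ (Lemma~\ref{linea}), and Lemma~\ref{unionline} produces at each boundary point a horizontal line inside any prescribed $2$-plane of the horizontal distribution contained in $\partial E$; one then shows the union of these lines sweeps out a flat piece. The second, generic case is the main one: here I would use that the epigraph of $\psi$ is monotone (it coincides with $E$ near $P$ on one side) to deduce a strong constraint on $\psi$. The key point is that in $\H\times\R$ an intrinsic $\xi$-graph, for a generic direction $\xi$, is foliated by \emph{Euclidean} lines (the integral curves of the vector field $\xi\cdot X$ together with the translations along $\xi^\perp$ are all Euclidean lines because $Q$ is bilinear), and convexity of both $\operatorname{epi}(\psi)$ and $\operatorname{ipo}(\psi)$ in the horizontal sense, combined with these explicit line families, forces $\psi$ to be affine. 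Concretely, I would write $\psi=\psi(\eta,\tau)$ with $\eta\in\xi^\perp$, $\tau\in T=\R$, and show: (i) along $\tau$-lines $\psi$ is affine, using that the curves $s\mapsto(\eta,\tau)\cdot(s v,0)$ for appropriate $v$ realize alignment; (ii) along $\eta$-directions $\psi$ is affine; (iii) mixed second differences vanish. Each of these comes from picking two horizontally-aligned points on $\operatorname{graph}(\psi)$ and using that the connecting horizontal line must also lie in $\partial E$ (Lemma~\ref{linea}), which pins down the behaviour of $\psi$.

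The main obstacle I anticipate is step (i)--(iii) above, i.e.\ upgrading ``continuous and sandwiched between a convex set and a convex set'' to ``affine'': one must rule out the possibility that $\psi$ is, say, convex-but-not-affine in some direction, which a priori is compatible with $\operatorname{epi}(\psi)$ being convex but would violate convexity of $\operatorname{ipo}(\psi)$ --- so the argument must genuinely use \emph{both} convexity hypotheses simultaneously, and it must track how the non-commutativity of the group (the $Q$-term in the group law) distorts which Euclidean lines are horizontal lines through a given graph point. A delicate sub-issue is the dependence of the admissible graph direction $\xi$ on the base point: Theorem~\ref{grafici} produces $\xi$ pointwise, and to glue local flat pieces into a global hyperplane one must check the graph directions are consistent (or handle characteristic points, as the remark after Theorem~\ref{grafici} warns, where $W$ shrinks). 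I would handle this by first proving $\psi$ affine on its domain $W$, concluding $\partial E$ contains a piece of an affine hyperplane $\Sigma$ through $P$; then, since any other boundary point $P'$ near $P$ has its own flat piece which must overlap and hence coincide with $\Sigma$, a connectedness/continuation argument along $\partial E$ extends $\Sigma$ globally, and finally $\Int(E)$ and $\Int(E^c)$, being the two convex open pieces of $\R^4\setminus\Sigma$, are exactly the half-spaces, giving Theorem~\ref{massimino}.
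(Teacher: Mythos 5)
Your skeleton (local graph structure via Theorem~\ref{grafici}, affineness of the graph function via alignment and Lemma~\ref{linea}, then globalization) matches the paper's at a high level, but as written the proposal has genuine gaps at both ends. On the global side, your reduction rests on $\partial E$ being a connected, locally flat topological hypersurface, and neither property is free. Horizontal convexity does not imply Euclidean connectedness (two points not lying on a common horizontal line form a horizontally convex set), so ``$E$ and $E^c$ convex hence connected'' does not justify connectedness of $\partial E$; and you must separately rule out $\Int(\partial E)\neq\varnothing$, which is the paper's Lemma~\ref{frontier}, proved by slicing with $\{u=\text{const}\}$, invoking the Cheeger--Kleiner classification on the Heisenberg slice, and applying Proposition~\ref{trasversale}. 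The paper in fact avoids your continuation argument entirely: it globalizes each local flat piece by explicit line constructions (Propositions~\ref{pappardella} and~\ref{ugra_global}, using skew or parallel-with-distinct-projection line pairs in $\H$), and it disposes of the case $H_P\subset\partial E$ not by a rigidity sweep but by showing directly that it forces $\partial E=H_P$ (Lemma~\ref{giroconto}, which again needs Lemma~\ref{frontier}).

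On the local side, the step you yourself flag as the main obstacle is where the real work lies, and your scheme (i)--(iii) does not close as stated. Affineness of $\psi$ in each variable separately is strictly weaker than affineness: a function of the form $\psi(z,t)=m(z)t+q(z)$ with $m$ homogeneous but nonconstant passes your tests (i) and (ii), and eliminating it requires the second-order computation in~\eqref{joaoss} (differentiating twice in $\la$), not merely the slogan that mixed second differences vanish. Moreover, the paper first reduces, via Lemma~\ref{giocattolino}, to exactly two admissible graph directions --- a $(pX+qY)$-graph or a $U$-graph --- and these require different arguments: the $(pX+qY)$ case uses as an essential external input the classification of monotone sets on each slice $\{u=\bar u\}\cong\H$ to obtain the form~\eqref{gollone}, and then a specific midpoint computation (Step~5 of Proposition~\ref{parte_facile}) to kill the cross term $a_1uy$. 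Your proposal never invokes the slice classification, and without it the convexity of $\operatorname{epi}(\psi)$ and $\operatorname{ipo}(\psi)$ alone does not obviously pin $\psi$ down. So while the plan points in the right direction, the two load-bearing steps (connected local-to-global passage, and upgrading sandwiched-between-convex-sets to affine) are exactly the ones left unproved.
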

  It is reasonable to think that a similar statement can be proved in the setting $\H\times\R^k$, with $k>1$. However, this would require some generalizations of the proof which are not completely trivial.

\subsection{Preparatory lemmas on monotone sets in \texorpdfstring{$\mathbb{H}\times \R$}{HxR}} 
\label{4.1} 
\begin{lemma}\label{frontier} 
 $\Int(E)\cup\Int (E^c) $ is a nonempty (open)  dense set, or equivalently,  $ \Int( \p E) = \varnothing $. 
\end{lemma}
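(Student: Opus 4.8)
The statement is equivalent to showing that $\partial E$ has empty interior. Suppose for contradiction that $\Int(\partial E)\neq\varnothing$; pick a point $P\in\Int(\partial E)$ and, after a left translation, assume $P=(0,0)$. The first step is to apply Theorem~\ref{grafici} at $P$: either the whole horizontal plane $H_{(0,0)}$ is contained in $\partial E$, or there is a neighborhood $W$, a unit vector $\xi\in Z$ and a continuous function $\psi:W\to\R$ whose intrinsic graph lies in $\partial E$, with epigraph in $\Int(E)$ and ipograph in $\Int(E^c)$. In the second case the graph is a genuine hypersurface with $\Int(E)$ and $\Int(E^c)$ on its two sides, so near $P$ the set $\partial E$ is locally the graph and cannot have interior points: this case is excluded immediately. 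Hence we are reduced to the situation where $H_{(0,0)}\subset\partial E$.

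Now I would exploit Lemma~\ref{unionline}: since $(0,0)\in\partial E$, through every two-dimensional subspace $V\subset H_{(0,0)}$ there passes a line $\ell$ with $(0,0)\in\ell\subset V$ and $\ell\subset\partial E$. But in $\H\times\R$ the horizontal plane $H_{(0,0)}=\Span\{X,Y,U\}$ is already three-dimensional, and I want to upgrade ``$H_{(0,0)}\subset\partial E$'' to an open neighborhood of $(0,0)$ being in $\partial E$ — contradicting the fact that $\partial E$ should have empty interior for a ``genuine'' boundary, or more precisely chaining this to a contradiction with $E\neq\varnothing$, $E\neq\R^4$. The mechanism: if $H_Q\subset\partial E$ for $Q$ in a neighborhood of $(0,0)$, then by transversality (moving along $U$, which is not tangent to the copies of $\H$) one sweeps out a $4$-dimensional open set contained in $\partial E$; then $\Int(E)=\Int(E^c)=\varnothing$ on that set, and propagating via Lemma~\ref{linea} (aligned boundary points force the whole line into $\partial E$) and the convexity of $E$ and $E^c$ one shows $\partial E$ is everything, forcing $E$ or $E^c$ empty.

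The crux is therefore the propagation step: from $H_{(0,0)}\subset\partial E$ conclude that $H_Q\subset\partial E$ for all $Q$ in a neighborhood of the origin, equivalently that $\partial E$ is locally a union of horizontal planes. Here is how I would do it. Take any $Q$ close to $(0,0)$. I want to connect $Q$ to a point of $H_{(0,0)}\subset\partial E$ by a horizontal line staying in $\partial E$. If $Q\in\Int(E)$ (say), then $Q$ is aligned with points of $H_{(0,0)}$; the open horizontal segment from such a boundary point toward $Q$ would, by Proposition~\ref{trasversale}\ref{unnis} or by the cone property of Theorem~\ref{conogelato}, land in $\Int(E)$ — but then pushing the vertex slightly, a whole cone around that segment sits in $E$, which is fine; the tension arises because simultaneously $H_{(0,0)}\subset\partial E$ means a $3$-dimensional set of boundary points is ``surrounded'' in a way incompatible with the cone opening to only one side. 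Making this tension precise — that a $3$-dimensional plane of boundary points through the origin, in the $4$-dimensional group $\H\times\R$, cannot coexist with $\Int(E)$ having points arbitrarily close to the origin unless $\Int(E)$ or $\Int(E^c)$ is locally empty — is the real work and, I expect, the main obstacle. Once it is established that $\partial E$ locally equals a union of horizontal $3$-planes, a short global argument (two such planes through nearby points are either equal or their union disconnects $\R^4$ inconsistently with convexity of both $E$ and $E^c$) yields $\Int(E)\cup\Int(E^c)=\varnothing$, whence by Theorem~\ref{grafici} applied anew, or directly, $E$ or $E^c$ is empty, a contradiction. Therefore $\Int(\partial E)=\varnothing$, i.e. $\Int(E)\cup\Int(E^c)$ is open and dense.
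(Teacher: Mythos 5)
Your proposal is a plan rather than a proof, and the two steps you yourself flag as ``the real work'' are precisely where it breaks down. First, the propagation step is actually easier than you fear, but you never close it: if a whole open box $\Omega$ lies in $\p E$, then \emph{every} $Q\in\Omega$ is an interior point of $\p E$, so Theorem~\ref{grafici} applied at $Q$ (the graph alternative being excluded exactly as you excluded it at the origin) forces $H_Q\subset\p E$ for all $Q\in\Omega$ at once; no transversality ``tension'' argument is needed to get this. Second, and more seriously, your endgame is a non sequitur. Even granting that the union of these horizontal planes (propagated via Lemma~\ref{linea}) fills out $\p E=\R^4$, the conclusion ``whence $E$ or $E^c$ is empty'' does not follow from anything you have established: a priori $E$ and $E^c$ could both be dense convex sets with empty interior, which is exactly the configuration $\p E=\R^4$ describes. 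Ruling this out requires a genuine input from one dimension lower, and your proposal never supplies it. (Note also that you cannot appeal to Lemma~\ref{giroconto} here, since its proof in the paper uses Lemma~\ref{frontier}.)

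The paper's proof is much shorter and hinges on exactly the ingredient you are missing. Assuming $\p E\supset\Omega=\{|x|,|y|,|u|,|t|<\e\}$, intersect with the slice $\Sigma=\{u=0\}\simeq\H$: the trace $E\cap\Sigma$ is monotone in $\H$, so the Cheeger--Kleiner classification guarantees a point $(z,0,t)\in\Omega$ and $\d>0$ with $B((z,0,t),\d)\cap\Sigma$ contained in $E\cap\Sigma$ (or in $E^c\cap\Sigma$). This relatively open piece of $\Sigma$ is a $3$-dimensional surface inside $E$, met transversally by the $U$-line through $(z,0,t)$; since $(z,\e/2,t)\in\Omega\subset\p E\subset\ol E$, Proposition~\ref{trasversale}\ref{unnis} places an open subsegment of that $U$-line in $\Int(E)$, contradicting that the segment lies in $\Omega\subset\p E$. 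So the decisive tools are the one-dimensional-reduction to the Heisenberg slice plus the transversality proposition --- neither of which appears in your argument --- and without them the case $\p E=\R^4$ cannot be excluded.
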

\begin{proof}
Assume that $\p E\supset \Omega:=\{(x,y,u,t)\in\R^4: |x|,|y|, |u|, |t|<\e\}$, for some $\e>0$. We look at the set $\Sigma:= \{(x,y,0,t): (x,y,t)\in\R^3\simeq\H \}$. Since $E\cap\Sigma$ is monotone in $\H$, 
by~\cite{CheegerKleiner10},   there is $(z,0,t)\in \Omega$ and there is $\d>0$ such that $B((z,0,t),\d)\cap \Sigma   \subset   E\cap \Sigma$, or  $B((z,0,t),\d)\cap 
\Sigma \subset  E^c\cap \Sigma$. Assume the former and look at the point $(z,\frac{\e}{2}, t)\in \p E$ by assumption. Then,  by Proposition~\ref{trasversale} we see that $(z,u,t)\in \Int(E)$ for all $u\in\left]-\infty,\frac{\e}{2}\right]$. In particular,  $(z,0,t)\in \Int E$. We have a contradiction and the lemma is proved. 
\end{proof}

 \begin{lemma}\label{giroconto} 
  Let  $P\in \H\times \R$  be such that the horizontal plane $H_P$ is contained in $\p E$, then $\p E= H_P$.
 \end{lemma}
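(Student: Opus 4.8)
The plan is to show that the set $F:=\{P' : H_{P'}\subset\p E\}$ is simultaneously open and closed in $\H\times\R$, hence all of $\R^4$, and then derive a contradiction with $\varnothing\neq E\neq\R^4$ unless a suitable rigidity forces $\p E$ to be exactly the single horizontal plane $H_P$. More precisely, I would first normalise by a left translation so that $P=(0,0,0,0)$ and $H_P=\{(x,y,0,0):x,y\in\R\}$. The key geometric input is Lemma~\ref{unionline}: through every boundary point and inside every two-dimensional subspace of the horizontal plane at that point there is a full horizontal line lying in $\p E$. Combined with Lemma~\ref{linea} (if two aligned points lie in $\p E$, the whole line through them does), this lets me propagate the condition ``$H_{P'}\subset\p E$'' from one point to nearby points.

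The main step is the \emph{openness} of $F$ (relative to $\p E$, which by Lemma~\ref{frontier} has empty interior, so care is needed). Suppose $H_P\subset\p E$ with $P$ the origin. I would argue that no point of the form $Q=(q_1,q_2,u_0,t_0)$ with $(u_0,t_0)\neq(0,0)$ can lie in $\p E$: indeed such a $Q$ is horizontally aligned with some point of $H_P$ (one can reach any $(u,t)$-translate along a horizontal line starting from a point of $H_P$, using the $U=\partial_u$ direction together with the Heisenberg horizontal directions — this is exactly the kind of reachability encoded in Theorem~\ref{opp} / Lemma~\ref{duetre} applied to this group). If $Q$ were in $\p E$ while the whole of $H_P\subset\p E$, then by Lemma~\ref{linea} the entire horizontal line joining a point of $H_P$ to $Q$ lies in $\p E$; sweeping such lines out as the endpoint in $H_P$ varies produces a piece of $\p E$ with nonempty interior in $\R^4$, contradicting Lemma~\ref{frontier}. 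Hence $\p E\cap(\H\times\R)$ meets the slab $\{(u,t)\neq(0,0)\}$ only in a ``thin'' way, and in fact one concludes $\p E\subseteq\{(u,t)=(0,0)\}=H_P$.

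For the reverse inclusion $H_P\subseteq\p E$ I would simply use the hypothesis directly (it is assumed), so really the content is the inclusion $\p E\subseteq H_P$, and then I must check $H_P$ is the \emph{full} boundary, not a proper subset — but $H_P$ is a closed hyperplane and $\p E$ is closed, connected-separating (it separates $\Int E$ from $\Int E^c$, both nonempty by $\varnothing\neq E\neq\R^4$ together with Lemma~\ref{frontier}); a closed set contained in a hyperplane that separates $\R^4$ must be the whole hyperplane. I expect the delicate point to be the reachability claim — verifying that every point with $(u,t)\neq(0,0)$ is horizontally aligned with a point of $H_P=\{u=0,t=0\}$ — since in $\H\times\R$ a single horizontal segment changes $u$ linearly and $t$ quadratically, so one must either use a broken horizontal path (and then invoke Lemma~\ref{linea} repeatedly along the pieces) or exhibit an explicit line; this is where the ``thin set of exceptional directions'' phenomenon of Section~\ref{ordine} has to be handled honestly rather than glossed over.
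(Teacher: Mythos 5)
There is a genuine gap, and it starts with the setup. In $\H\times\R$ the horizontal distribution has rank $3$ (spanned by $X,Y,U$), so after translating $P$ to the origin the horizontal plane is the three-dimensional hyperplane $H_P=\{(x,y,u,0):(x,y,u)\in\R^3\}$, not the two-plane $\{(x,y,0,0)\}$ you wrote. This is not a harmless typo: your stated goal, ``no point with $(u_0,t_0)\neq(0,0)$ lies in $\p E$, hence $\p E\subseteq\{u=0,t=0\}$,'' directly contradicts the hypothesis (the hypothesis already puts all points $(x,y,u,0)$ with $u\neq 0$ into $\p E$), and a two-plane cannot be the boundary of a set separating $\R^4$. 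The correct claim to prove is that no point with $t_0\neq0$ lies in $\p E$.

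Even after this correction, your sweeping argument does not produce the contradiction you want. Fixing one offending point $Q\in\p E$ with $t_0\neq 0$ and joining it by lines to the points of $H_P$ horizontally aligned with it gives a union of lines through the single point $Q$ over an (at most two-dimensional) family of endpoints: this set is at most three-dimensional and has empty interior, so Lemma~\ref{frontier} is not violated. The paper's proof inserts the step you are missing: using a one-parameter family of horizontal lines through $P_0=(x_0,0,0,1)$, each of which meets $\{t=0\}\subset\p E$ at a nonzero time, it first upgrades the single point $P_0$ to the \emph{entire} horizontal hyperplane $H_{P_0}=\{t=1-2x_0y\}\subset\p E$ via Lemma~\ref{linea}; only then does it sweep the four-parameter family of lines $\gamma_{x,y,u}(\la)=(x+\la,y,u,2\la y)$ joining the two transversal hyperplanes $\{t=0\}$ and $H_{P_0}$, which fills a dense open subset of $\R^4$ and contradicts Lemma~\ref{frontier}. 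Finally, the exceptional case you flag but do not resolve --- a point $Q=(0,0,u_0,t_0)$ whose Heisenberg projection vanishes, so that no horizontal line through $Q$ reaches $\{t=0\}$ --- is handled in the paper's Step~1 by applying Lemma~\ref{unionline} inside the plane $\{(\z,u_0,t_0):\z\in\C\}$ to replace $Q$ by a boundary point with nonzero projection, followed by a rotation. Your proposal identifies the right tools (Lemmas~\ref{linea}, \ref{unionline}, \ref{frontier}) but the dimension count in the key step fails without the intermediate hyperplane.
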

\begin{proof}
 Let $H_0=\{(\xi,\eta,\omega, 0):(\xi,\eta,\omega)\in\R^3\}\subset \p E$ and assume by contradiction that a point $P_0=(x_0,y_0,u_0,t_0)$ with $t\neq 0$ belongs to $ \p E$. Without loss of generality assume $P_0 =(x_0,y_0,0, 1)$. 
 
\step{Step 1. } We claim that we can assume $z_0=(x_0, y_0)\neq (0,0)$. Indeed, if $P_0= (0,0,0,1)\in\p E$, we look at the plane $S=\{ 
 (\z,0,1):\z\in\C\}$. By Lemma~\ref{unionline},  there is a line $\{(sz_0, 0,1):s\in\R\}\subset \p E$.
  Up to  a rotation we may assume $z_0=(x_0,0)$.  

 \step{Step 2.} 
Next we show that  $\p E=\G$, contradicting Lemma~\ref{frontier}.
  To see the claim,
  let $P_0=(x_0,  0,0,1) \in \p E$ be the point found in Step 1. 
  Look at the family of lines
\begin{equation*}
 \gamma_{\xi,\eta,\s}(\la)=(x_0,0,0,1)\cdot (\la\xi,\la\eta,\la\s, 0)=(x_0+\la\xi, \la\eta, \la\sigma,1-2x_0\la\eta),
\end{equation*}
where $(\xi,\eta,\s)\in\R^3$ is a nonzero vector.  It is easy to see that for any $\eta\neq 0$ the line $\gamma_{\xi,\eta,\s}$ touches the plane $t=0$  at a time $\la=\frac{1}{2x_0\eta}\neq 0$. This implies that the whole line is contained in $\p E$, by Lemma~\ref{linea}. Therefore, if we consider the horizontal plane at $P_0$
\begin{equation*}
 H_{P_0}=\Big\{ (x_0+\xi,\eta,\s, 1-2x_0\eta): \xi,\eta,\s\in \R\Big\}, 
\end{equation*}
all its points with $\eta\neq 0$ belong to $\p E$. Since $\p E$ is closed, we conclude that 
$H_{P_0}\subset\p E$. Note that $H_{P_0} $ is the plane of equation $t=1-2x_0 y$.
Now, consider the family of lines
\begin{equation*}
 \gamma_{x,y,u}(\la) = (x,y,u,0)\cdot(\la,0,0,0)= (x+\la, y, u, 2\la y).
\end{equation*}
where $(x,y,u)\in\R^3$. It is easy to see that   if $ \frac{1}{2x_0}\neq y \neq 0$, the line $ \gamma_{x,y,u}$
touches the plane $H_{P_0}$ at the nonzero time $\la=\frac{1-2x_0y}{2y}\neq 0$. By 
Lemma~\ref{linea}, this implies that for all  $(x,y,u)$ with $0\neq y\neq  \frac {1}{2x_0}$, the line $\gamma_{x,y,u}$ is fully contained into $\p E$. Thus
\begin{equation*}
\begin{aligned}
\p E &\supset  \Big\{ (x+\la, y, u, 2\la y): x,u,\la\in\R,\quad \text{and}\quad   0\neq y\neq  \frac{ 1}{2x_0}\Big\}.
\\&=\Big\{(\xi, \eta,\omega,\t): \xi,\omega,\t\in\R \quad\text{ and }  \frac {1}{2x_0} \neq \eta\neq 0
\Big\}.                                                                                        \end{aligned}
\end{equation*}
Taking the closure of the latter set, we conclude that~$\p E=\R^4$. 
This contradicts Lemma~\ref{frontier} and ends the proof.
 \end{proof}

 \begin{lemma}\label{giocattolino} 
Let $E  $ and $E^c$ be both nonempty. Let $0\in\p E$.  Then,  one,  and only one, of the following three  items holds.
\begin{enumerate}[nosep,label=(\roman*)]
 \item \label{giocherello}  $\p E=\{(x,y,u,0): (x,y,u)\in \R^3\} $.    
 
 \item \label{capperi}  The point $(0,0, 1,0)$ belongs to $\Int (E)\cup \Int (E^c)$.
 \item \label{cavoli} There is $(a,b)\neq (0,0) $ such that $(a,b,0,0)\in \Int (E)\cup \Int (E^c)$.

\end{enumerate}
 \end{lemma}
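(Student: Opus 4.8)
The plan is to show that these three items are mutually exclusive and that at least one always holds. Mutual exclusivity is the easy half: items \ref{capperi} and \ref{cavoli} are incompatible because the horizontal plane $H_0=\{(x,y,u,0)\}$ contains both $(0,0,1,0)$ and every $(a,b,0,0)$, and by Lemma~\ref{unionline} applied to suitable two-dimensional subspaces $V\subset H_0$, if one point of $H_0$ lies in $\Int(E)\cup\Int(E^c)$ then $H_0\not\subset\p E$; conversely if $H_0\subset\p E$ then Lemma~\ref{giroconto} forces $\p E=H_0$, which is precisely \ref{giocherello}, and in that case no point of $H_0$ can lie in the open set $\Int(E)\cup\Int(E^c)$. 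So \ref{giocherello} excludes both \ref{capperi} and \ref{cavoli}, and a short argument shows \ref{capperi} and \ref{cavoli} cannot both fail trivially —  rather, I will show directly that \emph{at least one} of the three holds.

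For the existence part, suppose \ref{giocherello} fails; I must produce either \ref{capperi} or \ref{cavoli}. Since $H_0\not\subset\p E$, there is a point $R\in H_0$ with $R\in\Int(E)\cup\Int(E^c)$; write $R=(a,b,c,0)$. If $(a,b)\ne(0,0)$, then I would like to conclude \ref{cavoli} directly, but $R$ has a possibly nonzero $u$-coordinate $c$, so I first note that $H_0$ is a linear subspace and $\Int(E)\cup\Int(E^c)$ is open, hence the whole segment from $R$ to a nearby point of $H_0$ stays in the open set; in particular I can slide within $H_0$. The cleaner route: use Lemma~\ref{unionline} with the two-dimensional subspace $V=\Span\{(a,b,0),(0,0,1)\}\subset H_0$ (the "vertical plane over the direction $(a,b)$"). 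Since $H_0\not\subset\p E$ we have $V\not\subset\p E$ as well once we argue no line through $0$ in $V$ lies in $\p E$ — but actually Lemma~\ref{unionline} gives a line in $V$ contained in $\p E$, which is the wrong direction. Instead I argue contrapositively inside $V$: if \emph{both} $(0,0,1,0)\in\p E$ and $(a,b,0,0)\in\p E$ for all $(a,b)\ne 0$, then all of $H_0\subset\p E$ (closedness), contradicting the failure of \ref{giocherello}. Hence either $(0,0,1,0)\notin\p E$, giving \ref{capperi} after noting $\R^4\setminus\p E=\Int(E)\cup\Int(E^c)$ by Lemma~\ref{frontier}, or some $(a,b,0,0)\notin\p E$ with $(a,b)\ne(0,0)$, giving \ref{cavoli}.

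The step I expect to be the main obstacle is verifying carefully that $\R^4\setminus\p E = \Int(E)\cup\Int(E^c)$ — i.e.\ that $\p E$ really is the topological boundary and $E,E^c$ partition the complement into the two interiors — and pinning down exactly which of the stated inclusions of Lemma~\ref{frontier} and Lemma~\ref{giroconto} are being invoked, since the argument hinges on translating "not on the boundary" into "in one of the interiors." I would open with the mutual-exclusivity bookkeeping (two or three lines, citing Lemmas~\ref{giroconto} and~\ref{unionline}), then carry out the dichotomy above: either $H_0\subset\p E$ (case \ref{giocherello}, via Lemma~\ref{giroconto}), or $H_0\not\subset\p E$, in which case by closedness of $\p E$ either the single point $(0,0,1,0)$ escapes $\p E$ (case \ref{capperi}) or some horizontal direction point $(a,b,0,0)$ escapes (case \ref{cavoli}).
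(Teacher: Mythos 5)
Your overall strategy for the existence half is the same as the paper's: assume \ref{giocherello} fails and derive a contradiction from the simultaneous failure of \ref{capperi} and \ref{cavoli}. But the one step that carries all the weight is wrong as stated. If \ref{capperi} and \ref{cavoli} both fail, you know that $(0,0,1,0)\in\p E$ and that $(a,b,0,0)\in\p E$ for every $(a,b)$; you then claim that ``all of $H_0\subset\p E$ (closedness)''. Closedness gives you nothing here: the set $\{(x,y,0,0):(x,y)\in\R^2\}\cup\{(0,0,1,0)\}$ is already closed and is a tiny proper subset of the hyperplane $\{t=0\}$. The missing ingredient is Lemma~\ref{linea}. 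First, since $(0,0,0,0)$ and $(0,0,1,0)$ are aligned boundary points, the whole $u$-axis $\{(0,0,u,0):u\in\R\}$ lies in $\p E$. Then, for arbitrary $x,y,u$, the points $(0,0,u,0)$ and $(x,y,0,0)$ are aligned via the horizontal line $\gamma(\la)=(0,0,u,0)\cdot(\la x,\la y,-\la u,0)=(\la x,\la y,(1-\la)u,0)$, so this entire line lies in $\p E$; taking $\la=\tfrac12$ and letting $x,y,u$ vary sweeps out all of $\{t=0\}$, and Lemma~\ref{giroconto} then yields \ref{giocherello}, the desired contradiction. This horizontal-interpolation step is exactly the content of the paper's proof, and without it your argument does not close.

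Two smaller points. Your opening claim that \ref{capperi} and \ref{cavoli} are ``incompatible'' is not proved by what follows (you only show that each of them is incompatible with \ref{giocherello}), and in fact it is false: for $E=\{x+u>0\}$, which is monotone with $0\in\p E$, both $(0,0,1,0)$ and $(1,0,0,0)$ lie in $\Int(E)$. The exclusivity that can actually be established — and all that the paper's own proof establishes or later uses — is that \ref{giocherello} excludes the other two. Finally, the identity $\R^4\setminus\p E=\Int(E)\cup\Int(E^c)$ is elementary point-set topology and does not require Lemma~\ref{frontier}, which asserts the different fact $\Int(\p E)=\varnothing$.
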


 \begin{proof}
  Assume that $\{ t=0\}\subset \p E$. Then  by Lemma \ref{giroconto} the boundary $\p E $ is the hyperplane of equation  $t=0$.

  If \ref{giocherello} does not hold, in order to show that at least one among \ref{capperi} and \ref{cavoli} must hold, assume that both such circumstances fail. This means that all points $(x,y,0,0)$
and $(0,0,u,0)$ belong to $\p E$ for all $(x,y,u)\in\R^3$. Therefore 
$
 \gamma(\la)= (\la x,\la y, (1-\la )u,0)\in \p E$ for all $x,y,u,\la \in\R$.
Taking $\la=\frac 12$ and since $x,y$ and $u$ are arbitrary, we discover that \ref{giocherello} 
holds,  getting  a contradiction. 
\end{proof}

\subsection{Classification of monotone \texorpdfstring{$(pX+qY)$-epighraph}{HxR}}
\label{4.2}  
In this section we analyze case \ref{cavoli} of Lemma~\ref{giocattolino}. 
 \begin{proposition}[Monotone $(pX+qY)$-epigraphs (local statement)]\label{parte_facile}
 Assume that  the origin $ (0,0,0,0)  \in \p E$ and $(p,q,  0,0)\in \Int (E)$ for some $(p,q)\neq (0,0)$.   Then there is a neighborhood $\Omega$ of $0$ in $\R^4$ and a plane $\Sigma$ of equation $rx+sy+cu+bt=0$, not 
containing $(p,q,0,0)$ such that $\p E\cap\Omega=\Sigma\cap \Omega$.  
 \end{proposition}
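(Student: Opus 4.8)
The plan is to combine Theorem~\ref{grafici} with the line--closure Lemma~\ref{linea} and the transversal--line Lemma~\ref{unionline}. After a rotation of the $(x,y)$--plane (a group automorphism of $\H\times\R$) and a dilation we may assume that the interior point $(p,q,0,0)$ is $(\rho,0,0,0)$ with $\rho>0$. Since $(\rho,0,0,0)\in\Int(E)$ we cannot be in the alternative ``$H_{(0,0)}\subset\partial E$'' of Theorem~\ref{grafici}; hence there are a neighbourhood $\Omega_{0}$ of $0$ in $\R^{4}$ and a continuous (in fact intrinsic Lipschitz) function $\psi$ with $\partial E\cap\Omega_{0}=\operatorname{graph}(\psi)$, $\operatorname{epi}(\psi)\subset\Int(E)$, $\operatorname{ipo}(\psi)\subset\Int(E^{c})$, the intrinsic graph being taken in the direction $\xi=e_{1}$. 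Using the regularity of $\psi$ I would rewrite this as $\partial E\cap\Omega_{0}=\{x=f(y,u,t)\}$ for a continuous $f$ on a neighbourhood $\Omega'$ of the origin of $\R^{3}$, with $f(0)=0$ and $\{x>f\}\subset\Int(E)$, $\{x<f\}\subset\Int(E^{c})$ inside $\Omega_{0}$. It then remains to prove that $f$ is affine near $0$: this yields $\partial E\cap\Omega'=\{x=\alpha y+\beta u+\gamma t\}$, a plane of the required form $rx+sy+cu+bt=0$ with $r=1$, not containing $(\rho,0,0,0)$ because $\rho\ne0$; undoing the normalising rotation gives the statement.

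Two elementary computations drive the argument. First, $(x_{1},y_{1},u_{1},t_{1})$ and $(x_{2},y_{2},u_{2},t_{2})$ are horizontally aligned exactly when $t_{2}-t_{1}=2(x_{2}y_{1}-x_{1}y_{2})$, and then the horizontal segment joining them is the ordinary Euclidean segment, along which all four coordinates are affine in the parameter. By Lemma~\ref{linea}, whenever two points of $\partial E\cap\Omega_{0}$ are aligned the whole horizontal line through them stays in $\partial E$; intersecting with $\Omega_{0}=\{x=f\}$ this forces $f$ to be affine along the $(y,u,t)$--projection of that line. Second, for a boundary point $(x_{0},y_{0},u_{0},t_{0})$ with $y_{0}\ne0$ the $(y,u,t)$--projections of the horizontal lines through it have directions $(b,c,\,2y_{0}a-2x_{0}b)$ with $(a,b,c)$ free, hence fill all of $\R^{3}$. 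So it suffices to show that through each boundary point with $y\ne0$ there pass horizontal lines of $\partial E$ in every direction of $\R^{3}$: then $f$ is affine along every short segment through every point of the dense open set $\{y\ne0\}$, and a routine limiting plus midpoint--affineness argument, using continuity of $f$, upgrades this to ``$f$ affine on $\Omega'$''.

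The heart of the proof, and the main obstacle, is to produce enough lines lying in $\partial E$. One route is intrinsic, via Lemma~\ref{unionline}: through each $P\in\partial E$ and inside each $2$--plane $V\subset H_{P}$ there is a line of $\partial E$; one starts at $0$ (where every horizontal line lies in $\{t=0\}$), passes to a suitably chosen nearby boundary point $P$ with $y_{P}\ne0$, whence the lines of $\partial E$ already reach directions with nonzero $t$--component, and iterates, each freshly produced line supplying new boundary points from which to emit further lines, while checking that the construction stays in $\Omega_{0}$ and eventually exhausts all directions. A second route exploits that $U$--lines are always horizontal: the slices $\{u=u_{0}\}$ are copies of $\H$ preserved by the horizontal lines they contain, $E\cap\{u=u_{0}\}$ is monotone there, so by Cheeger--Kleiner~\cite{CheegerKleiner10} its boundary is locally a Euclidean plane $x=\lambda(u_{0})y+\mu(u_{0})t+\nu(u_{0})$ (non--characteristic, since $\operatorname{graph}(\psi)$ has no characteristic points), and one then invokes horizontal lines with nonzero $U$--component, together with Lemma~\ref{linea}, to force $\lambda,\mu,\nu$ to be affine in $u_{0}$.

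In either approach the delicate point is precisely this step — checking that the available lines genuinely cover all directions (respectively, that the slice planes glue affinely). A secondary, more routine, issue is the passage in the first paragraph from the intrinsic graph of $\psi$ to a genuine Euclidean graph $x=f(y,u,t)$, where the intrinsic Lipschitz regularity of $\psi$ is used; should one prefer to avoid it, the computations of the second and third paragraphs can be carried out directly on $\operatorname{graph}(\psi)$, at the price of heavier notation.
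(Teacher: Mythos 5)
Your skeleton (Theorem~\ref{grafici}, then slicing by $\{u=\text{const}\}$ and Cheeger--Kleiner in $\H$, then Lemma~\ref{linea} to propagate affineness in $u$) is exactly the paper's strategy, but both of your routes stop short of the actual difficulty. The decisive gap is at the end: ``each slice is a plane $x=\lambda(u)y+\mu(u)t+\nu(u)$'' together with ``affineness in $u$ for fixed $(y,t)$'' only gives a function that is affine in each group of variables \emph{separately}, i.e.\ multi-affine, and this leaves bilinear cross terms alive. After the bookkeeping (which the paper carries out in its Steps~2--4, using the intrinsic-graph form to kill the $ut$ cross term) the surviving obstruction is precisely a term $a_1uy$: the candidate boundary is the quadric $x=(a_0+a_1u)y+cu+bt$, which is \emph{not} a plane unless $a_1=0$. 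Eliminating $a_1$ requires a genuinely separate argument --- the paper's Step~5 exhibits two specific aligned points on this quadric whose midpoint must again lie on it, and this forces $a_1=0$. Your proposal declares victory at ``$\lambda,\mu,\nu$ affine in $u_0$'' and never confronts this term, so as written it proves only that $\p E$ is locally a ruled quadric.

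Your first route has an independent and fatal problem. For a point $P=(x_0,y_0,u_0,t_0)$ with $y_0\neq0$, the map $(a,b,c)\mapsto(b,c,2y_0a-2x_0b)$ from horizontal directions at $P$ to their $(y,u,t)$-projections is a linear bijection, so asking that horizontal lines of $\p E$ through $P$ realize \emph{every} projected direction of $\R^3$ is asking that \emph{every} horizontal line through $P$ lie in $\p E$, i.e.\ $H_P\subset\p E$. By Lemma~\ref{giroconto} that would force $\p E=H_P$, which under the hypothesis $(p,q,0,0)\in\Int(E)$ is exactly the excluded degenerate case; in the generic case the lines of $\p E$ through $P$ tangent to a $3$-plane form only a $2$-parameter family of directions. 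Moreover Lemma~\ref{unionline} supplies just one line per $2$-plane of $H_P$, far from all directions, so the sufficient condition you reduce to cannot be verified. A smaller point: converting the intrinsic graph of $\psi$ into a Euclidean graph $x=f(y,u,t)$ requires inverting $t\mapsto t+2y\psi(y,u,t)$, which mere continuity of $\psi$ does not give; the paper sidesteps this by extracting the Euclidean graph structure directly from the Cheeger--Kleiner classification of the slices, and you would do well to follow that order of operations.
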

If we assume without loss of generality that $(p,q)=(1,0)$, then the plane has equation $x=ay+cu+bt$.  

 \begin{proof}[Proof of Proposition~\ref{parte_facile}]

 Whithout loss of generality, we prove the statement in the case $(p,q)=(1,0)$.  By Theorem \ref{grafici}, 
 there is a neighborhood~$U$
 of $(0,0,0)\in\R^3$ and 
 a continuous function $\psi(y,u,t)$ on~$U$ such that 
 \begin{equation}\label{crullo} 
\begin{aligned}
&  \operatorname{epi}(\psi):= \Big\{ (s, y, u, t+2y s) : (y,u,t)\in U \quad s>\psi(y,u,t)  \Big\}
\subset \Int (E),
 \\ &
 \operatorname{graph}(\psi):=\Big\{ (\psi(y,u,t), y, u, t+2y \psi(y,u,t)) : (y,u,t)\in U \Big\}\subset\p E,
\end{aligned}
 \end{equation}
while $\operatorname{ipo} (\psi)\subset \Int (E^c)$.

\step{Step 1.} We show that for all $\ol u$ close to $0$, there are  $a(\ol u),$ $  b(\ol u)$, and $ c(\ol u)\in\R$ so that 
\begin{equation}\label{gollone}
\begin{aligned}
 \{(x,y,\ol u, t) & \in\R^4:   x >  a(\ol u) y   + b(\ol u)t + c(\ol u)\}
  \subset \Big(  E \cap \{u=\ol u\}\Big)
 \\ 
\subset & \{(x,y,\ol u, t)\in\R^4: x\geq   a(\ol u) y   + b(\ol u)t + c(\ol u)
 \}. 
\end{aligned}
\end{equation}

To accomplish the step, note that for all $\ol u\in \R$, the intersection $  E\cap \{ u=\ol u\} $ is a monotone    set in $\G_{\ol u}:=\{(x,y,\ol u, t):(x,y , t)\in\C\times\R\}$, which is isomorphic to the Heisenberg group.    Thus by \cite{CheegerKleiner10} there are two open    half-spaces  $\Sigma^+ ,\Sigma^-\subset \G_{\ol u}$ with a Euclidean plane as a  common boundary and such that 
$
 E\cap \G_{\ol u} \supset  \Sigma^+$  and $E^c\cap \G_{\ol u} \supset \Sigma^-$. It suffices to show that such half-spaces have the form~\eqref{gollone}.
Let   $m(\ol u)x 
 +a(\ol u) y   + b(\ol u)t + c(\ol u)< 0
$ be the inequality defining 
 $\Sigma^+$, where 
  $m(\ol u),$ $  a(\ol u),$ $ b(\ol u)$ and $c(\ol u)$ are real numbers.
By~\eqref{crullo}, for small $\ol u$, we know that $(\psi(0, \ol u, 0)+1, 0, \ol u, 0)\in\Int(E)$ and 
$(\psi(0, \ol u, 0)-1, 0, \ol u, 0)\in\Int(E^c)$. Therefore we may assume $m(\ol u)=-1$ and  Step~1 follows.

\step{Step 2.}  After some easy computations (omitted) involving comparison between~\eqref{gollone} 
and~\eqref{crullo},  we get
\begin{equation}\label{golletto} 
\begin{aligned}
 \psi(y,u,t)= \frac{a(u)y + b(u) t + c(u)}{1-2 b(u) y}\qquad \text{for all $y,u,t$ close to $0$.}
\end{aligned}
\end{equation}

\step{Step 3.} The function $\psi$ is affine in $u$, i.e. 
  has the form
\begin{equation}\label{galeotto} 
 \psi(y,u,t)= A(y,t)+B(y,t)u,\qquad \text{for all $u,y,t$ close to the origin,}
\end{equation} 
where $A(y,t)$ and $ B(y,t)$ are suitable functions. 

To show this claim, 
let us look at  a pair of aligned points
\begin{equation*}
 P= (\psi(y,u,t), y, u, t+2y\psi(y,u,t))\quad \text{and } Q=
 (\psi(y,v,t), y, v, t+2y\psi(y,v,t))\in\p E
\end{equation*}
and at the  following line  containing $P$ and $Q$:
\begin{equation*}
 \begin{aligned}
\gamma (\s) & =(\psi(y,u,t), y, u, t+2y\psi(y,u,t))\cdot \big(\sigma[\psi(y,v,t)-\psi(y,u,t)], 0, \s
(v-u),0\big)
\\&=\Big( \psi(y,u,t)+\s[\psi(y,v,t)-\psi(y,u,t)], y, u+\s(v-u),
\\&\qquad \qquad t+2y\psi (y,u,t)  +2\s y[\psi(y,v,t)-\psi(y,u,t)]\Big).
\end{aligned}
\end{equation*}
Since $\gamma(0)$ and $\gamma(1)$ belong to $\p E$,  $\gamma(\sigma)\in \p E$  for all $\s\in \R$, by Lemma~\ref{linea}. If $\sigma\in[0,1]$ we are close to the origin, and the point $\gamma(\s)$ should belong to the graph of $\psi$. Then it must have the form 
$(\psi(y',u', t'), y',u', t'+2y'\psi(y',u', t'))$. Comparing the four coordinates, we discover that $y'=y$, $u' = u+\s(v-u)$, $t'=t$ and ultimately
\begin{equation*}
 \psi(y, u+\s (v-u),t)=\psi(y,u,t)+\s(\psi(y,v,t)-\psi(y,u,t)),
\end{equation*}
for all $y,u,v,t$ close to the origin and $\s\in[0,1]$. 
Thus~$\psi$ is affine in $u$ for all fixed~$y,t$.

\step{Step 4.} We show that for suitable $a_0,a_1,b,c\in\R$, we have
\begin{equation}\label{coccobello} 
 \psi(y,u,t)= \frac{(a_0+a_1 u)y+ b  t + cu}{1-2by}.
\end{equation}
To show such statement, start from identity
\begin{equation}\label{idess} 
\psi(y,u,t)= \frac{a(u)y + b(u) t + c(u)}{1-2 b(u) y}
=A(y,t)+ B(y,t)u
\end{equation} 
for all $y,t,u$ close to the origin. By linearity in $t$, we can write the right-hand side as 
$f_0(y)+f_1(y)t +[g_0(y)+g_1(y)t]u$, where the form of $f_0, f_1$ can be obtained letting $u=0$ in 
\eqref{idess}. Ultimately, \eqref{idess} can be written in the form
\begin{equation}\label{sss} 
\psi(y,u,t)= \frac{a(u)y + b(u) t + c(u)}{1-2 b(u) y}
=\frac{a_0 y+b_0t}{1-2b_0y}+ [g_0(y)+g_1(y)t]u,
\end{equation} 
where $a_0:=a(0)$, $b_0:=b(0)$ and $c_0:=c(0)=\psi(0,0,0)=0$. 
Letting $y=t=0$ we discover 
that $c(u)= g_0(0)u=: c_1 u$. Evaluating also at $y=0$, we find 
\begin{equation*}
 b(u)t+   c_1u   = b_0 t+[c_1+g_1(0)t]u=:b_0 t+ c_1 u+ b_1 tu.
\end{equation*}
Therefore,
\begin{equation}\label{dentino} 
 \psi(y,u,t)=\frac{a(u)y+(b_0+b_1 u)t + c_1 u}{1-2(b_0+b_1 u)y}
 =\frac{a_0 y+b_0t}{1-2b_0y}+ [c_1+\wh g_0(y)+(b_1+\wh g_1(y))t]u,
\end{equation} 
where $\wh g_0$ and $\wh g_1$ are suitable functions vanishing
at $0$.
Evaluating the identity~\eqref{dentino} at $t=0$, comparing left-hand side and right-hand side, we see that $\wh g_0$ is rational and smooth at $0$. Expanding at the first order in $y$ both sides with $t=0$, we see that
\begin{equation*}
 (c_1u + a(u)y) (1+2(b_0+b_1 u)y + O (y^2))=
 a_0 y+O(y^2) + c_1 u + \wh g_0'(0)uy,
\end{equation*}
which gives the identity
$
 a(u)+ 2 c_1(b_0 + b_1 u)u= a_0 +\wh g_0'(0)u
$.
Therefore we conclude  that $a(u)=: a_0+a_1u-2b_1c_1 u^2$. Thus the left-hand side of \eqref{dentino} becomes
\begin{equation*}
  \psi(y,u,t)=\frac{(a_0+a_1 u-2b_1 c_1 u^2)y
  +(b_0+b_1 u)t + c_1 u}{1-2(b_0+b_1 u)y}.
\end{equation*}
But by Step 3  (see the right-hand side  of \eqref{dentino}), $\psi $ should be affine in $u$ 
for all fixed $y,t$. This forces $b_1=0$, completing the proof of  Step 4.

\step{Step 5.} \label{step5} 
We show that the coefficient $a_1$ in \eqref{coccobello} must vanish. To see that, we 
 first observe (omitted computations) that     locally  the $X$-graph of $\psi$ agrees with the Euclidean graph
\begin{equation*}
\Sigma:=\Big\{ \big((a_0+a_1u)y +c u+bt, y,u,t \big) : \quad \text{$y,u,t$ close to the origin }\Big\}.
\end{equation*}

Let $y\neq 0$ be a  number close to $0$ and consider a pair of points
\begin{equation*}
 P=((a_0+c)y+a_1 y^2, y,y, 0)\quad \text{ and }\quad 
 Q=\big( -(a_0+c)y+a_1 y^2+bt, -y -y ,t\big).
\end{equation*}
Both $P$ and $Q$ belong to $\Sigma$. A computation shows that the choice $t=\frac{4a_1y^3}{1-2by}$ ensures that $P$ and  
\[
Q=\Big(  -(a_0+c)y+a_1y^2 \Big(1+\frac{4by}{1-2by}  \Big), -y,-y,\frac{4a_1 y^3}{1-2by} \Big)
\]  are aligned and both in~$\Sigma$.  Then the whole segment connecting $P$ and $Q$ belongs to $\Sigma$. In particular   
\begin{equation*}
 \frac{P+Q}{2}=\Big( \frac{a_1y^2}{1-2by},0,0, \frac{2 a_1 y^3}{1-2by} \Big)\in\Sigma.
\end{equation*}
Then $\frac{a_1 y^2}{1-2by} =b\cdot \frac{2 a_1 y^3}{1-2by} $, which forces $a_1y^2=0$ for all $y$ and ultimately  $a_1=0$.
\end{proof}

   \begin{proposition}[Monotone $(pX+qY)$-epigraphs (global statement)]\label{pappardella} 
   Let  the  hypotheses of Proposition \ref{parte_facile}  with $(p,q)=(1,0)$ be satisfied and let $x=ay+bt+cu $ be the equation 
   of the plane $\Sigma $ appearing in   Proposition~\ref{parte_facile}. Then we have $\Sigma=\p E$. 
   \end{proposition}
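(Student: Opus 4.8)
The plan is to reduce everything to the single inclusion $\p E\subseteq\Sigma$. Write $H^{+}=\{x>ay+bt+cu\}$ and $H^{-}=\{x<ay+bt+cu\}$, so that $\R^{4}=H^{+}\sqcup\Sigma\sqcup H^{-}$ with $H^{\pm}$ open and \emph{connected}. By Proposition~\ref{parte_facile} (and the explicit form of $\psi$ obtained in its proof) there is a neighbourhood $\Omega$ of the origin with $\p E\cap\Omega=\Sigma\cap\Omega$, $H^{+}\cap\Omega\subseteq\Int E$ and $H^{-}\cap\Omega\subseteq\Int(E^{c})$. Now suppose it is already known that $\p E\cap(H^{+}\cup H^{-})=\varnothing$; then $H^{+}=(H^{+}\cap\Int E)\sqcup(H^{+}\cap\Int E^{c})$ expresses the connected set $H^{+}$ as a disjoint union of two relatively open sets, and since $H^{+}\cap\Omega$ meets $\Int E$ we conclude $H^{+}\subseteq\Int E$; symmetrically $H^{-}\subseteq\Int(E^{c})$, whence $\Sigma=\R^{4}\setminus(H^{+}\cup H^{-})=\R^{4}\setminus(\Int E\cup\Int E^{c})=\p E$. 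Thus it suffices to prove $\p E\subseteq\Sigma$.

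For this I argue by contradiction: assume $P\in\p E$ lies strictly on one side of $\Sigma$, say $P\in H^{+}$, so that $\delta_{P}:=x_{P}-ay_{P}-bt_{P}-cu_{P}>0$, and I aim to deduce $P\in\Int E$. The mechanism is Proposition~\ref{trasversale}: if one can find a horizontal line $\ell$ through $P$ meeting $\Sigma$ \emph{transversally} at a point $R$ that lies inside the known neighbourhood $\Omega$, then on one side of $R$ along $\ell$ one enters $\operatorname{epi}(\psi)\subseteq\Int E$ — a surface transversal to $\ell$ and contained in $E$ — and on the other side one reaches a point $A\in\ell\cap\operatorname{ipo}(\psi)\subseteq\ell\cap E^{c}$; Proposition~\ref{trasversale}(ii) applied to the surface and the point $A$ then puts the whole component of $\ell\setminus\{R\}$ not containing $A$ inside $\Int E$, and $P$ lies in that component because $P\in H^{+}$. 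As a first try one takes $\ell$ in the direction $e_{1}=(1,0,0)$ through $P$: a one-line computation shows it is transversal to $\Sigma$ exactly when $1-2by_{P}\neq0$, and it meets $\Sigma$ at parameter $s^{\ast}=-\delta_{P}/(1-2by_{P})$. The difficulty is that for $P$ far from the origin this crossing point need not lie in $\Omega$, so a single horizontal segment does not reach the region where the structure is known. I would handle this by joining $P$ to $\Omega$ through a finite chain of horizontal segments — propagating ``$\in E$'' along the initial segments by convexity of $E$ (and ``$\in E^{c}$'' by convexity of $E^{c}$) and invoking Proposition~\ref{trasversale} only on the last one — selecting the intermediate directions by a short parameter count, exploiting in particular that $\partial_{u}$ is itself horizontal (so the $u$-coordinate is freely movable) to steer the chain into $\Omega$. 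The exceptional locus $\{1-2by=0\}\cap\Sigma$, where the $e_{1}$-direction becomes tangent to $\Sigma$ — the characteristic phenomenon already responsible for $W$ being only local in Theorem~\ref{grafici} — is lower dimensional in $\Sigma$ and is eliminated afterwards by continuity, since $\p E$ is closed and $\p E\subseteq\Sigma$ will already hold on the complement of that locus.

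The main obstacle is exactly this passage from the local to the global picture: making horizontal lines, or short chains of them, issued from arbitrarily distant boundary points reach the neighbourhood $\Omega$ on which Proposition~\ref{parte_facile} describes $\p E$, and controlling limits so that $\p E$ cannot ``creep'' into $H^{\pm}$ — the latter being where the uniformity of the cone property of Theorem~\ref{conogelato} (valid for every base point of $\p C\cap B(P,\varepsilon)$, not merely one) is used. Granting these, the remaining ingredients are the reduction above together with Lemma~\ref{linea}, Proposition~\ref{trasversale}, and the local normal form for $\p E$ near the origin already established in Proposition~\ref{parte_facile}.
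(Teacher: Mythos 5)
Your opening reduction (if $\p E\subseteq\Sigma$ then $H^{+}\subseteq\Int E$, $H^{-}\subseteq\Int(E^c)$ by connectedness, hence $\Sigma=\p E$) is correct and is essentially the paper's Step 3. The local mechanism you describe — a horizontal line through $P$ crossing $\Sigma$ transversally inside the known neighbourhood $\Omega$, followed by Proposition~\ref{trasversale} — is also sound \emph{when the crossing point lies in $\Omega$}. But the entire difficulty of the proposition is concentrated in the step you leave as a sketch: for $P$ far from the origin the crossing point is not in $\Omega$, and your proposed remedy (``a finite chain of horizontal segments, propagating $\in E$ by convexity, selected by a short parameter count'') is not a proof. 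It is not even the right tool: $P$ is only assumed to lie in $\p E$, so there is no membership in $E$ or $E^c$ to propagate along the initial segments of a chain, and no construction is given of the chain or of why it terminates in $\Omega$. This is a genuine gap, and it is exactly where the paper does something different.

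The paper's mechanism for ``action at a distance'' is the following. First, from the local description it manufactures a whole \emph{family of full horizontal lines $\gamma_1$ contained in $\p E$} passing through points $(bt,0,0,t)$ arbitrarily close to the origin (Step 1, using Lemma~\ref{linea} to extend boundary segments to boundary lines). Second, if $P\in\p E\setminus\Sigma$, Lemma~\ref{unionline} produces a full horizontal line $\gamma_2\subset\p E$ through $P$ with direction in the $(x,y)$-plane. Third, one chooses the parameters of $\gamma_1$ so that the projections $\pi_\H\gamma_1$ and $\pi_\H\gamma_2$ are skew (or parallel with distinct projection) in $\H$; the alignment property of such lines then yields, for $\sigma$ as close to $0$ as desired, a pair of aligned points $\gamma_1(\sigma)$ and $\gamma_2(s_\sigma)$, both in $\p E$. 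By Lemma~\ref{linea} the whole connecting line $\gamma_3$ lies in $\p E$; but $\gamma_3$ is transversal to $\Sigma$ and meets it arbitrarily close to the origin, so it must immediately enter $\Int(E)$ by~\eqref{iona} — a contradiction. A single horizontal line, obtained from two aligned boundary points on skew boundary lines, thus replaces your hypothetical chain; this use of Lemma~\ref{unionline} together with the skew-line alignment property is the idea missing from your argument. (Your deferral of the characteristic locus $\{1-2by=0\}$ to a closing continuity argument would be acceptable only once the main step is in place; in the paper it is handled instead by perturbing the parameters of $\gamma_1$ so that the two projected lines are genuinely skew.)
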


Before the proof, recall  the following geometric property 
concerning a given pair of lines~$\ell_1$ and~$\ell_2$  in the Heisenberg group parametrized by $ \Gamma_1(s)=(z_1, t_1)\cdot (s\z _1, 0)$ and $\Gamma_2(s)=(z_2, t_2)\cdot (s\z _2, 0)$.
See  the discussion in \cite[p.~343]{CheegerKleiner10}. If 
$\ell_1$ and $\ell_2$ are   \emph{skew} or \emph{parallel with distinct projection}, then for all $\sigma\in\R$,  except at least a singular value $\s_0$, there is $s_\s\in\R$ such that $\Gamma_1(\s)$ and $\Gamma_2(s_\s)$ are aligned.  

For the sake of completeness, recall  that two lines~$\ell_1$ and~$\ell_2$  parametrized by $ \Gamma_1(s)=(z_1, t_1)\cdot (s\z _1, 0)$ and $\Gamma_2(s)=(z_2, t_2)\cdot (s\z _2, 0)$ are \emph{parallel} if $\z_1 $ and $\z_2$ are linearly dependent and nonzero. They are \emph{parallel with distinct projection} if they are parallel and their  projections on the plane $x,y$ are (parallel and)   different lines in the plane. They are \emph{skew} if they are not parallel and have empty intersection.

   \begin{proof}[Proof of Proposition \ref{pappardella}]
    Assume without loss of generality that the equation of $\Sigma$ is $x=cu+bt$. This can be obtained after 
    a rotation in the variables $(x,y)$. We know by Proposition~\ref{parte_facile} that there is ~$\e>0$ such that
\begin{equation}\label{iona} 
   \begin{aligned}
A:=&  \{( cu+bt,y, u, t ) : |y|, |u|, |t|<\e  \}\subset \p E \quad\text{ and}                                                                                         
\\&     
 \{( cu+bt,y, u, t )\cdot(s,0,0,0): |y|, |u|, |t|<\e\quad 0<s<+\infty\}\subset \Int(E),                                                                                                                                                                                                                                                                                 
     \end{aligned}
\end{equation} 
while the same set with $s<0$   is  contained in $\Int (E^c)$. 

\step{Step 1.} Consider points of the form  $(bt, 0, 0, t)\in \Sigma$. These points are in $\p E$ as soon as $|t|<\e $. Let $\eta,\omega\in \R$ with $\abs{\eta}<\frac{1}{2b}$ and look at the line
\begin{equation}\label{lalli} 
\begin{aligned}
 \gamma_1(\s)&= (bt, 0, 0, t)\cdot ([c\omega-2b^2 t\eta]\s, \sigma\eta,\sigma\omega, 0)
 \\&= (bt(1-2b \eta\s)+c\s\omega, \s\eta,\s\omega, t(1-2b\eta\s) ),\quad\text{with }\s\in\R.
\end{aligned}
\end{equation}
If $\eta$ and $\omega$ are  close to zero,   we have $\gamma_1(0)$ and $\gamma_1(1)\in \p E$. Therefore $\gamma_1(\R)\subset \p E$. Observe also that, by~\eqref{iona}, if $t,\eta,\omega,\s$ are sufficiently close to~$0$, the point~$ \gamma_1(\s)$ belongs to the small surface~$A$ appearing in \eqref{iona}.

\step{Step 2.} We claim that  $\p E$ is contained in $\Sigma$.

  Assume  by contradiction that a point 
$P= (\ol x, \ol y, \ol u, \ol t)$ belongs to $\p E\setminus \Sigma$. By Lemma~\ref{unionline}, there is~$(p,q )\neq (0,0 )$ such that   
\begin{equation}\label{radio} 
 \gamma_2(s):=(\ol x, \ol y, \ol u, \ol t)\cdot (ps, qs, 0,0)\in \p E\quad\text{for all $s\in\R$.}
\end{equation}
We discuss first the case $(b,c)\neq (0,0)$. In this case, it is easy to see that we can find $t\in \left]\e,\e\right[$, $\eta,\omega\in \R$ such that the vectors $(p,q)$ and 
  $(c\omega-2b^2 t\eta, \eta)$ appearing   
  in~\eqref{radio} and  in~\eqref{lalli} respectively,  are independent.  After this choice, if we 
  indicate by $\pi_\H(x,y,u,t)=(x,y,t)$, the lines  $\pi_\H\gamma_1$ and $\pi_\H \gamma_2$ are not parallel 
  in $\H$. Taking if needed  a small modification of $t$, we may assume that they are also skew. Therefore, by the property discussed before the proof, we can find $\s$ as close as we wish to the origin and a 
  corresponding $s_\s\in\R$ such that $\pi_\H \gamma_1(\s)$ and $\pi_\H\gamma_2(s_\s)$ are aligned. Denote 
  by $
  \gamma_3$ the line   connecting $\gamma_1(\s)=\gamma_3(0)$  and $\gamma_2(s_\s)=\gamma_3(1)$. Our line~$\gamma_3$ is contained in $\p E$,
  is transversal to $\Sigma$ and  touches $\Sigma$ at a point very close to the origin. This contradicts the second line of~\eqref{iona}  and  concludes Step~2, at least in the case $(b,c)\neq (0,0)$.  

In order to discuss Step 2 for $b=c=0$, note that in this case  $\Sigma $ is the plane $x=0$ and we can start from the inclusion
$
\{(0, y, u, t): |t|<\e,\; (y,u)\in\R^2\}\subset\p E
$. 
Let us consider the curve $\gamma_1(\s)=(0,\s,0,t)$, take $(\ol x,\ol y,\ol u,\ol t)\in\p E$ with $\ol x\neq 0$
and the corresponding line $\gamma_2 $
 of the form~\eqref{radio}. If $p=0$, then, since $\ol x\neq 0$, $\pi_\H\gamma_1$ and $\pi_\H\gamma_2$ are parallel with distinct projection and we can conclude as in the case $(b,c)\neq(0,0)$ above. If instead, $p\neq 0$, the lines $\pi_\H\gamma_1$ and $\pi_\H\gamma_2$ are not parallel. Then, changing if needed the  choice of $t$ we may assume that they are skew. In either case, we get the same contradiction of case $(b,c)\neq (0,0)$.

\step{Step 3.} We have $\Sigma=\p E$. To show this claim, note that we already know that $\p E\subset\Sigma $. If $\Sigma\neq \p E$, then $(\p E)^c $ is open and connected. Therefore, it is contained either  in $\Int (E)$ or in $\Int (E^c)$.  Assuming the former and looking at the line $\gamma(s)= (s,0,0,0)$, we see that  both $\gamma(1)$ and $ \gamma(-1)$ belong to $  \Int (E)$, getting a  a contradiction. 
    \end{proof}

  \subsection{Classification of monotone \texorpdfstring{$U$-epighraphs}{HxR}}
 \label{4.3} 
   In this section we discuss case \ref{capperi} of Lemma \ref{giocattolino}.
 \begin{proposition}[Monotone $U$-epigraphs (local statement)]\label{ugra} 
  Let $(0,0,0,0)\in\p E$ and let $(0,0,1,0)\in \Int E$. Then there are $\e>0$,
  a 
  neighborhood $W$ of the origin in $\R^3$,
  a linear function $\psi:W\to \R$ with $\psi(0,0,0)=0$
such that $  \operatorname{graph}(\psi) \subset\p E$, 
  while  $ \operatorname{epi}(\psi)\subset \Int (E) $
and   $ \operatorname{ipo}(\psi) \subset \Int(E^c) $.
 \end{proposition}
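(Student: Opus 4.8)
The plan is to mimic the proof of Proposition~\ref{parte_facile}, exploiting that $\G=\H\times\R_u$ is a \emph{direct} product and that the horizontal field $U=\p_u$ commutes with $X$ and $Y$; this makes a $U$-graph simply a Euclidean graph of a function on the Heisenberg factor. First I would apply Theorem~\ref{grafici} with $\xi$ the $U$-direction $(0,0,1)\in Z=\R^3$: since $(0,0,0,0)\in\p E$ and $(0,0,1,0)\in\Int(E)$ lie on the horizontal line through the origin in direction $U$, this yields an open neighbourhood $W$ of the origin in $\xi^{\perp}\times T\simeq\R^3_{(x,y,t)}$ and a continuous $\psi\colon W\to\R$ with $\psi(0,0,0)=0$, $\operatorname{graph}(\psi)=\{(x,y,\psi(x,y,t),t):(x,y,t)\in W\}\subset\p E$, $\operatorname{epi}(\psi)=\{(x,y,u,t):(x,y,t)\in W,\ u>\psi(x,y,t)\}\subset\Int(E)$ and $\operatorname{ipo}(\psi)\subset\Int(E^c)$; in particular, inside the cylinder $\{(x,y,u,t):(x,y,t)\in W\}$ one has $\p E=\operatorname{graph}(\psi)$.

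Next comes a compatibility step. Since $U$ commutes with $X,Y$, two points of $\G$ are horizontally aligned exactly when their projections to $\H$ are aligned (the $u$-coordinates play no role); hence any two points of $\operatorname{graph}(\psi)$ lying over a horizontal line $\ell\subset W$ of $\H$ are aligned in $\G$. By Lemma~\ref{linea} the whole $\G$-line joining them lies in $\p E$, so, being contained in the cylinder above near those two points, it lies on $\operatorname{graph}(\psi)$; a direct computation identifies that line with the graph over $\ell$ whose $u$-coordinate interpolates affinely. Thus $\psi$ is affine along every horizontal line of $\H$ contained in $W$, equivalently $X^{2}\psi=Y^{2}\psi=(XY+YX)\psi=0$ in the natural weak sense.

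The crux --- which I expect to be the main obstacle --- is to pass from this to: $\psi$ is the restriction of a Euclidean linear function. If $\psi$ were $C^{2}$ the computation is short: the three equations say the Hessian of $\psi$ vanishes on the horizontal $2$-planes; solving $X^{2}\psi=0$ gives $X\psi=F(y,\,t-2xy)$, solving $Y^{2}\psi=0$ gives $Y\psi=G(x,\,t+2xy)$, and imposing the curl-free (Schwarz) conditions for the would-be gradient $(\psi_{x},\psi_{y},\psi_{t})$ together with $(XY+YX)\psi=0$ forces $F$ and $G$ to be affine and then every second derivative of $\psi$ to vanish. For the a priori merely continuous $\psi$ produced by Theorem~\ref{grafici} I would run the finite-difference version: affineness of $\psi$ on each horizontal plane $\{t=t_{0}\}$, whose horizontal $\H$-lines are those through the point $(0,0,t_{0})$, makes $\psi(\cdot,\cdot,t_{0})$ positively homogeneous of degree one about the origin; combining the exact affine expressions for $\psi$ along the $X$-lines $s\mapsto(x_{0}+s,\,y_{0},\,t_{0}+2y_{0}s)$ and the $Y$-lines $s\mapsto(x_{0},\,y_{0}+s,\,t_{0}-2x_{0}s)$ with this homogeneity, and matching coefficients, one eliminates unknowns to obtain $\psi(x,y,t)=A(t)+B(t)x+C(t)y$ with $A$ affine, $C$ constant and $B$ affine, after which one last $X$-line identity (the finite analogue of $(XY+YX)\psi=0$) forces $B$ constant; since $\psi(0,0,0)=0$, $\psi$ is linear. (An alternative is to invoke the intrinsic Lipschitz regularity of $\psi$ mentioned after Theorem~\ref{grafici} together with an almost-everywhere differentiability argument and a bootstrap.)

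Finally, writing $\psi=\psi_{0}|_{W}$ for this linear $\psi_{0}$, one shrinks $W$ and picks $\e>0$ so that the graph, epigraph and ipograph inclusions above persist on the shrunk neighbourhood; this gives the statement.
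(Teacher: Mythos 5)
Your opening moves coincide with the paper's: apply Theorem~\ref{grafici} in the $U$-direction to get a continuous $\psi$ whose graph is $\p E$ in a cylinder, and then use Lemma~\ref{linea} together with the direct-product structure to conclude that $\psi$ is affine along every horizontal line of $\H$ (this reduction is correct, and your observation that alignment in $\H\times\R$ projects to alignment in $\H$ is sound). But what remains at that point is exactly the hard part of the paper's proof --- the classification of continuous \emph{horizontally affine} functions on $\H$, which the paper flags as being of independent interest --- and your treatment of it is a sketch with a concrete gap. First, affineness along the radial lines of each level set $\{t=t_0\}$ (the only horizontal lines lying inside that level set) gives $\psi(\la z,t_0)-\psi(0,t_0)=\la\bigl(\psi(z,t_0)-\psi(0,t_0)\bigr)$, i.e.\ $1$-homogeneity about the origin; this does \emph{not} give linearity in $(x,y)$ (any odd function of the angle is compatible), so the claimed intermediate form $\psi=A(t)+B(t)x+C(t)y$ does not follow from what you state, and ``matching coefficients'' along $X$- and $Y$-lines is not shown to close the argument. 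Second, and more fundamentally, nothing in your sketch establishes affineness of $t\mapsto\psi(z,t)$ for fixed $z$: no horizontal line through a point of the vertical axis over $z$ leaves the level set of $t$, so the $t$-dependence is only reachable indirectly. This is precisely where the paper invests its main effort (Step~1 of its proof): following \cite{Rickly06} and \cite[Lemma~4.10]{CheegerKleiner10}, it takes two boundary lines through the same $z$ at different heights $t\neq\t$ and in different horizontal directions $\theta\neq\phi$, finds the aligned pairs $\gamma_3(s)$, $\gamma_4(-\eta/s)$ with $\eta=\frac{\t-t}{2\sin(\phi-\theta)}$, and symmetrizes over $s=\pm1$ so that midpoints land back on the vertical over $z$, yielding $\psi(z,\cdot)$ affine and hence $\psi(z,t)=m(z)t+q(z)$. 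Only after that do the homogeneity of $m$ and $q$ and a further functional identity along general horizontal lines (the analogue of your ``one last $X$-line identity'') force $m$ constant and $q$ linear. Your proposal has no counterpart to this two-heights construction, and without it the passage from ``affine along every horizontal line'' to ``linear'' is unproved.

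The $C^2$ computation you mention cannot be used as a fallback either: Theorem~\ref{grafici} only gives continuity (intrinsic Lipschitz at best), and upgrading regularity is not addressed. So the proposal is a correct reduction plus an incomplete core; to repair it you would need to supply the classification of continuous horizontally affine functions on $\H$, for instance by the paper's Steps~1--3.
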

By the direct product structure, Euclidean and intrinsic $U$-graphs are the same.
As a by-product of the following proof, we get a classification of all ``horizontally affine'' functions $\psi:\H\to\R$, i.e. all functions    
such that $\psi((z,t)\cdot(\la\z,0))-\psi(z,t)=\lambda[\psi((z,t)\cdot( \z,0)-\psi(z,t)]$ for all $\la,z,t,\z$. This classification could have some independent interest.

 \begin{proof}By Theorem \ref{grafici} the boundary of $E$ is locally an  $U$-graph. 
Namely, there is a neighborhood $W$ of the origin in $\R^3$ such that
 \begin{equation}\label{garullo} 
\begin{aligned}
 & \{(x,y,\psi(x,y,t), t): (x,y,t)\in W\} \subset \p E,
 \\&  \{(x,y,u, t):  (x,y,t)\in W\quad  +\infty>u>\psi(x,y,t)\}\subset  \Int (E)
 \\&  \{(x,y,u, t):  (x,y,t)\in W\quad -\infty< u <\psi(x,y,t)\}\subset  \Int( E^c).
\end{aligned}
 \end{equation}
Let $\Omega=W\times \R$.   Clearly, any point $(x,y,u,t)\in\Omega$  
 belongs to one and only one of the three sets in the left-hand sides  above. 

  The proof is articulated into  three steps.

  \step{Step 1.}  We show that 
  \begin{equation*}
   \psi(z,t)= m(z)t+q(z),
  \end{equation*}
for suitable functions $m, q$ defined in a neighborhood of the origin in $\R^2$.

 Let $P=(x,y,\psi(x,y,t), t)=(z,\psi(z,t),t)\in\p E\cap\Omega$. Let us look at the horizontal plane
  \begin{equation*}
   H_P=\Big\{(x,y,\psi(x,y,t), t)\cdot (\xi,\eta, u,0)=P\cdot 
   (\xi,\eta, u,0): 
    (\xi,\eta, u)\in\R^3 \Big\}.
  \end{equation*}
  Inside this plane, given an angle $\theta$ consider the two-dimensional subspace
  \begin{equation*}
   A_\theta:= \Big\{ P \cdot (\la e^{i\theta}, u,0): 
    (\la,u)\in\R^2 \Big\}.
  \end{equation*}
  We claim that  $ A_\theta\cap \p E$ consists exactly of a line. More precisely, 
  there is $\alpha(x,y,t,\theta )\in\R$ such that
  \begin{equation*}
   A_\theta\cap \p E =\Big\{ (x,y,\psi(x,y,t),t)\cdot (s\cos\theta, s\sin\theta, s\alpha(x,y,t,\theta),0) 
   :s\in\R\Big\}. 
  \end{equation*}
To show the claim,  note that Lemma~\ref{unionline} ensures that   $A_\theta\cap \p E$  contains at least a line passing for~$P$.
We shall show that assuming that there are two different  lines both contained in~$A_\theta$ and both containing $P$, we get a contradiction.  Let $\ell_1$ and $\ell_2$ be such lines. None of them can  have the form $  \{P\cdot (0,0, s,0):s\in\R\}$, because we know that $P\cdot (0,0, s,0)\in \Int (E)$ for all  $s>0$. Then we can write  
\begin{equation*}
 \ell_j=\Big\{\gamma_j(s):=P\cdot (se^{i\theta}, \a_j s,0): s\in\R\Big\}\quad \text{for $j=1,2$,}
\end{equation*}
where $\a_1\neq \a_2$. Since $\a_1\neq \a_2$, we find for all $s\neq 0$ a couple of distinct aligned points $\gamma_1(s)$ and $\gamma_2(s)$. Then, the line   connecting them,
\[
 \ell_s =\{ (z,\psi(z,t),t)\cdot (se^{i\theta}, u,0):u\in\R\Big\}
\]
is entirely contained in $\p E$, by Lemma~\ref{linea}. Taking $u=1$ and $s\neq 0$ very close to $0$,  we find  points in $\p E$    as close as we wish to the point $(z,\psi(z,t)+1, t)\in\Int (E)$. This gives a contradiction and proves the claim.

To accomplish Step 1, fix $(x,y)=z$ close to the origin and consider $\theta\neq \phi$ ($\operatorname{mod}\pi$), and $\tau\neq t$. The  lines provided by the previous construction are
\begin{equation*}
\begin{aligned}
&\big\{  \gamma_3(s):=(z,\psi(z,t), t)\cdot (se^{i\theta}, s\alpha_{z}(t,\theta),0): s\in\R\big\}\subset\p E
\text{ and }
\\ &\big\{ \gamma_4(\s):= (z,\psi(z,\t), \t)\cdot (\s e^{i\phi}, \s\alpha_{z} (\t,\phi ) ,0): \s\in\R\big\}\subset\p E,
\end{aligned}
\end{equation*}
where we wrote $\a_z(t,\theta)=\alpha(x,y,t,\theta)$. Following the ideas of (the proof of) \cite[Theorem~1.2]{Rickly06} and the similar argument in  \cite[Lemma~4.10]{CheegerKleiner10},
we look for pairs of aligned points  $\gamma_3(s)$ and $\gamma_4(\s)$. A short calculation shows that  $\gamma_3(s)$ is aligned with $\gamma_4(-\frac{\eta}{s})$, for all $s\neq 0$, where $\eta:=\frac{\t-t}{2\sin(\phi-\theta)}$. Fixed any  $s>0$ (also $s=1$ does the job), the line $\gamma_5$ connecting $\gamma_3(s)$ and $\gamma_4(-\eta/s)$ is
\begin{equation}\label{cucchi} 
\begin{aligned}
 \gamma_5(\la) &= (z,0,0)  \cdot(0, \psi(z,t), t)\cdot (se^{i\theta}, s\a_z(t,\theta), 0) 
 \\&
 \qquad \cdot \Big(-\la\Big ( \frac{\eta}{s}e^{i\phi}+s e^{i\theta} \Big), 
 \la\Big( \psi(z,\t)-\psi(z,t)-\frac{\eta}{s}\a_z(\t,\phi)-s\a_z(t,\theta)\Big),0 \Big)
 \\&=:(z,0,0)\cdot \Big(\wh z_5(\la), \psi(z,t)+\la [\psi(z,\t) -\psi(z,t)] +\wh u_5(\la), t+\la(\t-t)\Big),
\end{aligned}
\end{equation}
where 
\begin{equation}\label{klei} 
\begin{aligned}
\wh z_5(\la)&= se^{i\theta}-\la\Big(\frac{\eta}{s} e^{i\phi} +se^{i\theta} \Big),  \quad \wh u_5(\la) = s\a_z(t,\theta) -\la\Big(\frac{\eta}{s}\a_z(\t,\phi)+  s\a_z(t,\theta) \Big) ,
\end{aligned}
\end{equation}
while the form of the last coordinate follows from the choice of $\eta$. Note that both $\wh z_5(\la)$ and $\wh u_5(\la)$ change sign, if we change $s=1$ with $ s=-1$. Therefore  if we denote by $\gamma_6 $ the curve in~\eqref{cucchi} and~\eqref{klei} obtained with $s=-1$, the points $\gamma_5(\la)$ and $\gamma_6(\la)\in\p E$ are aligned for all~$\la\in\R$. Then their intermediate point belongs to~$\p E$   for all~$\lambda$. Ultimately
\begin{equation*}
\begin{aligned}
 \Big( z, \psi(z,t)+\la [\psi(z,\t) -\psi(z,t)] ,  t+\la(\t-t)\Big)\in \p E\quad \text{for all $\la\in\R$   and $t,\t$ close to $0$.}
\end{aligned}
\end{equation*}
In particular, choosing $\la = -t/(\t - t)$ for any pair $t\neq \t$, we see that 
\begin{equation*}
 \Big(z, \psi(z,t)-t\frac{\psi(z,\t)-\psi(z,t)}{\t-t}, 0  \Big) \in \p E
 \quad \text{for all $t\neq \t $ with  $t,\t$ close to $0$.}
\end{equation*}
Since for all $z$ close to the origin, the line $\{ (z, u,0):u\in\R\}$ contains only one point  $u=\psi(z,0)
\in \p E$, this implies that  
$\psi(z,t) = m(z) t +q(z)$ for suitable functions $m,q$ and for all $(z,t)$ close to the origin.

\step{Step 2.} We show that  
\begin{enumerate}[nosep,label=(\roman*)]
 \item \label{jojo} $q(\la z)= \la q(z)$ for  all small $  |z|$ and $|\la|\leq 1$;
 \item \label{kaka} $m(\la z)- m(0) =\la[m(z)-m(0)]$, for all small $|z|$ and $|\la|\leq 1$.
\end{enumerate}
 If $q(0)$ would not vanish, the function $q$ would satisfy \ref{jojo}, as $m$.

 To check \ref{jojo}, look at the points  $(z, \psi(z,0), 0)= (z, q(z),0)\in \p E$ for all $z$ close to the origin. Each of these points is aligned with $(0, q(0),0)=(0,0,0)$.
 Then, by Lemma~\ref{linea}, we have 
$  (\la z, \la q(z), 0)\in\p E$
 for all $\la\in\R$. Thus, from Theorem \ref{grafici}, for small  $|\la|$ and $|z|$, 
 we get $q(\la z)=\la q(z)$.

 To show property \ref{kaka} for  $m$, it suffices to consider the pair of   points
$P=(z, m(z)t+q(z), t)$ and $Q= (0, m(0)t,t)$, both in the boundary of $E$.
 The points $P,Q$ are aligned and  can be connected by
 \begin{equation*}
  \gamma(\la)= (\la z, m(0) t + \la[m(z)t+q(z)- m(0) t], t)\quad\text{ with } \la\in\R.
 \end{equation*}
 Thus, we get the identity
 $
  m(\la z )t+q(\la z) = m(0) t + \la[m(z)t+q(z)- m(0) t]
 $,
whose linear part in $t$ gives \ref{kaka}.

Note that, in spite of the fact that Theorem~\ref{grafici} is local, the functions $m$ and $q$ can be  defined globally, by their homogeneity property.

\step{Step 3.}
We show that $q(z)=q(x,y)= ax+by$ is linear and $m(z)$ is constant. 

 To accomplish Step 3, observe that for all $(z,t)$ and $(\z,0)$ close to the origin, the line
\begin{equation*}
 \begin{aligned}
\gamma(\la)&=\Big(  z,\psi(z,t), t\Big) \cdot \Big(\la\z, \la \big\{\psi((z,t)\cdot ( \z,0))-\psi(z,t)\big\},0\Big)
 \\&=\Big(z+\la\z, m(z)t+  q(z)+\la \big\{m(z+\z)(t+2\Im (z\ol \z) ) +q(z+\z)-m(z)t-q(z) \big\} ,
 \\&\qquad \qquad ,
 t+2\la\Im(z\ol \z\Big)
 \end{aligned}
\end{equation*}
satisfies $\gamma(0)\in\p E$ and $\gamma(1)\in\p E$. Then $\gamma(\la)\in \p E$ for all $\la\in[0,1]$, and therefore
\begin{equation}
\label{duepallini} 
\begin{aligned}
 &m(z+\la \z)(t+2\la\Im (z\ol \z))+q(z+\la \z)
\\& =
 m(z)t+q(z)+\la\Big[m(z+\z) (t+2\Im (z\ol \z))+ q(z+\z) -m(z)t-q(z)\Big].
\end{aligned}
\end{equation} 
Equating linear terms in $t$ gives
\begin{equation}
 \label{trepallini}
 m(z+\la\z)=m(z)+\la[m(z+\z)-m(z)],
\end{equation} 
for all $z,\z$ close to the origin and $\la\in[0,1]$, which means that $m$ is affine, i.e.~$m(x,y)=c+kx+hy$ for suitable constants $c,k,h\in\R$.
 To see this fact, fix $\e_0$ small, take $|x|,|y|<\e_0$ and write
\begin{equation*}
\begin{aligned}
 m(x,y)& = m(0,0)+ m(x,y)-m(x,0)+ m(x,0)-m(0,0)
 \\& 
 =m(0,0)+y\frac{[m(x,\e_0)-m(x,0)]}{\e_0}+x\frac{ m(\e_0,0)-m(0,0)}{\e_0}
%  \\&
 =:c+yk(x)+h x\end{aligned}
\end{equation*}
Then, the homogeneity of $z\mapsto m(z)-m(0)$ forces  $k(x)=$ constant.

Next  we look at \eqref{duepallini} with $t=0$  using the form   \eqref{trepallini} of $m(z+\la \z)$.
This gives 
\begin{equation*}
\begin{aligned}
 2\la\Im(z\ol \z)&\big\{ m(z)+\la[m(z+\z)-m(z)]\big\}
 +q(z+\la\z)
 \\&=q(z)+2\la\Im(z\ol \z) m(z+\z) +\la[q(z+\z)-q(z)].
\end{aligned}
\end{equation*}
This implies that
\begin{equation}\label{joaoss} 
  q(z+\la \z)-q(z)=\la \big(q(z+\z)-q(z) \big)
  +2\la(1-\la )\Im(z\ol\z)[m(z+\z)-m(z)].
\end{equation}
Taking two derivatives in $\la$, we conclude
 that
$m(z+\z)-m(z)$ must vanish for all $z,\z$ with $\Im (z\ol \z)\neq 0$ and by continuity of $m$, for all $z,\z\in\R^2$. Then $m$ is constant and  ultimately \eqref{joaoss} tells that $q$ is affine. 

This completes the proof of Proposition  \ref{ugra}. 
\end{proof}

Next we show the following global version  of Proposition \ref{ugra}. 

\begin{proposition}
 \label{ugra_global}  Let the hypotheses of Proposition \ref{ugra} be satisfied and let $u= ax+by+ct$ be the equation of the three-dimensional plane $\Sigma$  coming from Proposition~\ref{ugra}.
 Then~$\Sigma=\p E$.
\end{proposition}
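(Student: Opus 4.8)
The plan is to follow the template of the proof of Proposition~\ref{pappardella}, writing $\pi_\H(x,y,u,t):=(x,y,t)$ for the projection onto the Heisenberg factor and letting $W\ni 0$ and the linear $\psi$ (with $\operatorname{graph}(\psi)\subset\p E$, $\operatorname{epi}(\psi)\subset\Int E$, $\operatorname{ipo}(\psi)\subset\Int E^c$) be those of Proposition~\ref{ugra}, so that $\Sigma=\{u=ax+by+ct\}$ agrees with $\p E$ near the origin. The first observation is that \emph{every horizontal line contained in $\Sigma$ and passing through a point close enough to $0$ lies entirely in $\p E$}: a horizontal line through $P_0=(x_0,y_0,u_0,t_0)\in\Sigma$ with nonzero planar direction $(\xi,\eta)$ is contained in $\Sigma$ exactly when its $u$-component of direction equals $a\xi+b\eta+2c(y_0\xi-\eta x_0)$, and for such a line the points with small parameter have $\pi_\H$-image in $W$, hence lie on $\operatorname{graph}(\psi)\subset\p E$; two such (distinct) points and Lemma~\ref{linea} force the whole line into $\p E$. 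Thus $\p E$ contains, near the origin, a ruling of $\Sigma$ in every planar direction.

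Next I would show $\p E\subset\Sigma$. Suppose $P'=(\bar x,\bar y,\bar u,\bar t)\in\p E\setminus\Sigma$. Let $V\subset Z=\R^3$ be the $2$-dimensional subspace of directions $(\xi,\eta,\omega)$ for which $\xi X+\eta Y+\omega U$ is tangent to $\Sigma$ at $P'$, i.e.\ $\omega=a\xi+b\eta+2c(\bar y\xi-\bar x\eta)$. By Lemma~\ref{unionline} there is a line $\gamma_2\subset\p E$ through $P'$ with direction in $V$; this direction lies in the direction space of the hyperplane $\Sigma$ and $P'\notin\Sigma$, so $\gamma_2$ is parallel to $\Sigma$, disjoint from it, and has nonzero planar direction $(p,q)$. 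Using the first step, pick a ruling $\gamma_1\subset\Sigma\cap\p E$ through a base point near $0$ with planar direction $(\xi,\eta)$ not proportional to $(p,q)$ and chosen (generically among base points near $0$) so that $\pi_\H\gamma_1$ and $\pi_\H\gamma_2$ are skew lines in $\H$. By the geometric fact recalled just before the proof of Proposition~\ref{pappardella}, for all but one $\sigma$ there is $s_\sigma$ with $\pi_\H\gamma_1(\sigma)$ aligned with $\pi_\H\gamma_2(s_\sigma)$; since horizontal alignment in $\H\times\R$ amounts to alignment of the $\pi_\H$-images (the $u$-coordinate being freely chosen along a horizontal line), $\gamma_1(\sigma)$ and $\gamma_2(s_\sigma)$ are aligned in $\H\times\R$. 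Choosing $\sigma$ near $0$, the line $\gamma_3$ joining $\gamma_1(\sigma)$ and $\gamma_2(s_\sigma)$ lies in $\p E$ by Lemma~\ref{linea}, is not contained in $\Sigma$ (as $\gamma_2(s_\sigma)\notin\Sigma$), hence crosses $\Sigma$ transversally at $\gamma_1(\sigma)$, a point near the origin. Then points of $\gamma_3$ just off $\gamma_1(\sigma)$ lie, on one side, in $\operatorname{epi}(\psi)\subset\Int E$ and, on the other, in $\operatorname{ipo}(\psi)\subset\Int E^c$, contradicting $\gamma_3\subset\p E$.

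Finally, to upgrade $\p E\subset\Sigma$ to equality, suppose $\p E\subsetneq\Sigma$. Then $\R^4\setminus\p E$ is open and connected, since any nonempty relatively open piece of the hyperplane $\Sigma$ lying outside $\p E$ joins the two open half-spaces forming $\R^4\setminus\Sigma$; as $\R^4=\Int E\sqcup\p E\sqcup\Int E^c$, the connected set $\R^4\setminus\p E$ lies entirely in $\Int E$ or in $\Int E^c$, and since $(0,0,1,0)\in\Int E$ it lies in $\Int E$. But the horizontal line $s\mapsto(0,0,s,0)$ meets $\Sigma$ only at the origin, so $(0,0,\pm 1,0)\in\R^4\setminus\p E\subset\Int E$; these points are aligned, so, the open segment between two interior points of a convex set being interior, their midpoint $0$ belongs to $\Int E$, contradicting $0\in\p E$. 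Hence $\p E=\Sigma$.

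I expect the heart of the matter to be the middle step: through a hypothetical boundary point off $\Sigma$ one must manufacture a boundary line parallel to $\Sigma$ (this is where the clean choice of the subspace $V$ matters) and then, via the Heisenberg skew-line alignment property, a boundary segment crossing $\Sigma$ transversally near the origin, in contradiction with the local description of $\p E$; the flexibility that makes this possible is exactly the family of rulings of $\Sigma$ near $0$ produced in the first step, and the only genuinely new input over the Heisenberg situation of Proposition~\ref{pappardella} is that alignment in $\H\times\R$ is governed by the $\pi_\H$-projection.
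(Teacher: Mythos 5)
Your proof is correct, but it runs the argument in the opposite order from the paper and with different tools, so it is worth comparing the two. The paper first proves the inclusion $\Sigma\subset\p E$ in full: Step~1 gets the slab $\{(x,y,ax+by+ct,t): |t|<\e\}$ into $\p E$ by aligning $(0,0,ct,t)$ with nearby graph points and invoking Lemma~\ref{linea}; Step~2 propagates this to all of $\Sigma$ by an explicit auxiliary line $\gamma(\mu)$; Step~3 then proves $\p E\subset\Sigma$ by a degeneration argument special to the $U$-direction: a boundary point off $\Sigma$ forces the entire vertical line $\{(\xi,\eta,u,\t):u\in\R\}$ into $\p E$, and taking midpoints of aligned pairs between that line and $\Sigma$ fills the whole horizontal plane $\{t=0\}$ into $\p E$, contradicting Lemma~\ref{giroconto}. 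You instead prove $\p E\subset\Sigma$ first, by transplanting the skew-line machinery of Proposition~\ref{pappardella}: your choice of the subspace $V$ in Lemma~\ref{unionline} as the $\Sigma$-parallel horizontal directions at the offending point is a genuine improvement in cleanliness, since it makes the boundary line $\gamma_2$ automatically disjoint from $\Sigma$ and hence the connecting line $\gamma_3$ automatically transversal; you then obtain equality from the connectedness of $\R^4\setminus\p E$, exactly as in Step~3 of Proposition~\ref{pappardella}. What each approach buys: yours is uniform with the $(pX+qY)$ case and avoids the paper's explicit global curve computation in its Step~2, at the price of needing the skew/parallel-with-distinct-projection alignment property, a small genericity argument to arrange skewness, and the observation (which you correctly isolate) that alignment in $\H\times\R$ is governed by the $\H$-projection; the paper's route is more computational but entirely constructive, needing neither genericity nor the alignment dichotomy, and its final contradiction via Lemma~\ref{giroconto} exploits the product structure in a way that does not generalize but is very short here. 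All the individual steps of your argument check out: the rulings of $\Sigma$ near $0$ do lie in $\p E$, any nonzero vector of your $V$ has nonzero planar part, and the midpoint argument at the end is legitimate since two interior points aligned along the $U$-line have an interior open segment between them.
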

\begin{proof}
Let $W=\{(x,y,t)\in\R^3: |x|,|y|,|t|<\e\}$ be a neighborood of 
the origin such that the statement of Proposition~\ref{ugra} holds true. 
\step{Step 1. }  We first show that $\p E\supset \Sigma\cap\{(x,y,ax+by+ct,t):|t|<\e$ and $(x,y)\in\R^2\}$.

To show this claim, given $t\in\left]-\e,\e\right[$ it suffices to look at the points $(0,0, ct,t)$ and $(x,y,ax+by+ct,t)$, where $|x|,|y|<\e$. Both points belong to $\p E$ and they  are aligned. Then,  
$
 \gamma(s)=(sx,sy, s(ax+by)+ct,t)\in\p E$, for all $s\in \R$. 
 Step 1 follows immediately.

\step{Step 2. }  We show that $\Sigma\subset\p E$, or in other words, for all $(\xi,\eta,\t)=(\z,\t)\in \R^3$  we have
 \begin{equation}\label{passodos} 
(\xi,\eta,a\xi+b\eta+c\tau,\t)\in \p E .
\end{equation} 

It suffices to show this statement for $\z\neq 0$. Let $\t\in\R$ and  consider the curve 
\begin{equation*}
\begin{aligned}
\gamma(\mu): & = \Big(\xi-\frac{\t\eta}{2\abs{\z}^2},
\eta+\frac{\t\xi}{2\abs{\z}^2}, a\Big(  \xi-\frac{\t\eta}{2\abs{\z}^2}\Big)
+b \Big( \eta+\frac{\t\xi}{2\abs{\z}^2}\Big), 0
\Big)
\\&\qquad \qquad 
\cdot
\Big(\mu\frac{\tau\eta}{2|\z|^2} , -\mu \frac{\t\xi}{2\abs{\z}^2}, \mu \Big[
a\frac{\tau\eta}{2|\z|^2} -  b \frac{\t\xi}{2\abs{\z}^2}+ c\tau
\Big],0\Big)
\\&= 
\Big(\xi- (1-\mu) \frac{\t \eta}{2|\z|^2},
\eta+(1-\mu)\frac{\t\xi}{2|\z|^2}, 
\\&
\qquad \qquad 
,a\Big[ \xi- (1-\mu) \frac{\t \eta}{2|\z|^2}\Big]
+ b\Big[\eta+(1-\mu)\frac{\t\xi}{2|\z|^2}\Big] 
+c\mu\tau ,\mu\tau\Big).
\end{aligned}
\end{equation*}
We have $\gamma(0)\in \p E$. Furthermore, if $\mu>0$ is so small that $\mu\t<\e $, then by Step 1 we have $\gamma(\mu)\in\p E$. Therefore,   $\gamma(\mu)\in\p E$ for all $\mu\in\R$. Taking 
$\mu=1$, we see that~\eqref{passodos} holds.

\step{Step 3.} We show that $\Sigma: = \{(x,y, ax+by+ct, t): (x,y,t)\in \R^3\}=\p E$. 

By Step 2, we know that $\Sigma\subset \p E$. Assume by contradiction that there is  $(\xi, \eta,  u,\t)\in \R^4\setminus \Sigma$ belonging to $\p E$. Then the whole line $\{(\xi, \eta, u,   \tau   ):u\in\R\}$ is contained in $\p E$. Up to a translation, we may assume that 
\begin{equation*}
 \Sigma=\{(x,y,\a x+\b y+\gamma t , t): (x,y,t)\in\R^3\}\subset\p E\quad\text{and } \ell=\{(0,0,u,0):u\in\R\}\subset \p E.
\end{equation*}
In particular, all points of the form  $P =(x,y, \a x+\b y ,0)$ and $Q= (0,0,u,0)$ belong to $\p E$ for all $x,y,u$. Since $P$ and $Q $ are aligned, we conclude that the intermediate point $(\frac{x}{2}
, \frac{y}{2}, \frac{\a x+\b y  +u}{2}, 0)$ belongs to $\p E$ for arbitrary $x,y,u\in\R$.  Then the plane  $t=0$ is fully contained in $\p E$ and Lemma \ref{giroconto} gives a contradiction. 
\end{proof}

\footnotesize
\def\cprime{$'$} \def\cprime{$'$}
\providecommand{\bysame}{\leavevmode\hbox to3em{\hrulefill}\thinspace}
\providecommand{\MR}{\relax\ifhmode\unskip\space\fi MR }
% \MRhref is called by the amsart/book/proc definition of \MR.
\providecommand{\MRhref}[2]{%
  \href{http://www.ams.org/mathscinet-getitem?mr=#1}{#2}
}
\providecommand{\href}[2]{#2}

%%\bibliography{/home/daniele/Dropbox/Matematica/Bibtex/database_17_01}
%%\bibliographystyle{amsalpha}
\normalsize
 
 \end{document}